\documentclass[12pt]{amsart}
\usepackage[a4paper,margin=2.5cm]{geometry}
\usepackage[T1]{fontenc}
\usepackage[english]{babel}
\usepackage{microtype}
\usepackage{amsmath,amssymb,amsthm,amscd,mathtools}
\usepackage{mathrsfs}
\usepackage{enumitem}
\usepackage{xcolor}
\usepackage{xcolor}
\definecolor{citeblue}{RGB}{0,70,140}
\usepackage[
  colorlinks=true,
  linkcolor=citeblue,
  citecolor=citeblue,
  urlcolor=citeblue
]{hyperref}
\usepackage{etoolbox}

\makeatletter
\patchcmd{\@setaddresses}
  {\par\addvspace\bigskipamount\indent}
  {\par\addvspace\bigskipamount\noindent}
  {}{}
\makeatother
\makeatletter
\def\l@subsection{\@tocline{2}{0pt}{3pc}{6pc}{}}
\makeatother
\makeatletter
\renewcommand\section{\@startsection{section}{1}%
  \z@{.7\linespacing\@plus\linespacing}{.5\linespacing}%
  {\normalfont\bfseries\centering}}

\renewcommand\subsection{\@startsection{subsection}{2}%
  \z@{.5\linespacing\@plus.7\linespacing}{-.5em}%
  {\normalfont\bfseries}}
\makeatother

\numberwithin{equation}{section}

\newtheorem{theorem}{Theorem}[section]
\newtheorem{prop}[theorem]{Proposition}
\newtheorem{lemma}[theorem]{Lemma}
\newtheorem{corollary}[theorem]{Corollary}
\newtheorem{definition}[theorem]{Definition}
\newtheorem*{remark}{Remark}
\newtheorem{example}[theorem]{Example}

\DeclareMathOperator{\Hom}{Hom}

\DeclareMathOperator{\Tors}{Tors}
\DeclareMathOperator{\Cob}{Cob}

\title[Equivalence of toral Chern--Simons and Reshetikhin--Turaev theories]{Equivalence of\\ toral Chern--Simons and Reshetikhin--Turaev theories}
\author{Daniel Galviz}
\address{\noindent  YAU MATHEMATICAL SCIENCES CENTER AND DEPARTMENT OF MATHEMATICS, TSINGHUA
UNIVERSITY, BEIJING, CHINA.}
\date{}

\begin{document}

\begin{abstract}
\noindent We prove a natural isomorphism between toral Chern--Simons theory with gauge group $\mathbb T=\mathfrak t/\Lambda\cong U(1)^n$ and the Reshetikhin--Turaev theory associated with the finite quadratic module determined by an even, integral, nondegenerate symmetric bilinear form $K:\Lambda\times\Lambda\to\mathbb Z.$ More precisely, let $G_K=\Lambda^*/K\Lambda$ be the discriminant group of \(K\), equipped with its induced quadratic form \(q_K\), and let \(\mathcal C(G_K,q_K)\) be the corresponding pointed modular category. Using the geometric quantization formulation of toral Chern--Simons theory, we show that the resulting TQFT is naturally isomorphic to the Reshetikhin--Turaev TQFT determined by \(\mathcal C(G_K,q_K)\). The equivalence is established at the level of closed 3--manifold invariants, bordism operators for manifolds with boundary, and the extended \((2+1)\)-dimensional structure, yielding a natural isomorphism of extended TQFTs.
\vspace{0.5cm}
\medskip

\noindent\textbf{Main Theorem.}
Let \(\mathbb T=\mathfrak t/\Lambda\cong U(1)^n\) be a compact torus, let
\(K:\Lambda\times\Lambda\to\mathbb Z\) be an even, integral, nondegenerate symmetric bilinear form, let \((G_K,q_K)\) be the associated finite quadratic module, and let
\(\mathcal C(G_K,q_K)\) denote the corresponding pointed modular category.
Then the Reshetikhin--Turaev theory associated with \(\mathcal C(G_K,q_K)\) is naturally isomorphic, as an extended \((2+1)\)-dimensional TQFT, to the toral Chern--Simons theory with gauge group \(\mathbb T\) and level \(K\).
Equivalently, the two theories define naturally isomorphic symmetric monoidal functors
\[
\Cob^{\mathrm{ext}}_{2+1}\longrightarrow \mathrm{Vect}_{\mathbb C}.
\]
\end{abstract}
\maketitle
{\scriptsize
\setlength{\parskip}{0pt}
\hypersetup{linkcolor=black}
\tableofcontents
}
\newpage
\section{Introduction}
The equivalence between Abelian Chern--Simons theory \cite{Witten:1988} and Reshetikhin--Turaev theory \cite{Reshetikhin:1991} in the \(U(1)\) case was established in~\cite{Galviz1}, where the \(U(1)\) Chern--Simons TQFT at even level \(k\) was shown to be naturally isomorphic to the Reshetikhin--Turaev theory associated with the finite quadratic module \((\mathbb Z_k,q_k)\). It is therefore natural to ask whether the same relationship can be established for arbitrary compact tori. The construction of the toral Chern--Simons extended TQFT in~\cite{Galviz2,Galviz2.5} provides the geometric framework needed to formulate and prove such a higher-rank analogue.

Let $\mathbb T=\mathfrak t/\Lambda \cong U(1)^n$ be a compact torus, and let $K:\Lambda\times\Lambda\to\mathbb Z$ be an even, integral, nondegenerate symmetric bilinear form. This datum has both a geometric and an algebraic meaning. On the geometric side, \(K\) determines the toral Chern--Simons theory studied in~\cite{Galviz2}: for each closed oriented surface \(\Sigma\), one obtains a compact symplectic moduli space of flat \(\mathbb T\)-connections, whose geometric quantization in real polarization recovers the finite group $G_K=\Lambda^*/K\Lambda$ through its Bohr--Sommerfeld data, and hence produces finite-dimensional state spaces together with bordism operators and an extended functorial structure.

On the algebraic side, \(K\) determines the discriminant finite quadratic module $G_K=\Lambda^*/K\Lambda$ equipped with its induced quadratic form \(q_K\), and hence a pointed modular category $\mathcal C(G_K,q_K).$ Applying the Reshetikhin--Turaev construction to this category yields an extended \((2+1)\)-dimensional TQFT \cite{Reshetikhin:1991,Turaev1994}.

The main result of this paper is that these two theories are naturally isomorphic. More precisely, we prove that the toral Chern--Simons extended TQFT associated with \((\mathbb T,K)\) is naturally isomorphic to the  Reshetikhin--Turaev theory associated with the pointed modular category \(\mathcal C(G_K,q_K)\). The comparison is established both for closed \(3\)--manifold invariants and for bordisms with boundary. Thus the discriminant quadratic module \((G_K,q_K)\) provides the precise algebraic datum that matches the geometric toral Chern--Simons theory.

On the Chern--Simons side, one must use the geometric quantization formalism of~\cite{Galviz2}, including the torsion decomposition of the moduli space, the Reidemeister-torsion half-density, and the \(K\)-twisted Maslov correction in the extended theory. On the Reshetikhin--Turaev side, the relevant algebraic object is no longer the cyclic quadratic module \((\mathbb Z_k,q_k)\), but the general discriminant form \((G_K,q_K)\). 

The paper is organized as follows. In Section~\ref{sec:toral-CS}, we recall the toral Chern--Simons extended TQFT of~\cite{Galviz2} and isolate the geometric ingredients needed later: the boundary state spaces, the bordism vectors, the closed partition function, and the extended functoriality. In Section~\ref{section2}, we describe the corresponding Reshetikhin--Turaev theory associated with the finite quadratic module \((G_K,q_K)\). In Section~\ref{section4}, we prove the closed and boundary equivalence theorems and deduce the equivalence of the two extended \((2+1)\)-dimensional TQFTs.

\textbf{Acknowledgements.} I am grateful to Nicolai Reshetikhin for many helpful conversations and suggestions. I would also like to thank Dan Freed, Zixuan Feng, Stavros Garoufalidis, Zhengwei Liu, and Yilong Wang for helpful comments and suggestions on the manuscript.

\newpage

\section{\texorpdfstring{Toral \(U(1)^n\) Chern--Simons Extended TQFT}{Toral U(1)n Chern--Simons Extended TQFT}}
\label{sec:toral-CS}
In this section, we recall the toral Chern–Simons theory in the form needed for the equivalence theorem, following \cite{Galviz2}. Our purpose is not to reproduce the full construction, but to isolate the precise geometric data that enter the comparison with the Reshetikhin–Turaev theory: the boundary Hilbert spaces, the canonical bordism vectors, the closed partition function, and the extended functorial structure. A complementary treatment, based on a rigorous functional-integral construction of toral Chern–Simons theory, was developed in \cite{Galviz2.5}.

Let $\mathbb T=\mathfrak t/\Lambda \cong U(1)^n$
be a compact torus, and let $K:\Lambda\times \Lambda\to\mathbb Z$ be an even, integral,
nondegenerate symmetric bilinear form.  The finite discriminant group
\[
G_K:=\Lambda^*/K\Lambda,
\qquad
\Lambda^*:=\Hom(\Lambda,\mathbb Z),
\]
will appear naturally on the geometric side and will later reappear as the label set for the
pointed modular category on the Reshetikhin--Turaev side.

More concretely, Section \ref{sec:boundary-phase-space} reviews how the symplectic boundary moduli space is quantized in real polarization, producing finite-dimensional state spaces together with canonical BKS comparison operators. Section \ref{toral-tqft} then passes from a closed boundary surface to a compact oriented $3$--manifold $X,$ recalling how the torsion components of the moduli space of flat fields determine translated Lagrangian leaves, canonical sections and half-densities, and hence the normalized bordism vector $Z^{\mathrm{CS}}_{\mathbb T,K}(X)$. We conclude with the cylinder, gluing, and extended-TQFT statements that will be used in Section \ref{section4}.

\subsection{\texorpdfstring{Boundary phase space and real-polarized quantization}{Boundary phase space and real-polarized quantization}}\label{sec:boundary-phase-space}
We begin with the boundary theory, since these are the vector spaces that will later be compared with the Reshetikhin–Turaev state spaces. For a closed oriented surface $\Sigma$, the classical boundary phase space of toral Chern–Simons theory is
\begin{equation}\label{eq:toral-phase-space}
\mathcal M_\Sigma(\mathbb T)
:=
H^1(\Sigma;\mathfrak t)\big/H^1(\Sigma;\Lambda).
\end{equation}
Since \(\mathbb T\) is Abelian, a flat \(\mathbb T\)-connection is determined up to gauge by its
holonomy class, so \(\mathcal M_\Sigma(\mathbb T)\) is precisely the moduli space of flat
boundary fields. The level form \(K\) combines with cup product to define
\begin{equation}\label{eq:toral-symplectic-form}
\omega_{\Sigma,K}([\alpha],[\beta])
=
\int_\Sigma K(\alpha\wedge \beta),
\qquad
[\alpha],[\beta]\in H^1(\Sigma;\mathfrak t).
\end{equation}

\begin{prop}\cite[Proposition 2.1]{Galviz2}\label{prop:toral-phase-space}
For every closed oriented surface \(\Sigma\), the pair $(\mathcal M_\Sigma(\mathbb T),\omega_{\Sigma,K})$ is a compact symplectic torus.
\end{prop}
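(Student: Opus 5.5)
The plan is to reduce everything to linear algebra on the finite-dimensional real vector space $V:=H^1(\Sigma;\mathfrak t)\cong H^1(\Sigma;\mathbb R)\otimes_{\mathbb Z}\Lambda$, which has dimension $2gn$ if $g$ is the genus of $\Sigma$. First, compactness: by the universal coefficient theorem, and since $H_0(\Sigma;\mathbb Z)$ is free, we have $H^1(\Sigma;\Lambda)\cong\Hom(H_1(\Sigma;\mathbb Z),\Lambda)$, a free abelian group of rank $2gn$. The natural map $H^1(\Sigma;\Lambda)\to H^1(\Sigma;\mathfrak t)$ is induced by $\Lambda\hookrightarrow\mathfrak t$, and $\Lambda\otimes\mathbb R=\mathfrak t$. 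Hence this map identifies $H^1(\Sigma;\Lambda)$ with a full-rank lattice $L\subset V$. Concretely, choose a symplectic basis $a_1,\dots,a_g,b_1,\dots,b_g$ of $H_1(\Sigma;\mathbb Z)$; then $V\cong\mathfrak t^{2g}$ via holonomies, and $L\cong\Lambda^{2g}$. It follows that $\mathcal M_\Sigma(\mathbb T)=V/L\cong\mathbb T^{2g}$ is a compact torus of dimension $2gn$, which is also the moduli space of flat connections described in \eqref{eq:toral-phase-space}.

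Second, I will check that $\omega_{\Sigma,K}$ is a well-defined constant-coefficient $2$-form. Extend $K$ $\mathbb R$-bilinearly to $\mathfrak t$; this is still symmetric and nondegenerate. The pairing in \eqref{eq:toral-symplectic-form} is the composite of the cup product $H^1\times H^1\to H^2(\Sigma;\mathfrak t\otimes\mathfrak t)$, the map $K$, and evaluation on $[\Sigma]$, so it depends only on cohomology classes. It is bilinear, and it is skew because the cup product of degree-one classes is graded-commutative while $K$ is symmetric. Being translation invariant on $V$, it is closed and descends to the quotient $V/L$.

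Third, and this is the only substantive step, I need nondegeneracy. Write $V=H^1(\Sigma;\mathbb R)\otimes\mathfrak t$. Then $\omega_{\Sigma,K}$ is the tensor product of the intersection form $\omega_\Sigma(x,y)=\int_\Sigma x\cup y$ with $K$. The form $\omega_\Sigma$ is nondegenerate by Poincaré duality, and $K$ is nondegenerate by hypothesis. The tensor product of two nondegenerate bilinear forms is nondegenerate: its Gram matrix in product bases is the Kronecker product $J\otimes K$, and $\det(J\otimes K)=\det(J)^{n}\det(K)^{2g}\neq0$. In the symplectic basis this gives explicitly
\[
\omega_{\Sigma,K}=\sum_{i=1}^g K(d\theta_{a_i}\wedge d\theta_{b_i}),
\]
which is the form I will write down.

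Finally, I will note an optional integrality check, relevant later for prequantization. Because $K$ is integral and the cup product of integral classes is integral, $\omega_{\Sigma,K}(L,L)\subset\mathbb Z$, so $[\omega_{\Sigma,K}]$ is an integral class. The main point to handle carefully is the bookkeeping of the identification $H^1(\Sigma;\mathfrak t)\cong H^1(\Sigma;\mathbb R)\otimes\mathfrak t$, which is needed to justify that the Gram matrix is a Kronecker product. I would settle this by working in the explicit basis above rather than abstractly. The degenerate case $g=0$ gives a point, which I will treat as trivially a compact symplectic torus of dimension zero.
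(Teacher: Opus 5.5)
Your proof is correct and is the standard argument behind this result. The paper itself gives no proof here and simply defers to \cite[Proposition~2.1]{Galviz2}. Universal coefficients make $H^1(\Sigma;\Lambda)$ a full-rank lattice in $H^1(\Sigma;\mathfrak t)$, and $\omega_{\Sigma,K}$ is the translation-invariant form given by the Poincar\'e-duality intersection form tensored with $K$. It is therefore closed, skew, and nondegenerate, with Gram determinant $\det(J)^n\det(K)^{2g}\neq 0$. The one point to add is that for disconnected $\Sigma$ both the quotient and the form split as a product over components, so your connected-genus bookkeeping extends immediately.
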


Thus $\mathcal M_\Sigma(\mathbb T)$ is the toral analogue of the classical boundary phase space in rank--one Abelian Chern–Simons theory. The first step toward quantization is therefore to understand which boundary directions are selected by a bulk $3$--manifold. Now let \(X\) be a compact oriented \(3\)--manifold with boundary \(\partial X=\Sigma\).
The restriction map
\[
r_X:H^1(X;\mathfrak t)\longrightarrow H^1(\Sigma;\mathfrak t)
\]
determines the subspace
\begin{equation}\label{eq:LX-cohomological}
L_X:=\operatorname{Im}(r_X)\subset H^1(\Sigma;\mathfrak t).
\end{equation}
This is the space of boundary classes that extend into the bulk.

\begin{prop}\cite[Proposition~2.2]{Galviz2}\label{prop:LX-Lagrangian}
The subspace \(L_X\subset H^1(\Sigma;\mathfrak t)\) is Lagrangian with respect to
\(\omega_{\Sigma,K}\).
\end{prop}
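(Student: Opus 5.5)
The plan is to reduce the statement to the classical fact that the image of $H^1(X;\mathbb R)\to H^1(\Sigma;\mathbb R)$ is Lagrangian for the intersection pairing, and then check that tensoring with $\mathfrak t$ and inserting $K$ does not spoil this. Choose a $\mathbb Z$-basis $e_1,\dots,e_n$ of $\Lambda$ and identify $\mathfrak t=\Lambda\otimes\mathbb R\cong\mathbb R^n$. Then
\[
H^1(X;\mathfrak t)\cong H^1(X;\mathbb R)\otimes\mathfrak t,\qquad H^1(\Sigma;\mathfrak t)\cong H^1(\Sigma;\mathbb R)\otimes\mathfrak t .
\]
The restriction map becomes $r_X=r\otimes\mathrm{id}_{\mathfrak t}$, where $r:H^1(X;\mathbb R)\to H^1(\Sigma;\mathbb R)$ is the scalar restriction. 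Hence $L_X=L\otimes\mathfrak t$ with $L:=\operatorname{Im}(r)$. On decomposable classes the form \eqref{eq:toral-symplectic-form} reads
\[
\omega_{\Sigma,K}(a\otimes u,\,b\otimes v)=K_{\mathbb R}(u,v)\int_\Sigma a\cup b ,
\]
where $K_{\mathbb R}$ is the $\mathbb R$-bilinear extension of $K$ to $\mathfrak t$. So $\omega_{\Sigma,K}=\omega_\Sigma\otimes K_{\mathbb R}$, with $\omega_\Sigma$ the cup-product pairing on $H^1(\Sigma;\mathbb R)$.

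\textbf{Isotropy.} For $a=r(\tilde a)$ and $b=r(\tilde b)$ with $\tilde a,\tilde b\in H^1(X;\mathbb R)$, we have $a\cup b=r(\tilde a\cup\tilde b)$. Its integral over $\Sigma=\partial X$ vanishes by Stokes' theorem for closed forms, or equivalently because the restriction of a class in $H^2(X)$ pairs trivially with the fundamental class $[\partial X]=\partial[X,\partial X]$. Thus $L$ is isotropic for $\omega_\Sigma$. By bilinearity $L\otimes\mathfrak t$ is isotropic for $\omega_\Sigma\otimes K_{\mathbb R}$, whatever $K$ is.

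\textbf{Half dimension.} This is the standard "half lives, half dies" argument, and it is the main step. Consider the long exact sequence of the pair $(X,\Sigma)$ with real coefficients,
\[
H^1(X)\xrightarrow{r}H^1(\Sigma)\xrightarrow{\delta}H^2(X,\Sigma).
\]
By Poincaré--Lefschetz duality, $H^2(X,\Sigma)\cong H^1(X)^*$, and under this identification $\delta$ is dual to $r$ up to sign, with $H^1(\Sigma)$ self-dual via $\omega_\Sigma$. Exactness then gives $\dim L=\dim\operatorname{rank}\delta^{\vee}$-complement, i.e.
\[
\dim L=\dim H^1(\Sigma)-\dim L ,
\]
so $\dim L=\tfrac12\dim H^1(\Sigma;\mathbb R)$. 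Equivalently, one shows $L^{\perp_{\omega_\Sigma}}=\ker\delta=L$. If $\Sigma$ is disconnected the same argument applies verbatim, since the sequence is taken for the whole boundary. Tensoring with $\mathfrak t$ gives
\[
\dim L_X=n\dim L=\tfrac12\dim H^1(\Sigma;\mathfrak t).
\]

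\textbf{Nondegeneracy and conclusion.} Nondegeneracy of $\omega_{\Sigma,K}$ holds because a tensor product of nondegenerate forms is nondegenerate, and $K_{\mathbb R}$ is nondegenerate since $K$ is nondegenerate over $\mathbb Z$, hence $\det K\neq0$. This is also the content of Proposition~\ref{prop:toral-phase-space}. An isotropic subspace of half dimension in a symplectic vector space is Lagrangian, which completes the proof. The only real obstacle is the duality identification of $\delta$ with the adjoint of $r$. I would verify it with the sign convention $\langle\delta x,y\rangle=\pm\int_\Sigma x\cup r(y)$, which follows from the naturality of cap product with $[X,\partial X]$.
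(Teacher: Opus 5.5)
Your argument is correct and follows essentially the route the paper relies on. The paper itself only cites \cite{Galviz2}, but it records the same reduction $L_X=L_X^{\mathbb R}\otimes\mathfrak t$ in Lemma~\ref{lem:lambdaX-LX}: isotropy by Stokes, half-dimensionality by Poincar\'e--Lefschetz duality applied to $H^1(X)\to H^1(\Sigma)\to H^2(X,\Sigma)$, and then $\omega_{\Sigma,K}=\omega_\Sigma\otimes K_{\mathbb R}$ with $K_{\mathbb R}$ nondegenerate. The one garbled line in the half-dimension step should read $\dim L=\dim\ker\delta=\dim H^1(\Sigma;\mathbb R)-\operatorname{rank}\delta$, with $\operatorname{rank}\delta=\operatorname{rank} r=\dim L$. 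More directly, $\ker\delta=L^{\perp_{\omega_\Sigma}}$ together with exactness gives $L=L^{\perp_{\omega_\Sigma}}$ at once.
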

So the bulk manifold $X$ determines a Lagrangian boundary condition: among all infinitesimal boundary fields, it selects exactly those that arise by restriction from the interior. It is often convenient to express the same datum in homological language. Define
\begin{equation}\label{eq:lambdaX-homological}
\lambda_X:=\ker\!\bigl(H_1(\Sigma;\mathbb R)\to H_1(X;\mathbb R)\bigr).
\end{equation}
Then \(\lambda_X\) is Lagrangian for the surface intersection form. Under Poincar\'e
duality it corresponds to
\begin{equation}\label{eq:LX-real}
L_X^{\mathbb R}:=\operatorname{Im}\!\bigl(H^1(X;\mathbb R)\to H^1(\Sigma;\mathbb R)\bigr),
\qquad
L_X=L_X^{\mathbb R}\otimes \mathfrak t.
\end{equation}

\begin{lemma}\cite[Lemma~2.3]{Galviz2}\label{lem:lambdaX-LX}
Under Poincar\'e duality, the Lagrangian subspace \(\lambda_X\subset H_1(\Sigma;\mathbb R)\)
corresponds to \(L_X^{\mathbb R}\subset H^1(\Sigma;\mathbb R)\), and hence
\[
L_X=L_X^{\mathbb R}\otimes \mathfrak t.
\]
\end{lemma}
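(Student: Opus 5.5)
The plan is to compare the two subspaces through the long exact sequences of the pair $(X,\Sigma)$ in homology and cohomology, linked by Lefschetz and Poincar\'e duality. The tensor statement $L_X=L_X^{\mathbb R}\otimes\mathfrak t$ is a separate, formal step.

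\textbf{Tensor statement.} Cohomology with coefficients in the real vector space $\mathfrak t$ satisfies $H^1(Y;\mathfrak t)\cong H^1(Y;\mathbb R)\otimes_{\mathbb R}\mathfrak t$ naturally in $Y$, by universal coefficients over a field. Restriction along $\Sigma\hookrightarrow X$ is therefore $r_X^{\mathbb R}\otimes\mathrm{id}_{\mathfrak t}$. Tensoring with $\mathfrak t$ over $\mathbb R$ is exact, so it preserves images, and
\[
L_X=\operatorname{Im}(r_X)=\operatorname{Im}(r_X^{\mathbb R})\otimes\mathfrak t=L_X^{\mathbb R}\otimes\mathfrak t .
\]

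\textbf{Duality statement.} Let $PD_\Sigma:H^1(\Sigma;\mathbb R)\to H_1(\Sigma;\mathbb R)$ denote cap product with $[\Sigma]$. The claim is $PD_\Sigma(L_X^{\mathbb R})=\lambda_X$. I would use the commutative ladder, up to sign, between
\[
H^1(X)\to H^1(\Sigma)\xrightarrow{\ \delta\ } H^2(X,\Sigma)
\quad\text{and}\quad
H_2(X,\Sigma)\xrightarrow{\ \partial\ } H_1(\Sigma)\to H_1(X),
\]
with vertical maps given by Lefschetz duality $\cap[X,\partial X]$ and by $PD_\Sigma$. This is the standard compatibility of the boundary fundamental class, $\partial[X,\Sigma]=[\Sigma]$. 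Both rows are exact, and $\operatorname{Im}(H^1(X)\to H^1(\Sigma))=\ker\delta$, while on the homology side $\ker(H_1(\Sigma)\to H_1(X))=\operatorname{Im}\partial$. Commutativity of the square relating $H^1(X)\to H^1(\Sigma)$ with $H_2(X,\Sigma)\to H_1(\Sigma)$ gives $PD_\Sigma(L_X^{\mathbb R})=\operatorname{Im}\partial=\lambda_X$ directly.

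\textbf{Lagrangian property of $\lambda_X$.} This follows by transporting the Lagrangian property of $L_X^{\mathbb R}$. Proposition~\ref{prop:LX-Lagrangian} with $\mathfrak t=\mathbb R$ and $K=1$ (the cup form) gives it. Alternatively, note that $PD_\Sigma$ carries the cup pairing to the intersection pairing. The remaining ingredient is the dimension count $\dim L_X^{\mathbb R}=\tfrac12\dim H^1(\Sigma)$, which the cited results give.

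The main obstacle is only bookkeeping. One must verify that the relevant square of the duality ladder commutes, possibly up to a sign $(-1)^{k}$, which does not affect images. One must also work componentwise if $X$ or $\Sigma$ is disconnected. I would cite the standard naturality of cap products with respect to the boundary map (e.g.\ as in Hatcher or Bredon) rather than reprove it.
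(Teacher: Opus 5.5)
Your proof is correct. The paper gives no argument of its own here (it quotes the lemma from \cite{Galviz2}), and your route is the standard one: the Lefschetz--Poincar\'e duality ladder for $(X,\Sigma)$ gives $PD_\Sigma(L_X^{\mathbb R})=\operatorname{Im}\partial=\lambda_X$ with no Lagrangian input needed, and universal coefficients give the $\mathfrak t$-statement.

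The only slip is in your optional side remark. Proposition~\ref{prop:LX-Lagrangian} is stated under the standing hypothesis that $K$ is even, so $K=1$ is not covered. Instead, use $\Lambda=\mathbb Z$, $K=[2]$, since Lagrangians for $2\omega$ and $\omega$ coincide. Alternatively, deduce that $L_X^{\mathbb R}$ is Lagrangian from Proposition~\ref{prop:LX-Lagrangian} for the given $K$, using $L_X=L_X^{\mathbb R}\otimes\mathfrak t$ and the nondegeneracy of $K$.
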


This identification is the geometric bridge between the Chern--Simons boundary
polarization and the Lagrangian data used later in the extended bordism formalism. The prequantum geometry is provided by the canonical toral Chern--Simons line bundle.
The toral Chern--Simons construction associates to each boundary field a Hermitian line
obtained by comparing fillings via the \(4\)-dimensional Chern--Weil phase, and these
lines descend from the flat locus to a Hermitian line bundle
\begin{equation}\label{eq:toral-prequantum-line}
\mathcal L_{\Sigma,K}\longrightarrow \mathcal M_\Sigma(\mathbb T)
\end{equation}
with unitary connection \(\nabla_{\Sigma,K}\) satisfying
\begin{equation}\label{eq:toral-prequantum-curvature}
F_{\nabla_{\Sigma,K}}=-2\pi i\,\omega_{\Sigma,K}.
\end{equation}

\begin{prop}\cite[Proposition~2.12]{Galviz2}\label{prop:prequantum-line}
The Hermitian line bundle \(\mathcal L_{\Sigma,K}\to \mathcal M_\Sigma(\mathbb T)\)
with connection \(\nabla_{\Sigma,K}\) is the canonical prequantum line bundle of
\((\mathcal M_\Sigma(\mathbb T),\omega_{\Sigma,K})\).
\end{prop}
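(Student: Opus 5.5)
To prove the proposition, I will show three things: $\mathcal L_{\Sigma,K}$ has curvature $-2\pi i\,\omega_{\Sigma,K}$ (given by \eqref{eq:toral-prequantum-curvature}), it is well defined on the quotient torus $\mathcal M_\Sigma(\mathbb T)$, and it is unique up to canonical isomorphism among prequantum bundles once the natural extra data is fixed.

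I will work on the universal cover $V:=H^1(\Sigma;\mathfrak t)$, where $\omega_{\Sigma,K}$ is a constant bilinear form. First I check integrality. The Chern--Weil phase gives a trivialization of the pullback of $\mathcal L_{\Sigma,K}$ to $V$. In this trivialization the connection is $d-\pi i\,\omega_{\Sigma,K}(x,\cdot)$. The lattice $H^1(\Sigma;\Lambda)$ acts through a cocycle
\[
e_\gamma(x)=\epsilon(\gamma)\exp\bigl(\pi i\,\omega_{\Sigma,K}(\gamma,x)\bigr),
\]
where $\epsilon$ is a sign function. Since $K$ is even and integral, the cup product pairing $\omega_{\Sigma,K}$ takes integer values on $H^1(\Sigma;\Lambda)$, and the mod-$2$ refinement $\tfrac12 K(\gamma,\gamma)$ is integral. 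This lets me take $\epsilon\equiv 1$ canonically, so no spin structure is needed. The cocycle condition $e_{\gamma+\gamma'}(x)=e_\gamma(x+\gamma')e_{\gamma'}(x)$ then reduces to $\exp(\pi i\,\omega(\gamma,\gamma'))=\pm1$ being compatible, which holds because $\omega(\gamma,\gamma')\in\mathbb Z$. Combined with the antisymmetry and bilinearity of $\omega$, this verifies descent. The curvature of the descended connection is then $-2\pi i\,\omega_{\Sigma,K}$ by direct differentiation, matching \eqref{eq:toral-prequantum-curvature}. Hence $c_1(\mathcal L_{\Sigma,K})=[\omega_{\Sigma,K}]$, which is integral, and $\mathcal L_{\Sigma,K}$ is a prequantum line bundle by Proposition~\ref{prop:toral-phase-space}.

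Next comes canonicity. The prequantum bundles with this curvature on the torus $\mathcal M_\Sigma(\mathbb T)$ form a torsor over flat line bundles, namely over $\Hom(H_1(\mathcal M_\Sigma),U(1))\cong H^1(\Sigma;\Lambda)^\vee\otimes U(1)$. So I must show that the Chern--Weil construction singles out one of them naturally. I will argue this by naturality under orientation-preserving diffeomorphisms of $\Sigma$, and more strongly under the full mapping class group and bordisms. The Chern--Weil line at the trivial connection is canonically $\mathbb C$, so the bundle is fixed at the origin. The holonomy around the lattice loop $t\mapsto t\gamma$ can be computed as the Chern--Simons invariant of the mapping torus of $\Sigma\times S^1$ with the flat connection of holonomy $\gamma$. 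This invariant equals $\exp(\pi i K(\gamma,\gamma)\cdot 0)=1$, because the Chern--Weil integrand vanishes on the product filling $\Sigma\times D^2$. This pins down the characteristic of the bundle to be the trivial (even) one. Among all prequantum bundles, this is the one singled out as canonical: the unique one invariant under $\mathrm{Sp}$-action and compatible with gluing.

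The main obstacle is this last step: making precise that the holonomies along the lattice generators are determined, and are trivial, in the canonical normalization. Doing so requires evaluating the $4$-dimensional Chern--Weil phase on the explicit fillings. I would handle it by choosing a symplectic basis of $H_1(\Sigma)$, reducing to the rank-one computation for each $\Lambda$-coordinate, and then using the bilinearity of $K$ together with evenness to control the cross terms.
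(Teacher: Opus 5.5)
There is a genuine gap, and it sits in the higher-rank cross terms. In the descent step, setting $\epsilon\equiv 1$ turns the cocycle identity $e_{\gamma+\gamma'}(x)=e_\gamma(x+\gamma')\,e_{\gamma'}(x)$ into the requirement $\exp\bigl(\pi i\,\omega_{\Sigma,K}(\gamma,\gamma')\bigr)=1$, i.e.\ $\omega_{\Sigma,K}(\gamma,\gamma')\in 2\mathbb Z$. Integrality only gives the value $\pm1$, and $-1$ breaks the cocycle.

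Evenness of $K$ controls the diagonal values $K(u,u)$. But on $H^1(\Sigma;\Lambda)=H^1(\Sigma;\mathbb Z)\otimes\Lambda$ one has $\omega_{\Sigma,K}(a\otimes u,b\otimes v)=(a\cdot b)\,K(u,v)$, which involves off-diagonal values. For the $A_2$ form $K=\begin{pmatrix}2&-1\\-1&2\end{pmatrix}$ and $a\cdot b=1$ you get $\omega_{\Sigma,K}(a\otimes e_1,b\otimes e_2)=-1$.

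So $\epsilon\equiv 1$ is admissible only when $K\in 2M_n(\mathbb Z)$, for example in rank one with $k$ even. In general $\epsilon$ must be a quadratic refinement: $\epsilon(\gamma+\gamma')=\epsilon(\gamma)\epsilon(\gamma')(-1)^{\omega_{\Sigma,K}(\gamma,\gamma')}$. A choice-free refinement does exist, and this is where evenness really enters. Write $K=B+B^\top$ with $B$ integral, and set $\epsilon(\gamma)=(-1)^{(I\otimes B)(\gamma,\gamma)}$, where $I$ is the intersection form. Changing $B$ by an antisymmetric integral matrix changes the exponent by an even integer, and this $\epsilon$ is $\mathrm{Sp}(2g,\mathbb Z)$-invariant. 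So your conclusion that no spin structure is needed survives, but the formula $\epsilon\equiv1$ does not.

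The same error breaks the canonicity step. In the symmetric gauge the connection form vanishes along radial lines. Hence the holonomy of the straight loop $t\mapsto t\gamma$ is $\epsilon(\gamma)^{\pm1}$, and for \emph{any} prequantum bundle these holonomies obey the quadratic relation above. The reason is that the triangle spanned by $\gamma,\gamma'$ has symplectic area $\tfrac12\omega_{\Sigma,K}(\gamma,\gamma')$.

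Consequently, ``all straight-loop holonomies equal $1$'' describes no prequantum bundle at all once $K$ has an odd off-diagonal entry. If the loops of $a\otimes e_1$ and $b\otimes e_2$ have trivial holonomy, the loop of $a\otimes e_1+b\otimes e_2$ is forced to have holonomy $-1$.

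Your Chern--Weil evaluation is also not valid. The $\mathbb T$-bundle on $\Sigma\times S^1$ defined by the loop has Chern class $\gamma\times[S^1]^*\neq 0$, which does not lie in the image of $H^2(\Sigma\times D^2;\Lambda)$. So there is no product filling, and the interpolating connection $t\gamma$ is not flat: its curvature is $dt\wedge\gamma$. The cross terms you planned to ``control by evenness'' are exactly the ones that produce the sign.

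For comparison, the paper gives no argument here. It cites \cite[Proposition~2.12]{Galviz2}, where $\mathcal L_{\Sigma,K}$ is built from the Chern--Weil phase on the flat locus. There the curvature identity \eqref{eq:toral-prequantum-curvature} is part of the construction, and ``canonical'' means choice-free and natural.

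If you want a precise uniqueness statement, avoid holonomy computations altogether:
\begin{enumerate}
\item Two prequantum bundles differ by a flat bundle, i.e.\ by a character of $H^1(\Sigma;\mathbb Z)\otimes\Lambda$.
\item If both bundles are mapping-class-group invariant, this character is $\mathrm{Sp}(2g,\mathbb Z)$-invariant.
\item Such a character is trivial, because $\mathrm{Sp}(2g,\mathbb Z)$ acts transitively on primitive vectors.
\end{enumerate}
Naturality of the Chern--Weil construction supplies existence, and by uniqueness its characteristic must be the $\epsilon$ above, not $1$.
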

Thus the classical boundary phase space comes equipped with exactly the prequantum geometry required for geometric quantization.

To quantize in real polarization, choose a rational Lagrangian
\(L\subset H^1(\Sigma;\mathbb R)\) for the ordinary cup-product symplectic form on
\(H^1(\Sigma;\mathbb R)\). This gives the translation-invariant real polarization
\begin{equation}\label{eq:real-polarization}
\mathcal P_L:=L\otimes \mathfrak t \subset H^1(\Sigma;\mathfrak t).
\end{equation}
The leaves of this polarization are compact subtori. Only some of them satisfy the Bohr–Sommerfeld condition, and these are exactly the leaves that contribute quantum states.
\begin{prop}\cite[Proposition~2.14]{Galviz2}\label{prop:BS-leaves}
Let \(\Sigma_g\) be a connected closed oriented surface of genus \(g\), and let
\(L\subset H^1(\Sigma_g;\mathbb R)\) be a rational Lagrangian.
Then the Bohr--Sommerfeld leaves of the real polarization \(\mathcal P_L\) are finite in
number; in genus one they are parametrized by \(G_K=\Lambda^*/K\Lambda\), and in genus \(g\) they are
parametrized by \(G_K^g=(\Lambda^*/K\Lambda)^g\).
\end{prop}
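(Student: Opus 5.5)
We work in coordinates adapted to the rational Lagrangian \(L\) and reduce the problem to an explicit lattice computation. Since \(L\subset H^1(\Sigma_g;\mathbb R)\) is rational Lagrangian, we can choose a symplectic integral basis \(a_1,\dots,a_g,b_1,\dots,b_g\) of \(H^1(\Sigma_g;\mathbb Z)\) with \(L=\operatorname{span}_{\mathbb R}(a_1,\dots,a_g)\). Strictly, this needs \(L\cap H^1(\Sigma_g;\mathbb Z)\) to be a primitive Lagrangian sublattice, which it is, since it is saturated. Such a basis exists because \(Sp(2g,\mathbb Z)\) acts transitively on such sublattices. Using this basis, we identify
\[
H^1(\Sigma_g;\mathfrak t)\cong \mathfrak t^g\oplus\mathfrak t^g\ni(x,y),\qquad
\omega_{\Sigma,K}((x,y),(x',y'))=\sum_i\bigl(K(x_i,y'_i)-K(y_i,x'_i)\bigr),
\]
and \(\mathcal M_\Sigma(\mathbb T)=(\mathfrak t/\Lambda)^g\times(\mathfrak t/\Lambda)^g\). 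The polarization \(\mathcal P_L\) consists of the \(x\)-directions. Its leaves are therefore the tori \(\{y=\text{const}\}\), so the leaf space is the torus \((\mathfrak t/\Lambda)^g\) of \(y\)-values. Because \(\omega\) splits as an orthogonal sum over handles, the genus-\(g\) statement reduces to \(g=1\) factorwise. The reduction is legitimate because the prequantum bundle of Proposition~\ref{prop:prequantum-line} is the external tensor product of the genus-one bundles up to a flat twist, and a flat twist only translates the Bohr--Sommerfeld set.

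For \(g=1\), a leaf \(\ell_y=\{(x,y):x\in\mathfrak t/\Lambda\}\) is Bohr--Sommerfeld exactly when \(\mathcal L_{\Sigma,K}|_{\ell_y}\) with \(\nabla_{\Sigma,K}\) has trivial holonomy. Curvature vanishes on the leaf, so the holonomy is a character \(\Lambda\to U(1)\), namely the holonomy around the loops \(x\mapsto x+t\mu\) for \(\mu\in\Lambda\). Comparing two leaves \(\ell_y\) and \(\ell_{y'}\) by the cylinder swept out between them, the curvature formula \eqref{eq:toral-prequantum-curvature} gives
\[
\operatorname{hol}_{\ell_{y'}}(\mu)\,\operatorname{hol}_{\ell_y}(\mu)^{-1}=\exp\bigl(\pm2\pi i\,K(\mu,y'-y)\bigr).
\]
Hence \(y\mapsto\operatorname{hol}_{\ell_y}\) is an affine map \(\mathfrak t\to\Hom(\Lambda,U(1))\) whose linear part is \(y\mapsto\exp(2\pi iK(\cdot,y))\). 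The Bohr--Sommerfeld condition then reads \(K(\mu,y)\in\mathbb Z+c(\mu)\) for all \(\mu\in\Lambda\), where \(c\) is a fixed shift from the base leaf. Equivalently, \(K y\in c+\Lambda^*\), where \(K:\mathfrak t\to\mathfrak t^*\) is the extension of \(K\). Since \(K\) is nondegenerate, the solutions \(y\) form a coset of \(K^{-1}\Lambda^*\) in \(\mathfrak t\). Modulo \(\Lambda\), this coset is a torsor for \(K^{-1}\Lambda^*/\Lambda\cong\Lambda^*/K\Lambda=G_K\), which is finite of order \(|\det K|\). This proves finiteness and the genus-one parametrization. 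Taking the product over the \(g\) handles gives \(G_K^g\).

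\textbf{Main obstacle.} The delicate point is to pin down the base-leaf holonomy \(c\) and to justify that the prequantum bundle is the product of genus-one pieces. This requires the explicit description of \(\mathcal L_{\Sigma,K}\), for instance as the quotient of the trivial bundle on \(H^1(\Sigma;\mathfrak t)\) by a Heisenberg-type \(H^1(\Sigma;\Lambda)\)-action with a cocycle involving \(\tfrac12K(\lambda,\lambda)\). Here the evenness of \(K\) is what makes the cocycle descend without spin-structure choices. Even without computing \(c\) exactly, the torsor description already yields a canonical bijection with \(G_K\) once a base point is fixed, which suffices for the statement. I would first try to show that \(c=0\) with the even-\(K\) cocycle, so that the leaf through \(y=0\) is Bohr--Sommerfeld and the parametrization is canonical.
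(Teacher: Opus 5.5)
Your argument is correct. The paper gives no proof of this proposition; it is quoted from \cite[Proposition~2.14]{Galviz2}. So there is no in-paper route to compare against, and what you have written is a sound self-contained proof.

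Your core step uses only the curvature of $\mathcal L_{\Sigma,K}$, not its Chern--Weil construction. The leafwise holonomy is a character of the leaf lattice. By \eqref{eq:toral-prequantum-curvature} applied to the annulus swept out between two parallel leaves, it varies affinely in the transverse coordinate, with linear part $y\mapsto \exp\bigl(2\pi i\,K(\cdot,y)\bigr)$. This delivers exactly the torsor statement the paper actually uses later: in the proof of Theorem~\ref{thm:toral-state-space-identification}, the Bohr--Sommerfeld leaves enter only as a $G_K^{\,g}$-torsor.

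A few small points:
\begin{enumerate}
\item The reduction to genus one through an external tensor decomposition of the prequantum bundle is unnecessary. The same annulus computation with $\mu\in\Lambda^g$ and $y,y'\in\mathfrak t^g$ gives the ratio $\exp\bigl(\pm 2\pi i\sum_i K(\mu_i,y'_i-y_i)\bigr)$. This yields the conditions $Ky_i\in c_i+\Lambda^*$ for all $i$ at once.
\item Since \eqref{eq:toral-hilbert-space} quantizes $\mathcal L_{\Sigma,K}\otimes|\det\mathcal P_L^*|^{1/2}$, you should say why the half-density factor does not shift the condition. For the translation-invariant polarization $\mathcal P_L$, translation-invariant half-densities are leafwise parallel, so this factor has trivial holonomy on every leaf.
\item Your intended normalization $c=0$ does go through in the Heisenberg model. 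Evenness lets you write $K=S+S^\top$ with $S$ integral. For $\lambda=\sum_j\alpha_j\otimes e_j\in H^1(\Sigma;\Lambda)$, with $\{e_j\}$ a basis of $\Lambda$ and $\alpha\cdot\beta=\int_\Sigma\alpha\cup\beta$, set $\epsilon(\lambda)=\exp\bigl(\pi i\sum_{j,k}(\alpha_j\cdot\alpha_k)S_{jk}\bigr)$.
\begin{itemize}
\item This $\epsilon$ is a quadratic refinement of $\exp(\pi i\,\omega_{\Sigma,K})$ and is independent of the choice of $S$.
\item It is identically $1$ on $(L\cap H^1(\Sigma;\mathbb Z))\otimes\Lambda$, because $L$ is isotropic.
\item Hence the leaf $y=0$ is Bohr--Sommerfeld, and the labeling by $K^{-1}\Lambda^*/\Lambda\cong G_K$ becomes canonical.
\end{itemize}
That canonical base point is what the later \emph{canonical} identification $\Phi_{\Sigma,K}$ tacitly needs. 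Whether this model agrees with the Chern--Weil bundle of \cite{Galviz2} is a property of that construction, not a gap in your proof.
\end{enumerate}
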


The resulting Hilbert space is
\begin{equation}\label{eq:toral-hilbert-space}
\mathcal H_{\mathbb T,K}(\Sigma,L)
:=
\bigoplus_{\ell\in \mathcal{BS}(\Sigma,L)}
\Gamma_{\mathrm{flat}}
\!\left(
\ell;\,
\mathcal L_{\Sigma,K}\otimes |\det \mathcal P_L^*|^{1/2}
\right),
\end{equation}
and every Bohr--Sommerfeld leaf contributes a one-dimensional summand.

\begin{theorem}\cite[Theorem~2.17]{Galviz2}\label{thm:toral-quantization-dimension}
For a connected closed oriented surface \(\Sigma_g\) of genus \(g\),
\[
\dim \mathcal H_{\mathbb T,K}(\Sigma_g,L)
=
|G_K|^g
=
|\det K|^g.
\]
\end{theorem}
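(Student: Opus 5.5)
The dimension count will be read off from the decomposition \eqref{eq:toral-hilbert-space} combined with Proposition~\ref{prop:BS-leaves}. Since $\mathcal H_{\mathbb T,K}(\Sigma_g,L)$ is a direct sum over the Bohr--Sommerfeld leaves, its dimension is $\sum_{\ell}\dim\Gamma_{\mathrm{flat}}(\ell;\mathcal L_{\Sigma,K}\otimes|\det\mathcal P_L^*|^{1/2})$. The proof therefore has three parts: show that each summand is one-dimensional, count the leaves, and identify $|G_K|$ with $|\det K|$.

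\emph{Summands.} Fix a leaf $\ell$. It is a compact connected subtorus, a translate of the image of $\mathcal P_L=L\otimes\mathfrak t$ in $\mathcal M_\Sigma(\mathbb T)$; rationality of $L$ makes this image closed. Because $\ell$ is Lagrangian, the curvature \eqref{eq:toral-prequantum-curvature} restricts to zero on it, so $\mathcal L_{\Sigma,K}|_\ell$ is flat. The Bohr--Sommerfeld condition says exactly that this flat connection has trivial holonomy. The half-form bundle $|\det\mathcal P_L^*|^{1/2}$ is canonically trivialized by translation invariance, since the leaves are cosets of a fixed subtorus and the Bott connection is trivial in that trivialization. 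Hence the tensor product is a trivial flat line bundle on a connected $\ell$, and its flat sections form a line, i.e.\ a one-dimensional space. Leaves not satisfying the Bohr--Sommerfeld condition carry nontrivial holonomy and contribute no flat sections, which justifies indexing the sum by $\mathcal{BS}(\Sigma,L)$ alone.

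\emph{Counting.} By Proposition~\ref{prop:BS-leaves}, $\mathcal{BS}(\Sigma_g,L)$ is in bijection with $G_K^g$, so $\dim\mathcal H_{\mathbb T,K}(\Sigma_g,L)=|G_K|^g$. I expect this step to be the main obstacle if one wants more than a citation, since it is where the lattice arithmetic enters. For a self-contained check I would use a symplectic basis $a_i,b_i$ of $H_1(\Sigma_g;\mathbb Z)$ with $L$ spanned by the duals of the $a_i$. Then $\mathcal M_\Sigma(\mathbb T)\cong(\mathfrak t/\Lambda)^{2g}$ with symplectic form $\sum_i K(dx_i\wedge dy_i)$, and the leaves are the sets $\{y=\mathrm{const}\}$. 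The holonomy of $\mathcal L_{\Sigma,K}$ along the $x_i$-cycles is $\exp(2\pi i\,K(y_i,\cdot))$ on $\Lambda$, which is trivial exactly when $K y_i\in\Lambda^*$, i.e.\ $y_i\in K^{-1}\Lambda^*$. Taken modulo $\Lambda$, such $y_i$ range over $K^{-1}\Lambda^*/\Lambda\cong\Lambda^*/K\Lambda$. Since rational Lagrangians are related by $Sp(2g,\mathbb Z)$ up to rational change of lattice, one reduces to this standard case. For a general rational $L$, the count is stable by the BKS comparison operators, or it can be handled by choosing an adapted integral basis.

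\emph{Identification.} Finally, $|\Lambda^*/K\Lambda|=|\det K|$ because, in a basis of $\Lambda$ and its dual basis of $\Lambda^*$, the map $K:\Lambda\to\Lambda^*$ is given by the integer Gram matrix. Nondegeneracy means $\det K\neq0$, and the index of the image of an injective endomorphism of $\mathbb Z^n$ equals the absolute value of its determinant, as one sees from the Smith normal form.
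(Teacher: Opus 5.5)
Your proposal is correct and follows the route the paper relies on. The Hilbert space is a direct sum over Bohr--Sommerfeld leaves; each leaf contributes a line because the half-form bundle is translation-invariantly trivial and the prequantum connection has trivial holonomy there; the leaves are indexed by $G_K^g$ (Proposition~\ref{prop:BS-leaves}); and $|\Lambda^*/K\Lambda|=|\det K|$ by Smith normal form.

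Two small refinements to your self-contained check, neither affecting the conclusion. First, the explicit holonomy $\exp(2\pi i\,K(y_i,\cdot))$ is right only up to the flat twist (choice of semicharacter) built into the canonical Chern--Simons line bundle; this twist only translates the admissible $y_i$ and so leaves the count $|K^{-1}\Lambda^*/\Lambda|=|G_K|$ per handle unchanged. Second, no rational change of lattice is needed, since $Sp(2g,\mathbb Z)$ acts transitively on rational Lagrangians, so your adapted integral basis always exists.
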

So the finite discriminant group $G_K$ controls the size of the quantum state space. This is the first place where the lattice datum encoded by $K$ appears in explicitly finite form, and it is precisely this finite group that will later govern the pointed modular category on the Reshetikhin–Turaev side.

The Hilbert space \(\mathcal H_{\mathbb T,K}(\Sigma,L)\) depends a priori on the choice of
rational Lagrangian \(L\), and the comparison between different choices is given by the
Blattner--Kostant--Sternberg (BKS) operators.

\begin{prop}\cite[Proposition~2.19]{Galviz2}\label{lem:BKS-operators}
For any pair of rational Lagrangians \(L_i\subset H^1(\Sigma;\mathbb R)\), the
Blattner--Kostant--Sternberg operator
\[
F_{L_2L_1}:
\mathcal H_{\mathbb T,K}(\Sigma,L_1)\to
\mathcal H_{\mathbb T,K}(\Sigma,L_2)
\]
is a canonical unitary isomorphism.
\end{prop}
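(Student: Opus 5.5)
The BKS operator will be built as the standard half-form pairing of polarized sections, and the proof has three steps: well-definedness, unitarity, and canonicity.

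\emph{Construction.} Let $\ell_1\in\mathcal{BS}(\Sigma,L_1)$ and $\ell_2\in\mathcal{BS}(\Sigma,L_2)$ be Bohr--Sommerfeld leaves. Because the polarizations $\mathcal P_{L_1}$ and $\mathcal P_{L_2}$ are linear, the leaves are translated subtori, and $\ell_1\cap\ell_2$ is a finite union of translates of the subtorus $((L_1\cap L_2)\otimes\mathfrak t)/\Lambda$. For flat sections $s_i$ of $\mathcal L_{\Sigma,K}\otimes|\det\mathcal P_{L_i}^*|^{1/2}$ on $\ell_i$, we define the matrix coefficient
\[
\langle s_2,F_{L_2L_1}s_1\rangle:=c_{L_1L_2}\int_{\ell_1\cap\ell_2}\langle s_1,s_2\rangle_{\mathcal L}\,\mu_{12}.
\]
Here $\mu_{12}$ is the density on $\ell_1\cap\ell_2$ produced by the canonical pairing of half-forms
\[
|\det\mathcal P_{L_1}^*|^{1/2}\otimes|\det\mathcal P_{L_2}^*|^{1/2}\to|\det(\mathcal P_{L_1}\cap\mathcal P_{L_2})^*|,
\]
which uses $\omega_{\Sigma,K}$ to identify $\mathcal P_{L_1}/(\mathcal P_{L_1}\cap\mathcal P_{L_2})$ with the dual of $\mathcal P_{L_2}/(\mathcal P_{L_1}\cap\mathcal P_{L_2})$. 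The constant $c_{L_1L_2}$ is a fixed normalization, a power of $|\det K|$ depending only on $\dim(L_1\cap L_2)$. Well-definedness uses Proposition~\ref{prop:prequantum-line}: the curvature of $\mathcal L_{\Sigma,K}$ is $-2\pi i\,\omega_{\Sigma,K}$, and $\omega_{\Sigma,K}$ vanishes on $\mathcal P_{L_1}\cap\mathcal P_{L_2}$. Hence $\langle s_1,s_2\rangle$ is a flat function on each component of $\ell_1\cap\ell_2$, so each integral is a finite sum of terms, each a constant times a volume.

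\emph{Unitarity.} I would reduce this to a model computation. The symplectic vector space $(H^1(\Sigma;\mathfrak t),\omega_{\Sigma,K})$ is $H^1(\Sigma;\mathbb R)\otimes\mathfrak t$ with form $\cup\otimes K$. Choose a symplectic basis of $H^1(\Sigma;\mathbb Z)$ adapted to $L_1$. Because $L_1$ and $L_2$ are rational, some element $\gamma\in Sp(2g,\mathbb Z)$ carries $L_1$ to $L_2$. Transporting by the action of $\gamma$ on the torus (which preserves $\mathcal L_{\Sigma,K}$ up to a canonical isomorphism), together with the factorization of $\gamma$ into generators, reduces $F_{L_2L_1}$ to a composition of elementary cases:
\begin{itemize}[label={}]
\item $L_2=L_1$: the identity.
\item Transversal $L_1,L_2$ in a single genus-one factor: the matrix is the discrete Fourier transform on $G_K$, with entries $|G_K|^{-1/2}\exp(2\pi i\,b_K(x,y))$, where $b_K$ is the discriminant bilinear form. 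It is unitary because $b_K$ is nondegenerate on $G_K=\Lambda^*/K\Lambda$.
\item Shear $T$-type changes: the matrix is diagonal with entries given by phases $\exp(2\pi i\,q_K(x))$. Evenness of $K$ makes $q_K$ well defined on $G_K$, so these are well-defined diagonal unitaries.
\end{itemize}
A direct BKS composition of these pieces would only give a projective composition law. The claim, however, is just that each individual $F_{L_2L_1}$ is unitary, and that follows once the matrix in the $\gamma$-adapted basis is shown to be the tensor product of these elementary unitaries. The Bohr--Sommerfeld parametrization by $G_K^g$ from Proposition~\ref{prop:BS-leaves} supplies exactly these bases.

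\emph{Main obstacle.} The hard step is computing the intersection volumes and the half-form pairing precisely enough to fix $c_{L_1L_2}$. The number of components of $\ell_1\cap\ell_2$, and their volumes, involve $|\det K|^{k}$, where $k$ is the rank of the transversal part. These factors must combine into the $|G_K|^{-k/2}$ Fourier normalization. I would first do the rank-one, genus-one case $\Lambda=\mathbb Z$, $K=(k)$, where the intersection is $k$ points and everything is explicit. I would then pass to general $K$ via a Smith normal form of $K$ over $\mathbb Z$, which splits the computation into cyclic factors, keeping track of the fact that $q_K$ itself need not split.

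\emph{Canonicity.} Once unitarity is established, canonicity is immediate: the construction uses only $\omega_{\Sigma,K}$, $\mathcal L_{\Sigma,K}$ and the half-form bundles, with no further choices.
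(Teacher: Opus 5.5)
The paper does not prove this statement; it imports it from \cite[Proposition~2.19]{Galviz2}. Your argument therefore has to stand on its own. The construction of the pairing and the canonicity remark are reasonable, but the unitarity step has a genuine gap.

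\emph{The reduction to elementary cases fails.} Transitivity of $Sp(2g,\mathbb Z)$ on single rational Lagrangians is not what the reduction needs. You must normalize the \emph{pair} $(L_1,L_2)$, and rational pairs carry integral invariants beyond $\dim(L_1\cap L_2)$. Already in genus one, take $L_1=\mathbb R a$ and $L_2=\mathbb R(pa+qb)$ with $\gcd(p,q)=1$ and $|q|\ge 2$. The intersection number $|q|$ of the primitive integral generators is $Sp(2,\mathbb Z)$-invariant, so this pair is neither coincident nor unimodularly transverse in any symplectic basis.
\begin{itemize}
\item Geometrically, every Bohr--Sommerfeld leaf of $\mathcal P_{L_1}$ meets every Bohr--Sommerfeld leaf of $\mathcal P_{L_2}$ in $|q|^n$ points.
\item Up to your constant $c_{L_1L_2}$, the matrix coefficient is a Gauss-type sum of $|q|^n$ phases times the half-form factor $|\det(qK)|^{-1/2}$.
\item Such a kernel is not a tensor product of discrete Fourier transforms and diagonal $q_K$-phase matrices. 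At best it is, up to a scalar, a \emph{product} of such matrices along a continued-fraction expansion of $p/q$.
\end{itemize}
So the claim that the matrix in the $\gamma$-adapted basis ``is the tensor product of these elementary unitaries'' is false in general. Your main-obstacle paragraph also miscounts: components and volumes are not governed by powers of $|\det K|$ alone, because they depend on the integral relative position of $L_1$ and $L_2$.

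Your elementary list also mixes two kinds of operators. $F_{LL}=\mathrm{Id}$, and the diagonal $q_K$-phases are the matrices of transport maps $U_T$ induced by shears fixing $L$, not of BKS operators. Any expression of $F_{\gamma L,L}$ through $S$- and $T$-matrices passes through $U_\gamma^{-1}F_{\gamma L,L}$ and a composition rule. Since you explicitly decline to use one, nothing in the proposal actually connects $F_{L_2L_1}$ to the elementary pieces.

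\emph{The missing ingredient is the one you set aside.} Write $\gamma=\gamma_1\cdots\gamma_r$ and $M_j=\gamma_1\cdots\gamma_jL_1$. If the composition law \eqref{eq:BKS-composition} holds with a unimodular scalar, then $F_{M_rM_{r-1}}\cdots F_{M_1M_0}=c\,F_{L_2L_1}$ with $|c|=1$. A unimodular multiple of a product of unitaries is unitary, so the projective law is exactly sufficient, not an obstruction. However, showing that the scalar has modulus one for non-transverse and non-unimodular configurations is the same finite Gauss-sum evaluation that a direct proof of unitarity needs. Citing \eqref{eq:BKS-composition} is legitimate only if its proof in the source does not already use unitarity.

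A self-contained route goes as follows.
\begin{enumerate}
\item Translations by $H^1(\Sigma;K^{-1}\Lambda^*)$ lift to $\mathcal L_{\Sigma,K}$. They generate a finite Heisenberg group over $H^1(\Sigma;G_K)\cong G_K^{2g}$ with nondegenerate commutator pairing.
\item This group acts on every $\mathcal H_{\mathbb T,K}(\Sigma,L)$, irreducibly by the dimension count $|G_K|^g$.
\item $F_{L_2L_1}$ intertwines these actions by naturality of the pairing, so Schur's lemma gives $F_{L_2L_1}^*F_{L_2L_1}=|c|^2\,\mathrm{Id}$.
\end{enumerate}
Unitarity, together with the value of your undetermined $c_{L_1L_2}$, then reduces to one check: $\|F_{L_2L_1}\delta_{\ell_1}\|^2=1$ for a single unit vector $\delta_{\ell_1}$ supported on one leaf. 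That finite computation is where the real content of the proposition lies.
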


Their composition law is projective:
\begin{equation}\label{eq:BKS-composition}
F_{L_3L_2}\circ F_{L_2L_1}
=
e^{\frac{\pi i}{4}\mu_K(L_1,L_2,L_3)}\,F_{L_3L_1}.
\end{equation}
Here
\begin{equation}\label{eq:muK-definition}
\mu_K(L_1,L_2,L_3)
:=
\mu(\mathcal P_{L_1},\mathcal P_{L_2},\mathcal P_{L_3})
=
\sigma(K)\,\mu_\Sigma(L_1,L_2,L_3),
\end{equation}
where \(\mu_\Sigma\) is the usual surface Maslov--Kashiwara index \cite{Manoliu2} and \(\sigma(K)\) is the
signature of \(K\). Thus the failure of strict transitivity is measured not by the ordinary surface Maslov cocycle alone, but by its $K$-twisted toral refinement. This is exactly the anomaly that must be absorbed later by the extended weighting.

\subsection{\texorpdfstring{Torsion sectors, boundary states, and the extended theorem}{Torsion sectors, boundary states, and the extended theorem}}
\label{toral-tqft}
We now pass from quantization of a closed boundary surface to the data attached to a compact oriented $3$--manifold $X$. The key point is that flat toral gauge fields on $X$ decompose by torsion topological type, and each torsion component contributes a translated Lagrangian leaf on the boundary together with a canonical quantum object. The full bordism vector is the normalized sum of these leafwise contributions.

Let \(X\) be a compact oriented \(3\)--manifold. Since \(B\mathbb T\simeq\mathcal K(\Lambda,2)\)\footnote{For \(\mathbb T=\mathfrak t/\Lambda\), the classifying space \(B\mathbb T\) is an Eilenberg--MacLane space \(\mathcal K(\Lambda,2)\).},
the principal \(\mathbb T\)-bundles over \(X\) are classified by \(H^2(X;\Lambda)\). The flat theory is supported only on torsion classes $p \in \Tors H^2(X;\Lambda).$ For each such \(p\), let \(\mathcal M_{X,p}(\mathbb T)\) denote the corresponding moduli space of flat fields.

\begin{prop}\cite[Proposition~3.1(ii)--(iii)]{Galviz2}\label{prop:torsion-moduli} For each \(p \in \Tors H^2(X;\Lambda)\), the moduli space
\(\mathcal M_{X,p}(\mathbb T)\) of flat \(\mathbb T\)-fields with characteristic class \(p\)
is an affine torus modeled on \(H^1(X;\mathfrak t)/H^1(X;\Lambda)\).
\end{prop}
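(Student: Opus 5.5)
The plan is to describe flat $\mathbb T$-connections on $X$ with fixed characteristic class $p$ as a torsor over the group of flat connections on the trivial bundle. Then I identify that group with $H^1(X;\mathfrak t)/H^1(X;\Lambda)$ after quotienting by its finite component group.

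\textbf{Step 1: the torsor structure.} Since $\mathbb T$ is Abelian, any two connections on principal $\mathbb T$-bundles can be tensored, or differenced. Fix a flat connection $A_p$ on a bundle $P_p$ of class $p$. Such a connection exists exactly because $p$ is torsion: the Chern--Weil image of $p$ in $H^2(X;\mathfrak t)$ is the real image of $p$, which vanishes. So we can choose any connection and correct its curvature by an exact $\mathfrak t$-valued form. Every other flat connection on $P_p$ is then $A_p+a$ with $a\in\Omega^1(X;\mathfrak t)$ closed. Gauge transformations are maps $X\to\mathbb T$, and they act by $a\mapsto a+g^{-1}dg$. The forms $g^{-1}dg$ range over closed forms whose periods lie in $\Lambda$, i.e.\ exact forms together with representatives of $H^1(X;\Lambda)\to H^1(X;\mathfrak t)$. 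Hence the gauge classes of flat connections on $P_p$ form a torsor, via $[A_p]+[a]$, over $H^1(X;\mathfrak t)/\operatorname{Im}H^1(X;\Lambda)$.

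\textbf{Step 2: the model group.} The image of $H^1(X;\Lambda)$ in $H^1(X;\mathfrak t)$ is a full lattice. This holds because $H^1(X;\Lambda)=\Hom(H_1(X),\Lambda)$ is torsion-free, and it spans since $H^1(X;\mathbb R)=H^1(X;\mathbb Z)\otimes\mathbb R$. So the quotient is a compact torus, which is the group written $H^1(X;\mathfrak t)/H^1(X;\Lambda)$ in the statement. The freeness of the action and its transitivity on the $p$-component both follow from Step 1.

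\textbf{Step 3: independence of choices.} Changing $A_p$ to another flat base point only translates the torsor. Changing $P_p$ within its isomorphism class (bundles are classified by $H^2(X;\Lambda)$) acts by a gauge transformation and respects the affine structure. The affine torus $\mathcal M_{X,p}(\mathbb T)$ is therefore canonically defined.

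\textbf{Expected obstacle.} The main subtlety is the holonomy description used in the paper's conventions, which I would use as a cross-check. Flat connections correspond to homomorphisms $H_1(X)\to\mathbb T$. Those with class $p$ form a coset of the identity component $\Hom(H_1(X)/\mathrm{tors},\mathbb T)\cong H^1(X;\mathfrak t)/H^1(X;\Lambda)$. The components are indexed by $\Hom(\Tors H_1(X),\mathbb T)\cong\Tors H^2(X;\Lambda)$ via the Bockstein $H^1(X;\mathbb T)\to H^2(X;\Lambda)$. Checking that this Bockstein agrees with the bundle characteristic class is the one step needing care, and I would verify it from the exponential sequence $0\to\Lambda\to\mathfrak t\to\mathbb T\to0$ of sheaves.
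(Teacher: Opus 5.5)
Your argument is correct, and its main line (Steps 1--3) is complete. The Bockstein check you flag is not needed for the statement as you prove it.

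The paper gives no proof here; it imports the result from \cite{Galviz2}. Its framing, though, is the cohomological one you use only as a cross-check: a flat field is determined up to gauge by its holonomy, and bundles are classified by $H^2(X;\Lambda)$ since $B\mathbb T\simeq\mathcal K(\Lambda,2)$. In that picture the coefficient sequence $0\to\Lambda\to\mathfrak t\to\mathbb T\to0$ gives the extension
\[
0\longrightarrow H^1(X;\mathfrak t)/H^1(X;\Lambda)\longrightarrow H^1(X;\mathbb T)\xrightarrow{\ \beta\ }\Tors H^2(X;\Lambda)\longrightarrow 0,
\]
and each fiber of $\beta$ is a coset of the identity component.

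Your route instead fixes a bundle $P_p$, builds a flat reference connection by Chern--Weil, and divides closed $\mathfrak t$-valued $1$-forms by those with $\Lambda$-periods. This buys two things. First, the class $p$ is built in by the choice of $P_p$, so you never need to identify the Bockstein of the holonomy with the characteristic class of the bundle. Second, the affine structure appears concretely as translation by closed forms, which is exactly what the later translation-invariant half-densities on $\mathcal M_{X,p}(\mathbb T)$ rely on.

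The cohomological route buys the global picture at once. Exactness shows that the components of the full moduli space correspond bijectively to $\Tors H^2(X;\Lambda)$, each a copy of the same torus; this is the form in which the paper sums over torsion sectors. From your side you would add only the easy Chern--Weil remark that non-torsion classes carry no flat connection.

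One small tightening in Step 2: torsion-freeness plus spanning does not by itself make the image discrete. What does is that $H^1(X;\Lambda)$ is finitely generated, by compactness of $X$, and that $H^1(X;\Lambda)\otimes\mathbb R\cong H^1(X;\mathfrak t)$.
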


Its boundary image will be denoted by
\[
\Lambda_{X,p}:=\operatorname{Im}\bigl(\mathcal M_{X,p}(\mathbb T)\to \mathcal M_{\partial X}(\mathbb T)\bigr).
\]

\begin{prop}\cite[Proposition~3.3]{Galviz2}\label{prop:boundary-leaves}
For each torsion class \(p\in \Tors H^2(X;\Lambda)\), the image
\(\Lambda_{X,p}\subset \mathcal M_{\partial X}(\mathbb T)\) is a translated Lagrangian
leaf, and all such leaves are parallel with common tangent space \(L_X\).
\end{prop}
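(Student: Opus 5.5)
The plan is to describe $\Lambda_{X,p}$ explicitly as the image of an affine torus under an affine map, and then read off both the translate structure and the tangent space from that description.

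\textbf{Step 1 (the map is affine).} By Proposition~\ref{prop:torsion-moduli}, $\mathcal M_{X,p}(\mathbb T)$ is a torsor for $H^1(X;\mathfrak t)/H^1(X;\Lambda)$. Concretely, fix a flat connection $A_p$ on a bundle of class $p$. Every other flat connection of class $p$ is then gauge equivalent to $A_p+a$ for a closed $\mathfrak t$-valued $1$-form $a$, and it is well defined modulo $H^1(X;\Lambda)$. Restriction to $\Sigma=\partial X$ commutes with this description:
\[
(A_p+a)|_\Sigma=A_p|_\Sigma+r_X(a).
\]
Hence the boundary map $\mathcal M_{X,p}(\mathbb T)\to\mathcal M_\Sigma(\mathbb T)$ is affine. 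Its linear part is induced by $r_X$, and it carries $H^1(X;\Lambda)$ into $H^1(\Sigma;\Lambda)$.

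\textbf{Step 2 (the image is a closed translated subtorus).} It follows that
\[
\Lambda_{X,p}=[A_p|_\Sigma]+\pi\bigl(L_X\bigr),
\]
where $\pi:H^1(\Sigma;\mathfrak t)\to\mathcal M_\Sigma(\mathbb T)$ is the quotient map. To see that $\pi(L_X)$ is a closed subtorus, I will check that $L_X$ is rational. By Lemma~\ref{lem:lambdaX-LX}, $L_X=L_X^{\mathbb R}\otimes\mathfrak t$. The space $L_X^{\mathbb R}$ is the real span of the image of $H^1(X;\mathbb Z)\to H^1(\Sigma;\mathbb Z)$, so $L_X\cap H^1(\Sigma;\Lambda)$ is a full-rank lattice in $L_X$. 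Therefore $\pi(L_X)\cong L_X/(L_X\cap H^1(\Sigma;\Lambda))$ is a compact subtorus, and it is the image of the compact torus $\mathcal M_{X,p}$.

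\textbf{Step 3 (Lagrangian leaf of the polarization).} The tangent space of $\Lambda_{X,p}$ at every point is $L_X$, which is Lagrangian for $\omega_{\Sigma,K}$ by Proposition~\ref{prop:LX-Lagrangian}. Moreover, $L_X^{\mathbb R}$ is a rational Lagrangian for the cup product, so $L_X=\mathcal P_{L_X^{\mathbb R}}$ in the sense of \eqref{eq:real-polarization}. Thus $\Lambda_{X,p}$ is exactly a leaf of the translation-invariant real polarization, i.e.\ a translated Lagrangian leaf. Since the linear part is $L_X$ independently of $p$, all the $\Lambda_{X,p}$ are parallel with common tangent space $L_X$. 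They differ only by the translates $[A_p|_\Sigma]$, and these depend on $p$ only through the torsion data restricted to the boundary.

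\textbf{Main obstacle.} The delicate point is Step 1: making the affine structure precise when $p\neq0$. One must check that the reference connection $A_p$ can be chosen flat, which is possible exactly because $p$ is torsion. One must also check that the gauge quotient on $X$ maps compatibly to the quotient by $H^1(\Sigma;\Lambda)$ on $\Sigma$, which requires large gauge transformations on $X$ to restrict to large gauge transformations on $\Sigma$. I would handle this via the Cheeger--Simons/differential cohomology description of flat $\mathbb T$-fields as $H^1(X;\mathbb T)$ lying over the torsion class $p$. In that description restriction is a group homomorphism, and affineness becomes automatic.
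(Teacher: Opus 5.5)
Your argument is correct and is essentially the one the paper relies on (it quotes the statement from \cite[Proposition~3.3]{Galviz2} without proof, with Propositions~\ref{prop:torsion-moduli} and~\ref{prop:LX-Lagrangian} as the inputs). Restriction is affine with linear part induced by $r_X$, so each $\Lambda_{X,p}$ is a translate of the closed subtorus $\pi(L_X)$, and its tangent space $L_X=\mathcal P_{L_X^{\mathbb R}}$ is Lagrangian.

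One caution: your closing remark that the translates depend on $p$ ``only through the torsion data restricted to the boundary'' is not needed and, read literally, is misleading. Indeed $p|_{\partial X}=0$ always, because $H^2(\partial X;\Lambda)$ is torsion-free. Yet distinct $p$ can give distinct parallel leaves: for the twisted $I$-bundle over the Klein bottle, the two $U(1)$ torsion sectors restrict to two different translates of $\pi(L_X)$.
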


So each torsion component lands on a leaf parallel to the boundary polarization determined in Section \ref{sec:boundary-phase-space}. On each torsion component one has two canonical geometric objects. The first is the Chern–Simons section.

\begin{prop}\cite[Theorem~3.4]{Galviz2}\label{prop:CS-section}
For each \(p\in \Tors H^2(X;\Lambda)\), the classical toral Chern--Simons action
determines a covariantly constant section
\[
\sigma_{X,p}:\mathcal M_{X,p}(\mathbb T)\to r_{X,p}^{\,*}\mathcal L_{\partial X,K}.
\]
\end{prop}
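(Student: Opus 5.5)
To construct \(\sigma_{X,p}\), I would define the classical toral Chern--Simons action on flat fields of type \(p\) as an element of the fiber of \(\mathcal L_{\partial X,K}\) at the boundary restriction. I would then show that this assignment is constant along gauge orbits, so that it descends to \(\mathcal M_{X,p}(\mathbb T)\), and that it is horizontal for \(\nabla_{\partial X,K}\).

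\textbf{Definition of the section.} Fix \(p\in\Tors H^2(X;\Lambda)\) and a principal \(\mathbb T\)-bundle \(P\to X\) of class \(p\). For a flat connection \(A\) on \(P\), I would define \(\sigma_{X,p}(A)\) by the usual Freed-style recipe.
\begin{enumerate}[label=(\roman*)]
\item Choose a compact oriented \(4\)-manifold \(W\) with \(\partial W = X\cup_{\partial X} W'\), or more simply work relative to a filling of \(\partial X\).
\item Extend \(A\) over \(W\).
\item Set the value to be \(\exp\bigl(2\pi i\int_W \tfrac12 K(F\wedge F)\bigr)\), viewed as an element of the Chern--Weil line \(\mathcal L_{\partial X,K}|_{A|_{\partial X}}\) of Proposition~\ref{prop:prequantum-line}.
\end{enumerate}
The fiber of \(\mathcal L_{\partial X,K}\) was built by comparing fillings via exactly this \(4\)-dimensional Chern--Weil phase. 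Hence independence of the filling reduces to the closed case: for a closed \(4\)-manifold \(N\) with a \(\mathbb T\)-bundle, \(\tfrac12\int_N K(F\wedge F)=\tfrac12 K(c_1\cup c_1)[N]\in\mathbb Z\). This holds because \(K\) is even and integral, and it is the only place where evenness is used. Torsion of \(p\) guarantees that a filling with extended bundle exists, possibly after passing to a multiple and using bordism of \(B\mathbb T\) with \(\Omega_3(\mathcal K(\Lambda,2))\) torsion. I would handle this by invoking the construction of the line in Proposition~\ref{prop:prequantum-line} rather than rebuilding it.

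\textbf{Gauge invariance.} A gauge transformation \(g:X\to\mathbb T\) acts on \(A\), and it also acts on the fiber of \(\mathcal L\) through the boundary gauge action used to descend the line to \(\mathcal M_{\partial X}\). I would glue the mapping torus of \(g\) to reduce the comparison to a closed \(4\)-manifold computation, which again yields an integer. This shows that \(\sigma\) descends to the quotient \(\mathcal M_{X,p}(\mathbb T)\), which by Proposition~\ref{prop:torsion-moduli} is an affine torus modeled on \(H^1(X;\mathfrak t)/H^1(X;\Lambda)\).

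\textbf{Covariant constancy.} I would differentiate along a path \(A_t=A+ta\) with \(a\) a closed \(\mathfrak t\)-valued \(1\)-form on \(X\), which is tangent to \(\mathcal M_{X,p}\). The variation formula for the Chern--Simons phase gives
\[
\frac{d}{dt}\log\sigma(A_t)=\pi i\int_{\partial X}K(A\wedge a)+\pi i\int_X K(F_A\wedge a)\quad\text{(schematically)}.
\]
The bulk term vanishes by flatness. The boundary term is exactly the connection \(1\)-form of \(\nabla_{\partial X,K}\) evaluated on \(r_X(a)\). Hence \(\nabla\sigma=0\) along \(\mathcal M_{X,p}\).

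Consistency requires the restriction of the curvature \(-2\pi i\,\omega_{\partial X,K}\) to the image to vanish. By Proposition~\ref{prop:boundary-leaves} the image is a translate of the Lagrangian \(L_X\) (Proposition~\ref{prop:LX-Lagrangian}), so this curvature restriction does vanish.

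\textbf{Expected obstacle.} The main difficulty is the global well-definedness in the nontrivial torsion sector \(p\neq0\). There one cannot use a global connection form \(A\), so the variation and gauge arguments must be run with a reference flat connection \(A_p\) and differences \(A-A_p\in\Omega^1(X;\mathfrak t)\). The key check is that the large-gauge shifts by \(H^1(X;\Lambda)\) act on the fiber of \(\mathcal L\) by the same cocycle defining \(\mathcal L\) on \(\mathcal M_{\partial X}\). I would verify this through the closed-\(4\)-manifold integrality of \(\tfrac12 K(c\cup c)\), which again uses that \(K\) is even.
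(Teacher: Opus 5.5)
Your outline follows the same route as the paper. The paper does not prove this statement itself; it imports it from \cite[Theorem~3.4]{Galviz2}. Its description of $\mathcal L_{\Sigma,K}$, as lines obtained by comparing fillings via the $4$-dimensional Chern--Weil phase, indicates that the cited proof is the Freed-type construction you describe. The section is the closed Chern--Weil phase of $X\cup_{\partial X}W'$, viewed as a function of the filling $W'$. Well-definedness rests on $\tfrac12K(c\cup c)[N]\in\mathbb Z$ for even $K$. Covariant constancy comes from the variation formula, with the bulk term killed by flatness. The curvature restriction vanishes because $L_X$ is Lagrangian. Two of your justifications, however, need repair.

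\textbf{Fillings and the role of torsion.} Torsion of $p$ is not what guarantees fillings, and your fallback of ``passing to a multiple'' would fail. If only $N$ copies bound, the Chern--Weil integral fixes the phase only up to an $N$-th root of unity, so it determines no element of the fiber. No fallback is needed: $\Omega_3^{SO}(B\mathbb T)=0$ by the Atiyah--Hirzebruch spectral sequence, since $H_*(B\mathbb T;\mathbb Z)$ is free and concentrated in even degrees and $\Omega^{SO}_q=0$ for $q=1,2,3$. So every closed oriented $3$-manifold with a $\mathbb T$-bundle, of any class, bounds with the bundle extended. Torsion of $p$ matters for other reasons. A flat connection kills the real image of the class, so $\mathcal M_{X,p}(\mathbb T)$ is nonempty only for torsion $p$. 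Torsion also makes $P|_{\partial X}$ trivial, so $r_{X,p}$ lands in $\mathcal M_{\partial X}(\mathbb T)$.

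\textbf{Gauge equivariance.} For $\partial X\neq\varnothing$ the mapping torus of $g$ is not closed, so the gauge comparison does not ``yield an integer''; it yields the action of $g|_{\partial X}$ on the fiber. In the fillings model this equivariance is just naturality, including for large gauge transformations in $H^1(X;\Lambda)$. Gluing a filling $W'$ to $(X,g^*A)$ along $\psi$, and to $(X,A)$ along $g|_{\partial X}\circ\psi$, gives closed manifolds isomorphic via $g\cup\mathrm{id}$, hence equal Chern--Weil phases. Do the ``key check'' of your last paragraph this way. Do not check it against an explicit local formula such as $\exp\bigl(-\pi i\int_{\partial X}K(d\phi\wedge a)\bigr)$. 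That formula fails the cocycle identity by $(-1)^{\omega_{\partial X,K}(\gamma_1,\gamma_2)}$, with $\gamma_i=[d\phi_i]\in H^1(\partial X;\Lambda)$, and this sign can be $-1$ even for even $K$ (e.g.\ the hyperbolic form).
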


The second is the Reidemeister--torsion half-density.
\begin{equation}\label{eq:toral-half-density}
\mu_{X,p}\in
\Gamma\!\left(
\Lambda_{X,p},\,|\det(T^*\Lambda_{X,p})|^{1/2}
\right),
\end{equation}
obtained from the determinant-line formalism and its natural normalization.

\begin{prop}\cite[Definition~3.7 and Proposition~3.8]{Galviz2}\label{prop:torsion-half-density}
For each \(p\in \Tors H^2(X;\Lambda)\), the Reidemeister torsion determines a canonical
translation-invariant half-density
\[
\mu_{X,p}\in
\Gamma\!\left(
\Lambda_{X,p},\,|\det(T^*\Lambda_{X,p})|^{1/2}
\right).
\]
\end{prop}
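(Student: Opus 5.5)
The plan is to build $\mu_{X,p}$ from the Reidemeister torsion of $X$ with real (or $\mathfrak t$-valued) coefficients. The first step is to identify the tangent space of $\Lambda_{X,p}$. By Proposition~\ref{prop:boundary-leaves}, $\Lambda_{X,p}$ is a translate of the torus $L_X/(L_X\cap H^1(\partial X;\Lambda))$, so
\[
T\Lambda_{X,p}\cong L_X=\operatorname{Im}(r_X)
\]
at every point, independently of $p$. It therefore suffices to produce one canonical element of $|\det L_X^*|^{1/2}$ and extend it by translation. Translation invariance is then automatic. Indeed, the translation action of $H^1(X;\mathfrak t)$ on $\mathcal M_{X,p}(\mathbb T)$ from Proposition~\ref{prop:torsion-moduli} descends to translations by $L_X$ on the leaf, and these act trivially on the constant bundle $|\det L_X^*|^{1/2}$.

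To produce the element, I would use the long exact sequence of the pair $(X,\partial X)$ in real cohomology tensored with $\mathfrak t$:
\[
0\to H^0(X,\partial X)\to H^0(X)\to H^0(\partial X)\to H^1(X,\partial X)\to H^1(X)\xrightarrow{r_X} H^1(\partial X)\to\cdots.
\]
Using Poincar\'e--Lefschetz duality $H^k(X,\partial X)\cong H^{3-k}(X)^*$ and $H^k(\partial X)\cong H^{2-k}(\partial X)^*$, the sequence is self-dual about the middle term $H^1(\partial X)$. Splitting it at $L_X$ gives a canonical isomorphism
\[
(\det L_X)^{\otimes 2}\cong \det H^\bullet(X;\mathfrak t)\otimes \det H^\bullet(\partial X;\mathfrak t)^{\pm1},
\]
up to the fixed duality conventions. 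The Reidemeister torsion $\tau(X)\in|\det H^\bullet(X;\mathfrak t)|$, computed from a cell structure with the metric on $\mathfrak t$ induced by $K$ (or simply the lattice $\Lambda$), is a canonical density. So is $\tau(\partial X)$. Combining them and taking a square root of the resulting positive density on $L_X^{\otimes 2}$ yields $\mu_X\in|\det L_X^*|^{1/2}$. I would then set $\mu_{X,p}=\mu_X$ transported to each leaf, possibly normalized by a factor depending on $|\Tors H^2(X;\Lambda)|$ or on $|H^0|$ terms, as in the definition of the natural normalization.

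The main obstacle is showing that the result is independent of choices. This means independence of the cell decomposition, which follows from combinatorial invariance of Reidemeister torsion, and independence of bases in the exact sequence, which works because absolute values of determinants cancel under the multiplicativity (Milnor) formula. The delicate point is that the middle splitting really lands in $|\det L_X^*|^{1/2}$ with the correct weight rather than its inverse. That requires checking that the duality pairing makes the sequence isomorphic to its own dual with $L_X\leftrightarrow (H^1(\partial X)/L_X)^*\cong L_X$ via $\omega_{\Sigma,K}$. For this identification I would invoke Proposition~\ref{prop:LX-Lagrangian}. Once that identification is verified, positivity of densities makes the square root unambiguous, and the section is canonical.
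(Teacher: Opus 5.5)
Your reduction to a single element of $|\det L_X^*|^{1/2}$ is fine, and so is the translation-invariance argument. The central step fails, however: the exact sequence of $(X,\partial X)$ does not give $(\det L_X)^{\otimes 2}\cong\det H^\bullet(X)\otimes\det H^\bullet(\partial X)^{\pm1}$.

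Split the sequence at $L_X$ and use $H^k(X,\partial X)\cong H^{3-k}(X)^*$. The left half gives, with all coefficients in $\mathfrak t$,
\[
|\det L_X|\cong|\det H^0(X)|^{-1}\otimes|\det H^1(X)|\otimes|\det H^2(X)|\otimes|\det H^3(X)|^{-1}\otimes|\det H^0(\partial X)|.
\]
The right half is just the dual of the left half under $H^1(\partial X)/L_X\cong L_X^*$, so Proposition~\ref{prop:LX-Lagrangian} adds no information. The line above has the opposite parity in degrees $0$ and $1$ from the line $\det H^\bullet(X)$ that carries the torsion. Combining it with $0\to\ker r_X\to H^1(X)\to L_X\to 0$ gives
\[
|\det H^\bullet(X)|^{1/2}\cong|\det L_X|^{-1/2}\otimes|\det(\ker r_X)|^{-1}\otimes|\det H^0(X)|\otimes|\det H^0(\partial X)|^{-1/2}.
\]
So $\tau(X)^{1/2}$ is a half-density along $L_X$ tensored with a \emph{full density} along $\ker r_X$. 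That is exactly the tangent direction of the fibres of $\mathcal M_{X,p}(\mathbb T)\to\Lambda_{X,p}$. No combination with $\tau(\partial X)$ and no square root removes this factor.

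The closed case shows the problem clearly. There $L_X=0$, and your isomorphism would make $\det H^\bullet(X)$ canonically trivial. In fact $(T_X)^{1/2}$ is a density on $H^1(X;\mathfrak t)$, which is why the paper integrates it over $\mathcal M_{X,p}(\mathbb T)$ in \eqref{eq:toral-closed-partition-function}.

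What is missing is the integral structure together with a fibre integration. You need to push $(T_X)^{1/2}$ forward along the compact fibres of $r_{X,p}\colon\mathcal M_{X,p}(\mathbb T)\to\Lambda_{X,p}$. Equivalently, trivialize the density on $\ker r_X$ using the lattice coming from $H^1(X;\Lambda)$, keeping track of the number of fibre components. You also need to trivialize the $H^0(X)$ and $H^0(\partial X)$ factors using $\Lambda$. Only after this do you obtain a translation-invariant element of $|\det T^*\Lambda_{X,p}|^{1/2}$. Fixing the normalization of this step is what the cited ``natural normalization'' refers to, and for $\partial X=\varnothing$ it reduces to the number $\int_{\mathcal M_{X,p}}(T_X)^{1/2}$.

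As written, your $\mu_X$ is undefined whenever $\ker r_X\neq 0$, for example for closed $X$ with $b_1>0$ or for $(S^1\times S^2)\#(D^2\times S^1)$. The ``correct weight'' issue you flag is therefore not a sign or exponent ambiguity that the Lagrangian property resolves. It is this missing fibre factor.
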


Their tensor product
\(
\sigma_{X,p}\otimes \mu_{X,p}
\)
is the leafwise quantum contribution of the torsion class \(p\). To assemble the full
boundary state, one sums over all torsion classes with the universal cohomological
normalization exponent
\begin{equation}\label{eq:mX-definition}
m_X
:=
\frac14\Bigl(
\dim H^1(X;\mathbb R)
+\dim H^1(X,\partial X;\mathbb R)
-\dim H^0(X;\mathbb R)
-\dim H^0(X,\partial X;\mathbb R)
\Bigr).
\end{equation}
If \(X\) is closed and connected, this reduces to
\[
m_X=\frac12\bigl(b_1(X)-1\bigr).
\]

\begin{prop}\cite[Proposition~3.12]{Galviz2}\label{thm:toral-boundary-vector}
The canonical toral boundary vector of \(X\) is
\begin{equation}\label{eq:toral-boundary-vector}
Z^{\mathrm{CS}}_{\mathbb T,K}(X)
:=
\frac{|\det K|^{m_X}}{\#\,\Tors H^2(X;\Lambda)}
\sum_{p\in \Tors H^2(X;\Lambda)}
\sigma_{X,p}\otimes \mu_{X,p}
\in
\mathcal H_{\mathbb T,K}(\partial X,L_X^{\mathbb R}).
\end{equation}
\end{prop}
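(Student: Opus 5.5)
The statement to be justified is that the normalized sum
\[
Z^{\mathrm{CS}}_{\mathbb T,K}(X)=\frac{|\det K|^{m_X}}{\#\,\Tors H^2(X;\Lambda)}\sum_{p}\sigma_{X,p}\otimes\mu_{X,p}
\]
is a well-defined element of $\mathcal H_{\mathbb T,K}(\partial X,L_X^{\mathbb R})$, and that it is canonical, i.e.\ independent of auxiliary choices. The plan is to show first that each summand lies in a single leafwise summand of \eqref{eq:toral-hilbert-space}, and then that the finite sum does not depend on any choices.

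\textbf{Step 1: polarization.} By Lemma~\ref{lem:lambdaX-LX}, $L_X=L_X^{\mathbb R}\otimes\mathfrak t=\mathcal P_{L_X^{\mathbb R}}$. Proposition~\ref{prop:LX-Lagrangian} shows that $L_X^{\mathbb R}$ is Lagrangian. It is rational because it is the image of $H^1(X;\mathbb Q)\otimes\mathbb R$. Hence $L_X^{\mathbb R}$ is an admissible choice of $L$ in \eqref{eq:real-polarization}.

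\textbf{Step 2: each summand lives on one Bohr--Sommerfeld leaf.} By Proposition~\ref{prop:boundary-leaves}, each $\Lambda_{X,p}$ is a translated leaf of $\mathcal P_{L_X^{\mathbb R}}$. Proposition~\ref{prop:CS-section} gives a section $\sigma_{X,p}$ that is covariantly constant on $\mathcal M_{X,p}(\mathbb T)$.

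\begin{itemize}
\item First I check that $\sigma_{X,p}$ descends to $\Lambda_{X,p}$. The map $r_{X,p}\colon\mathcal M_{X,p}\to\Lambda_{X,p}$ is a surjection of affine tori. Its fibers are cosets of the image of $H^1(X,\partial X;\mathfrak t)$, and these fibers are connected modulo a finite group. Flatness of $\sigma_{X,p}$ along the fibers, together with the gauge invariance of the Chern--Simons action, makes it constant on each fiber. So it defines a flat section of $\mathcal L_{\partial X,K}|_{\Lambda_{X,p}}$.
\item The existence of a nowhere-zero flat section forces the holonomy of $\mathcal L_{\partial X,K}$ along $\Lambda_{X,p}$ to be trivial. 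This is exactly the Bohr--Sommerfeld condition, so $\Lambda_{X,p}\in\mathcal{BS}(\partial X,L_X^{\mathbb R})$.
\item Next I identify $T^*\Lambda_{X,p}$ with $\mathcal P_{L_X}^*$, which holds by translation invariance. Then Proposition~\ref{prop:torsion-half-density} places $\mu_{X,p}$ in the correct half-density bundle.
\end{itemize}

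It follows that $\sigma_{X,p}\otimes\mu_{X,p}\in\Gamma_{\mathrm{flat}}\bigl(\Lambda_{X,p};\mathcal L_{\partial X,K}\otimes|\det\mathcal P^*_{L_X}|^{1/2}\bigr)$. Distinct $p$ may give the same leaf, but the sum is still a finite sum of vectors in the direct sum, since $\Tors H^2(X;\Lambda)$ is finite. The prefactor is a well-defined positive real number once $m_X$ is computed from \eqref{eq:mX-definition}. In the closed connected case, Poincaré--Lefschetz duality gives $b_1(X,\partial X)=b_1$ and $b_0(X,\partial X)=1$, so \eqref{eq:mX-definition} reduces to $m_X=\tfrac12(b_1-1)$.

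\textbf{Step 3: canonicity (main obstacle).} I expect the hardest part to be showing that the leafwise vectors do not depend on the auxiliary choices. These are the basepoint used to trivialize the affine torus $\mathcal M_{X,p}$ in Proposition~\ref{prop:torsion-moduli}, and the cohomology bases used to compute the Reidemeister torsion. I would argue as follows.

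\begin{itemize}
\item Changing the basepoint translates by an element of $H^1(X;\mathfrak t)$. The section $\sigma_{X,p}$ is invariant under this because it is flat. The half-density $\mu_{X,p}$ is invariant because it is translation-invariant.
\item Changing the torsion bases multiplies the torsion by a determinant. In the half-density normalization this determinant is $\pm$ a power of $|\det K|$ that is cancelled by the factor $|\det K|^{m_X}$.
\end{itemize}

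I would verify this cancellation on the long exact sequence of the pair $(X,\partial X)$, tensored with $(\Lambda,K)$. The lattice covolumes contribute $|\det K|^{\frac12\dim}$ in each degree, and the alternating sum of these exponents is exactly the combination of Betti numbers defining $m_X$. Finally, dividing by $\#\,\Tors H^2(X;\Lambda)$ is intrinsic, since it only averages over a canonical finite set. This completes the proposed argument that $Z^{\mathrm{CS}}_{\mathbb T,K}(X)$ is a canonical element of $\mathcal H_{\mathbb T,K}(\partial X,L_X^{\mathbb R})$.
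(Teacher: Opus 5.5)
The paper gives no argument of its own here. It recalls the vector from \cite[Proposition~3.12]{Galviz2} and reads off membership in $\mathcal H_{\mathbb T,K}(\partial X,L_X^{\mathbb R})$ from Propositions~\ref{prop:boundary-leaves}--\ref{prop:torsion-half-density}. Your Steps~1--2 follow that assembly, but your descent step is circular.

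The components of a fibre of $r_{X,p}$ are indexed by the finite group $F=(L_X\cap H^1(\partial X;\Lambda))/r_X(H^1(X;\Lambda))$. Two components differ by a flat shift $A\mapsto A+\delta$ with $r_X(\delta)\in H^1(\partial X;\Lambda)$ but $\delta\notin H^1(X;\Lambda)+\ker r_X$. Such a shift is not a bulk gauge transformation (otherwise the two points would coincide in $\mathcal M_{X,p}(\mathbb T)$), so gauge invariance of the action says nothing about it.

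Put differently, a nowhere-zero flat section on the connected torus $\mathcal M_{X,p}(\mathbb T)$ only shows one thing. The holonomy of $\mathcal L_{\partial X,K}$ along $\Lambda_{X,p}$ is trivial on the finite-index sublattice $r_X(H^1(X;\Lambda))$ of $\pi_1(\Lambda_{X,p})=L_X\cap H^1(\partial X;\Lambda)$, so it is a character of $F$. The section $\sigma_{X,p}$ descends if and only if this character is trivial. That is precisely the Bohr--Sommerfeld condition you then infer from the descended section.

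This is not vacuous. For the orientable twisted $I$-bundle over the Klein bottle with $\Lambda=\mathbb Z$ one gets $F\cong\mathbb Z/2$. A nontrivial character would leave $\Lambda_{X,p}$ with no nonzero flat section at all.

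What is needed is an actual computation. For flat $A$ and closed $\delta$, with $a=A|_{\partial X}$, the first-variation formula changes the action only by the boundary term $\pm\tfrac12\int_{\partial X}K(a\wedge r_X\delta)$. You must show this reproduces the transition function of $\mathcal L_{\partial X,K}$, including any quadratic refinement it carries, for \emph{all} boundary gauge transformations with class in $L_X\cap H^1(\partial X;\Lambda)$, not only those in $r_X(H^1(X;\Lambda))$. This is where isotropy of $L_X$ and evenness of $K$ must be used, and it is not contained in Proposition~\ref{prop:CS-section} as stated.

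Step~3 does not work either. The prefactor $|\det K|^{m_X}$ depends only on $K$ and on Betti numbers. It does not change when you change the bases used to compute the torsion, so it cannot cancel a basis-dependent factor. With integral bases (of $H^\bullet(X;\Lambda)$ modulo torsion), base changes lie in $GL_N(\mathbb Z)$ and contribute $|\det|=1$, so there is nothing to cancel. With arbitrary real bases, no fixed power of $|\det K|$ compensates.

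Your proposed numerical check also fails. An alternating sum of $\tfrac12\dim$ over an exact sequence vanishes, as does one over the degrees of $H^\bullet(X;\mathbb R)$ for closed $X$. Yet already for $X=S^3$ one has $m_X=-\tfrac12$.

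In the paper these are separate matters. Canonicity of $\mu_{X,p}$ is the content of Proposition~\ref{prop:torsion-half-density}. The exponent in \eqref{eq:mX-definition} is not a correction for choices; it is forced by the cylinder normalization (Proposition~\ref{prop:toral-cylinder}) and the gluing compatibility (Proposition~\ref{prop:normalization-exponent}).
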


In other words, the bordism state assigned to $X$ is the normalized sum of the quantum contributions from all torsion sectors. The normalization is not a matter of convention: it is forced by the dimension formula, the cylinder normalization, and the gluing law.

When \(\partial X=\varnothing\), the state space is canonically \(\mathbb C\), and
\eqref{eq:toral-boundary-vector} becomes the closed toral partition function
\begin{equation}\label{eq:toral-closed-partition-function}
Z^{\mathrm{CS}}_{\mathbb T,K}(X)
=
\frac{|\det K|^{m_X}}{\#\,\Tors H^2(X;\Lambda)}
\sum_{p\in \Tors H^2(X;\Lambda)}
\int_{\mathcal M_{X,p}(\mathbb T)}
\sigma_{X,p}\,(T_X(\mathfrak t))^{1/2},
\end{equation}
where \((T_X(\mathfrak t))^{1/2}\) is the translation-invariant density induced by the square
root of Reidemeister--Ray--Singer torsion.

\begin{definition}\label{def:toral-closed-partition-function}
For a closed oriented \(3\)--manifold \(M\), we write $Z^{CS,\mathrm{raw}}_{\mathbb T,K}(M)$ for the scalar \eqref{eq:toral-closed-partition-function}, and call it the closed toral Chern--Simons partition function. This is the raw closed invariant, before the insertion of any extended Maslov anomaly weight. By contrast, $Z^{CS}_{\mathbb T,K}(X,L,n)$ denotes the corrected extended toral Chern--Simons invariant assigned to an extended bordism \((X,L,n)\) as in Theorem~\ref{thm:toral-CS-TQFT}. In particular, for a closed extended \(3\)--manifold \((M,n)\), the symbol $Z^{CS}_{\mathbb T,K}(M,n)$ refers to the corresponding corrected extended scalar.
\end{definition}

The bordism state assignment satisfies two structural properties needed later.
First, the cylinder acts by the identity kernel with the following normalization.

\begin{prop}\cite[Proposition~4.1]{Galviz2}\label{prop:toral-cylinder}
For every closed oriented surface \(\Sigma\),
\[
Z^{\mathrm{CS}}_{\mathbb T,K}(\Sigma\times I)
=
|\det K|^{\frac14\dim H^1(\Sigma;\mathbb R)}\,\mathrm{Id}.
\]
\end{prop}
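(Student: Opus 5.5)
We verify the statement by computing both sides of \eqref{eq:toral-boundary-vector} for the cylinder $X=\Sigma\times I$, viewed as a bordism $\Sigma\to\Sigma$. The boundary is $\partial X=\overline\Sigma\sqcup\Sigma$, and its state space $\mathcal H_{\mathbb T,K}(\overline\Sigma\sqcup\Sigma,L_X^{\mathbb R})$ is identified with $\operatorname{End}\mathcal H_{\mathbb T,K}(\Sigma,L)$ for a chosen rational Lagrangian $L$, via the BKS operators of Proposition~\ref{lem:BKS-operators}. The plan has three steps: identify the Lagrangian and the leaves, show that the unnormalized vector is the identity kernel, and evaluate the normalization constant.

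\textbf{Step 1 (Lagrangian and leaves).} First we identify $L_X$. Since $X$ retracts onto $\Sigma$, the restriction $H^1(X;\mathfrak t)\to H^1(\overline\Sigma;\mathfrak t)\oplus H^1(\Sigma;\mathfrak t)$ is the diagonal. So $L_X$ is the diagonal $\Delta$, which is Lagrangian for $-\omega_{\Sigma,K}\oplus\omega_{\Sigma,K}$, as Proposition~\ref{prop:LX-Lagrangian} requires.

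Next, $H^2(X;\Lambda)\cong H^2(\Sigma;\Lambda)\cong\Lambda$ per component is torsion-free. Hence $\Tors H^2(X;\Lambda)=0$ and only $p=0$ contributes. By Proposition~\ref{prop:torsion-moduli}, the moduli space $\mathcal M_{X,0}$ is $\mathcal M_\Sigma(\mathbb T)$, and its boundary image is the diagonal torus in $\mathcal M_\Sigma\times\mathcal M_\Sigma$.

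\textbf{Step 2 (identity kernel).} The Chern--Simons section $\sigma_{X,0}$ of Proposition~\ref{prop:CS-section} is the tautological pairing section of $\overline{\mathcal L}\boxtimes\mathcal L$ along the diagonal. This holds because the Chern--Simons action of a cylinder over a flat path-independent field is trivial.

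Transporting through the BKS pairing with $L\oplus L$, the diagonal leaf meets each product of Bohr--Sommerfeld leaves $\ell\times\ell'$ only when $\ell=\ell'$ (Proposition~\ref{prop:BS-leaves}). It then pairs the basis vectors $\delta_\ell$ with their duals. The resulting kernel is therefore a multiple of $\sum_\ell \delta_\ell^*\otimes\delta_\ell=\mathrm{Id}$, and the same scalar appears on every summand by translation invariance.

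\textbf{Step 3 (normalization).} Three factors remain to be collected.

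First, the exponent: for $X=\Sigma\times I$ we have $H^1(X)\cong H^1(\Sigma)$ and $H^1(X,\partial X)\cong H^0(\Sigma)$. Using $H^0(X)\cong\mathbb R^{b_0}$ and $H^0(X,\partial X)=0$, formula \eqref{eq:mX-definition} gives
\[
m_X=\tfrac14\bigl(\dim H^1(\Sigma;\mathbb R)+b_0(\Sigma)-b_0(\Sigma)\bigr)=\tfrac14\dim H^1(\Sigma;\mathbb R).
\]

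Second, the torsion count is $\#\Tors=1$.

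Third, the Reidemeister-torsion half-density $\mu_{X,0}$ must integrate, against the BKS pairing of leaves, to exactly $1$. This is the step I expect to be the main obstacle. To handle it, I would compute the torsion of $\Sigma\times I$ relative to the cohomology bases induced from $H^\bullet(\Sigma;\Lambda)$. The torsion of a product with an interval is trivial, so $\mu_{X,0}$ is the Liouville half-density of the lattice-normalized diagonal.

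The remaining task is then to check that the BKS pairing of this half-density with the leaf half-densities on $\ell\times\ell$ yields $1$, rather than a power of $|\det K|$. The lattice covolume factors of $K$ coming from $\omega_{\Sigma,K}$ appear symmetrically in the leaf half-densities and in the diagonal measure, so I expect them to cancel. It should be verified in genus one, where $G_K$ labels the leaves, and then extended multiplicatively over $\Sigma_g$ and over components.

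Combining the three factors gives the stated scalar $|\det K|^{\frac14\dim H^1(\Sigma;\mathbb R)}$ times $\mathrm{Id}$.
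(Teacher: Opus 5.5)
Your Step~1 and the first two factors of Step~3 are correct. $L_X^{\mathbb R}$ is the diagonal, $\Tors H^2(\Sigma\times I;\Lambda)=0$, and $\Lambda_{X,0}$ is the diagonal torus. Also $m_X=\frac14\dim H^1(\Sigma;\mathbb R)$, using $H^1(X,\partial X;\mathbb R)\cong H_2(X;\mathbb R)\cong H^0(\Sigma;\mathbb R)$. Hence $Z^{\mathrm{CS}}_{\mathbb T,K}(\Sigma\times I)=|\det K|^{\frac14\dim H^1(\Sigma;\mathbb R)}\,\sigma_{X,0}\otimes\mu_{X,0}$, and all remaining content lies in identifying $\sigma_{X,0}\otimes\mu_{X,0}$.

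That is exactly the step you leave open (``I expect them to cancel''). Under your own normalizations the cancellation does not occur. The states $\delta_\ell$ are unit vectors on Bohr--Sommerfeld leaves of lattice volume $1$. Since the cylinder has trivial torsion, $\mu_{X,0}$ is the lattice-normalized half-density on the diagonal torus; it is not a ``Liouville'' half-density, since a Lagrangian torus carries no Liouville measure. Neither of these involves $K$, so there is nothing symmetric to cancel. The level enters only once, in the BKS half-form pairing of $\Delta\otimes\mathfrak t$ with $(L\oplus L)\otimes\mathfrak t$. There $\omega_{\Sigma,K}$ identifies $(\Delta\otimes\mathfrak t)/I$ with the dual of $((L\oplus L)\otimes\mathfrak t)/I$, where $I$ is their intersection. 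In lattice bases this pairing is the unimodular cup pairing tensored with $K$, so it contributes $|\det K|^{-g/2}$ in genus $g$.

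You can check this using only unitarity. The vector $\sigma_{X,0}\otimes\mu_{X,0}$ has norm $1$, and $F_{L\oplus L,\,L_X^{\mathbb R}}$ is unitary by Proposition~\ref{lem:BKS-operators}. On the other hand $\|c\sum_\ell\delta_\ell^*\otimes\delta_\ell\|^2=|c|^2|\det K|^{g}$, so $|c|=|\det K|^{-\frac14\dim H^1(\Sigma;\mathbb R)}$. For $K=[k]$ in genus one, a direct computation gives $c=k^{-1/2}$.

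Consequently, if you carry your route through correctly, the BKS transport of $Z^{\mathrm{CS}}_{\mathbb T,K}(\Sigma\times I)$ is a unimodular multiple of $\mathrm{Id}$. The prefactor $|\det K|^{m_X}$ is absorbed by the pairing rather than surviving, so the argument does not reach the stated formula.

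The statement as written is consistent only if it is read in $\mathcal H_{\mathbb T,K}(\partial X,L_X^{\mathbb R})$ itself, with no transport. There $\mathrm{Id}$ denotes the identity (diagonal) kernel: the unit state $\sigma_{X,0}\otimes\mu_{X,0}$ supported on the diagonal Bohr--Sommerfeld leaf. This is the kernel against which the trace in Theorem~\ref{thm:toral-gluing} contracts. With that reading, the proof is just your Step~1 together with the exponent and torsion count, and Step~2 and the third factor of Step~3 should be dropped. If you want the operator reading instead, you must fix the normalizations of $\mu_{X,p}$, of the Bohr--Sommerfeld basis, and of the BKS pairing as in \cite{Galviz2}, and then actually compute $c$. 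As it stands, assuming $c=1$ is both unjustified and inconsistent with the unitarity you are relying on.
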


Second, the theory satisfies the following gluing law.

\begin{theorem}\cite[Theorem~4.5]{Galviz2}\label{thm:toral-gluing}
Let \(X^{\mathrm{cut}}\) be obtained from \(X\) by cutting along a closed oriented surface
\(\Sigma\). Then
\[
Z^{\mathrm{CS}}_{\mathbb T,K}(X)
=
\operatorname{Tr}_{\Sigma}\!\left(
Z^{\mathrm{CS}}_{\mathbb T,K}(X^{\mathrm{cut}})
\right),
\]
where the trace is defined by contraction against the cylinder kernel.
\end{theorem}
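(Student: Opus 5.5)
This statement is quoted from \cite[Theorem~4.5]{Galviz2}, so the plan below reconstructs the argument from the ingredients already recalled: the torsion-sector decomposition, the Chern--Simons sections, the torsion half-densities and the normalization exponent $m_X$. The idea is to reduce the gluing identity to three separate matchings: one for torsion sectors, one for the leafwise sections and half-densities, and one for the normalizing constants.

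\emph{Torsion sectors.} Write $X=X^{\mathrm{cut}}\cup_{\Sigma\sqcup\overline\Sigma}$ for the gluing of the two copies of $\Sigma$ in $\partial X^{\mathrm{cut}}$. Use the Mayer--Vietoris sequence with $\Lambda$ and $\mathfrak t$ coefficients. This relates $\Tors H^2(X;\Lambda)$ to pairs $p'\in\Tors H^2(X^{\mathrm{cut}};\Lambda)$ whose two restrictions to $\Sigma$ agree. It also identifies $\mathcal M_{X,p}(\mathbb T)$ with the fibre product of $\mathcal M_{X^{\mathrm{cut}},p'}(\mathbb T)$ over the diagonal of $\mathcal M_\Sigma(\mathbb T)\times\mathcal M_\Sigma(\mathbb T)$.

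The trace against the cylinder kernel pairs the leaf $\Lambda_{X^{\mathrm{cut}},p'}$ with the diagonal Lagrangian. By Proposition~\ref{prop:boundary-leaves}, the result is either empty or a torus fibred over the flat sectors of $X$. This gives a sector-by-sector decomposition of $\operatorname{Tr}_\Sigma$.

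The image of $H^1(X^{\mathrm{cut}};\Lambda)\oplus$(diagonal) has finite index, and the connecting map has finite cokernel. Together these produce multiplicities relating $\#\Tors H^2(X;\Lambda)$ to $\#\Tors H^2(X^{\mathrm{cut}};\Lambda)$. In these counts, powers of $|\det K|=|G_K|$ appear through the Bohr--Sommerfeld parametrization of Proposition~\ref{prop:BS-leaves}.

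\emph{Leafwise matching.} In each sector I would check two things.

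1. The Chern--Simons sections glue: $\sigma_{X,p}$ is the pairing of $\sigma_{X^{\mathrm{cut}},p'}$ with itself along $\Sigma$. This follows from additivity of the Chern--Weil action under gluing of $4$-dimensional fillings, which is how $\mathcal L_{\Sigma,K}$ was defined. Orientation reversal on $\overline\Sigma$ gives the dual line, so the pairing is canonical.

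2. The half-densities glue: the pairing of $\mu_{X^{\mathrm{cut}},p'}$ with the cylinder half-density equals $\mu_{X,p}$ up to an explicit constant. This is the multiplicativity of Reidemeister torsion for the short exact sequence of cochain complexes $0\to C^\bullet(X)\to C^\bullet(X^{\mathrm{cut}})\to C^\bullet(\Sigma)\to0$. The torsion of the long exact cohomology sequence, computed with lattice-normalized bases, contributes a factor given by the index of the lattice images, namely a power of $|\det K|$ and a ratio of torsion orders.

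\emph{Constants (main obstacle).} Finally, I would verify that the prefactors agree. Comparing $m_X$ with $m_{X^{\mathrm{cut}}}$ by Euler characteristics of the long exact sequences gives
\[
m_{X^{\mathrm{cut}}}-m_X=\tfrac14\dim H^1(\Sigma;\mathbb R).
\]
This is exactly the exponent of the cylinder normalization in Proposition~\ref{prop:toral-cylinder}, and it is cancelled by defining the trace through contraction against that normalized kernel.

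What remains is to show that the torsion-order ratio from the half-density gluing cancels against the ratio $\#\Tors H^2(X;\Lambda)/\#\Tors H^2(X^{\mathrm{cut}};\Lambda)$ together with the fibre counts from the sector decomposition. This finite-group bookkeeping, keeping track of integral lattices inside the real Mayer--Vietoris sequence, is where I expect the real work to lie. I would handle it by choosing $\Lambda$-integral bases throughout and computing every index as a determinant of a restriction map.
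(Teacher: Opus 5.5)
\paragraph{There is a genuine gap.} The paper does not reprove this statement. It imports it from \cite[Theorem~4.5]{Galviz2}, together with the exponent compatibility recorded in Proposition~\ref{prop:normalization-exponent}. Your architecture is the natural one: torsion sectors, leafwise gluing via $0\to C^\bullet(X)\to C^\bullet(X^{\mathrm{cut}})\xrightarrow{r_+-r_-}C^\bullet(\Sigma)\to 0$, then constants. However, the step you identify as the crux rests on a false identity.

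Write $\delta_1=r_+-r_-\colon H^1(X^{\mathrm{cut}};\mathbb R)\to H^1(\Sigma;\mathbb R)$. Lefschetz duality gives $4m_Y=2\bigl(b_1(Y)-b_0(Y)\bigr)+\chi(Y)$ for every compact oriented $3$--manifold $Y$. The long exact sequence gives $\chi(X^{\mathrm{cut}})=\chi(X)+\chi(\Sigma)$. But $b_1-b_0$ is not additive: $(b_1-b_0)(X^{\mathrm{cut}})-(b_1-b_0)(X)=\operatorname{rk}\delta_1-b_0(\Sigma)$. Hence
\[
m_X=m_{X^{\mathrm{cut}}}-\tfrac14\dim H^1(\Sigma;\mathbb R)+\tfrac12\dim\operatorname{coker}\delta_1 ,
\]
so $m_{X^{\mathrm{cut}}}-m_X$ is not determined by $\Sigma$ alone. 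Cutting along a torus shows this concretely:
\begin{itemize}
\item For the genus-one Heegaard splitting of $S^3$, $m_{S^3}=-\tfrac12$ and $m_{D^2\times S^1}=0$, so $m_{X^{\mathrm{cut}}}-m_X=+\tfrac12$.
\item For $T^3$ cut along $T^2\times\{\mathrm{pt}\}$, $m_{T^3}=1$ and $m_{T^2\times I}=\tfrac12$, so $m_{X^{\mathrm{cut}}}-m_X=-\tfrac12$.
\end{itemize}
Your identity holds exactly when $\delta_1$ is surjective; for closed $X$ this means $L_{X^{\mathrm{cut}}}$ is transverse to the diagonal. This is also exactly when your ``finite index'' claim holds for the image of $H^1(X^{\mathrm{cut}};\Lambda)\oplus H^1(\Sigma;\Lambda)$ in $H^1(\Sigma;\Lambda)^2$. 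So the plan silently treats only transverse gluings.

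\paragraph{What is missing is the non-transverse case, which includes the most basic trace.} Take $X=\Sigma_g\times S^1$ and $X^{\mathrm{cut}}=\Sigma_g\times I$.
\begin{itemize}
\item Both torsion groups are trivial, so there is no torsion-order bookkeeping at all.
\item By \eqref{eq:toral-closed-partition-function} and Lemma~\ref{lem:toral-torsion-integral}, $Z^{\mathrm{CS}}_{\mathbb T,K}(\Sigma_g\times S^1)=|G_K|^{g}$, with $m_X=g$.
\item Proposition~\ref{prop:toral-cylinder} gives $Z^{\mathrm{CS}}_{\mathbb T,K}(\Sigma_g\times I)=|G_K|^{g/2}\,\mathrm{Id}$.
\end{itemize}
Your identity would instead force $m_X=0$. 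Once the trace normalization cancels the cylinder factor, the remaining $|G_K|^{g}=|\det K|^{\frac12\dim\operatorname{coker}\delta_1}$ comes from the trace of the identity over the $|G_K|^g$ Bohr--Sommerfeld leaves. In other words, it comes from pairing $\sigma_{X^{\mathrm{cut}},p'}\otimes\mu_{X^{\mathrm{cut}},p'}$ along a positive-dimensional intersection of $\Lambda_{X^{\mathrm{cut}},p'}$ with the diagonal.

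In general you must do two things. First, show that this intersection pairing, measured with $\omega_{\Sigma,K}$, produces exactly $|\det K|^{\frac12\dim\operatorname{coker}\delta_1}$ times $K$-independent lattice indices. Only then can you show that those indices cancel against the torsion counts. As written, you spend the entire $m$-shift on the cylinder factor. That leaves the powers of $|\det K|$ you found in the sector and half-density steps with nothing to cancel against. The torsion-order ratios you assign the remaining work to are independent of $K$, so they cannot close this gap.
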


The matching of the powers of \(|\det K|\) in this gluing formula is encoded by the
cohomological exponent identity underlying the normalization of the theory.

\begin{prop}\cite[Corollary~4.3 and Proposition~4.4]{Galviz2}\label{prop:normalization-exponent}
The normalization exponent \(m_X\) is compatible with cutting and gluing, in the sense
that the powers of \(|\det K|\) appearing in the boundary-state formula, the cylinder
formula, and the gluing formula match exactly under decomposition along closed oriented
surfaces.
\end{prop}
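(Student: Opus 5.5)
This statement is cited from \cite[Corollary~4.3 and Proposition~4.4]{Galviz2}. To check it directly, I would reduce it to an additivity identity for the exponent \(m_X\) of \eqref{eq:mX-definition} and derive that identity from Mayer--Vietoris sequences. Suppose \(X\) is glued from \(X^{\mathrm{cut}}\) along two copies of \(\Sigma\). The boundary-state formula \eqref{eq:toral-boundary-vector} contributes \(|\det K|^{m_{X^{\mathrm{cut}}}}\). Contracting against the cylinder kernel of Proposition~\ref{prop:toral-cylinder} means inverting the identity normalization, so the trace contributes \(|\det K|^{-\frac14\dim H^1(\Sigma;\mathbb R)}\). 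Finite-dimensional pairings of leafwise half-densities may add further factors of \(|G_K|=|\det K|\) from counting Bohr--Sommerfeld leaves; by Theorem~\ref{thm:toral-quantization-dimension} these are governed by \(\dim H^1(\Sigma)\) and \(\dim H^0(\Sigma)\). The statement therefore amounts to an identity of the form
\[
m_X = m_{X^{\mathrm{cut}}} - \tfrac14\dim H^1(\Sigma;\mathbb R) + \tfrac12\dim H^0(\Sigma;\mathbb R) + c,
\]
with a correction \(c\) that must be determined by tracking the torsion count \(\#\Tors H^2\) as well.

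The first step is to compute the alternating sums. I would write \(m_X\) as one quarter of \(-\chi\)-type expressions: \(h^1+h^1_{\mathrm{rel}}-h^0-h^0_{\mathrm{rel}}\). Then I use the long exact sequences of the pairs \((X,\partial X)\) and \((X^{\mathrm{cut}},\partial X^{\mathrm{cut}})\), together with Poincar\'e--Lefschetz duality \(h^i_{\mathrm{rel}}=h^{3-i}\). Since \(\chi(X)=\tfrac12\chi(\partial X)\) for compact \(3\)-manifolds, the truncated sum \(h^0-h^1\) can be rewritten as \(h^3-h^2+\tfrac12\chi(\partial X)\), which makes every term local. The Mayer--Vietoris sequence for \(X=X^{\mathrm{cut}}\cup_{\Sigma\sqcup\Sigma}(\Sigma\times I)\) then compares \(h^i(X)\) with \(h^i(X^{\mathrm{cut}})\) and \(h^i(\Sigma)\). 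The defect is exactly the rank of the connecting maps, and the same ranks govern how Lagrangians \(L_X\) versus \(L_{X^{\mathrm{cut}}}\) intersect the diagonal.

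The second step, and the main obstacle, is that the exponents only match once the rank defects are paired with the other side. These are the dimensions of \(L_{X^{\mathrm{cut}}}\cap\Delta_\Sigma\) and of the kernel of the gluing map on \(H^1\). On the quantum side the same defects appear in two places: in the number of Bohr--Sommerfeld leaves met by the translated leaves \(\Lambda_{X,p}\) of Proposition~\ref{prop:boundary-leaves}, and in the change \(\#\Tors H^2(X;\Lambda)\) versus \(\#\Tors H^2(X^{\mathrm{cut}};\Lambda)\). I would first try to show that the torsion-count ratio and the leaf-intersection count combine into \(|\det K|\) raised to exactly that rank defect. The argument would use the universal coefficient theorem with \(\Lambda\) coefficients and the fact that \(K:\Lambda\to\Lambda^*\) has cokernel \(G_K\). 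Once that is established, the remaining powers are purely Betti-number bookkeeping, and they cancel by the Mayer--Vietoris computation above. The degenerate cases, namely disconnected \(\Sigma\) and closed \(X\) where \(m_X=\tfrac12(b_1-1)\), serve as checks: for example, gluing two solid tori into \(S^3\) or \(S^1\times S^2\) should reproduce the known normalizations.
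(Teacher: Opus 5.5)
The paper gives no proof of this statement; it is quoted from \cite[Corollary~4.3 and Proposition~4.4]{Galviz2}. Your sketch therefore has to stand on its own, and as written it does not. The target identity is left with an undetermined $c$, and the decisive step is only announced.

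\textbf{The Betti-number part.} This half can be made exact, and doing so shows that your ansatz is mis-shaped. From $h^i(X,\partial X)=b_{3-i}(X)$ and $\chi(X)=\tfrac12\chi(\partial X)$ one gets
\[
m_X=\tfrac12\bigl(b_1(X)-b_0(X)\bigr)+\tfrac18\chi(\partial X).
\]
In particular $m_{\Sigma\times I}=\tfrac14 b_1(\Sigma)$, which is the cylinder case. Mayer--Vietoris for $X=X^{\mathrm{cut}}\cup(\Sigma\times I)$ then gives
\[
m_X=m_{X^{\mathrm{cut}}}-\tfrac14\dim H^1(\Sigma;\mathbb R)+\tfrac12\,d,
\qquad
d:=\dim\operatorname{coker}\bigl(H^1(X^{\mathrm{cut}};\mathbb R)\oplus H^1(\Sigma;\mathbb R)\to H^1(\Sigma;\mathbb R)^{\oplus 2}\bigr).
\]
Here the map is the difference of restrictions to the two copies of $\Sigma$; when $\partial X=\varnothing$ one has $d=\dim(L_{X^{\mathrm{cut}}}\cap\Delta_\Sigma)$. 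There is no $\dim H^0(\Sigma)$ term: the $b_0$ contributions from the degree-$0$ part of the sequence cancel exactly against $\chi(\Sigma)$.

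Three checks:
\begin{enumerate}
\item $\Sigma_g\times S^1$ from $\Sigma_g\times I$ has $d=2g$, and $m$ goes from $g/2$ to $g$.
\item $S^3$ from two solid tori has $d=0$, and $m$ goes from $0$ to $-\tfrac12$.
\item $S^1\times S^2$ has $d=1$ and $m=0$.
\end{enumerate}
So, after the factor $|\det K|^{-\frac14\dim H^1(\Sigma;\mathbb R)}$ from the cylinder contraction, the quantum pairing must produce exactly $|\det K|^{d/2}$. That is half the defect, not ``exactly that rank defect'' as you state.

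\textbf{The quantum step.} This step is the real content of the proposition beyond bookkeeping, and your proposal only says ``I would first try to show'' it. The mechanism you suggest also cannot work as described. Since $\#\Tors H^2(X;\Lambda)=|\Tors H^2(X;\mathbb Z)|^n$ is independent of $K$, the ratio of torsion counts carries no power of $|\det K|$. Likewise, the universal coefficient theorem with $\Lambda$-coefficients never sees $\operatorname{coker}K=G_K$.

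All $K$-dependence of the glued expression enters through $\omega_{\Sigma,K}=\omega_\Sigma\otimes K$, in two places:
\begin{enumerate}
\item which translated leaves satisfy the Bohr--Sommerfeld condition;
\item the normalization of the pairing of the leafwise objects $\sigma_{X^{\mathrm{cut}},p}\otimes\mu_{X^{\mathrm{cut}},p}$ along the clean intersection of the leaves $\Lambda_{X^{\mathrm{cut}},p}$ of Proposition~\ref{prop:boundary-leaves} with the diagonal. This intersection is an $nd$-dimensional subtorus when $\partial X=\varnothing$.
\end{enumerate}

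Your missing lemma therefore splits into two separate statements:
\begin{enumerate}
\item the $K$-dependent factor of that pairing is exactly $|\det K|^{d/2}$;
\item the $K$-independent lattice indices (numbers of components of these intersections, i.e.\ degrees of the restriction maps between flat-moduli tori) cancel the ratio of $\#\Tors H^2(X^{\mathrm{cut}};\Lambda)$ and $\#\Tors H^2(X;\Lambda)$.
\end{enumerate}
Neither is established in your proposal. The first is where the actual geometric-quantization computation of \cite{Galviz2} would have to be supplied.
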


At this point all of the toral data needed later are in place: the state spaces $\mathcal H_{\mathbb T, K}(\Sigma, L)$, the boundary vectors $Z^{\mathrm{CS}}_{\mathbb T,K}(X)$, the closed partition function, and the BKS comparison operators. The final point is that these fit together into an extended TQFT once the projective BKS anomaly is absorbed by the $K$-twisted weighting convention.

\begin{theorem}\cite[Theorem~4.8]{Galviz2}\label{thm:toral-CS-TQFT}
Let \(\mathbb T=\mathfrak t/\Lambda\cong U(1)^n\) be a compact torus, and let \(K:\Lambda\times\Lambda\to\mathbb Z\) be an even, integral, nondegenerate symmetric bilinear form. Adopt the \(K\)-twisted extended bordism convention: the underlying objects are pairs \((\Sigma,L)\) with \(L\subset H^1(\Sigma;\mathbb R)\) a rational Lagrangian, while the boundary-weight correction is governed by the toral Maslov cocycle \(\mu_K\). Here \(n\) denotes the ordinary Walker--Turaev extended weight; the \(K\)-twist is implemented by the factor \(e^{\frac{\pi i}{4}\sigma(K)n}\).

Then the assignments
\[
(\Sigma,L)\longmapsto \mathcal H_{\mathbb T,K}(\Sigma,L),
\qquad
X\longmapsto Z^{\mathrm{CS}}_{\mathbb T,K}(X),
\]
and
\begin{equation}\label{eq:weighted-toral-assignment}
(X,L,n)\longmapsto
Z^{\mathrm{CS}}_{\mathbb T,K}(X,L,n)
:=
e^{\frac{\pi i}{4}\sigma(K)\,n}\,
F_{L\,L_X^{\mathbb R}}
\!\bigl(Z^{\mathrm{CS}}_{\mathbb T,K}(X)\bigr)
\in \mathcal H_{\mathbb T,K}(\partial X,L)
\end{equation}
define a unitary extended \((2+1)\)-dimensional topological quantum field theory. If \(M\) is closed and connected, then for the closed extended manifold \((M,n)\) one has
\[
Z^{\mathrm{CS}}_{T,K}(M,n)
=
e^{\frac{\pi i}{4}\sigma(K)n}
Z^{\mathrm{CS,raw}}_{T,K}(|M|).
\]
In particular, for the zero-weight extended manifold \((M,0)\), this reduces
to the raw closed partition function \eqref{eq:toral-closed-partition-function}.
\end{theorem}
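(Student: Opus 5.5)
The statement is Theorem~\ref{thm:toral-CS-TQFT}, which is cited from \cite{Galviz2}. Within this paper it is an assembly of the earlier results rather than a new computation, so the plan is to check the extended TQFT axioms in the Walker--Turaev formalism one by one, using only the propositions recalled above.

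\textbf{Objects and morphisms.} To an extended surface $(\Sigma,L)$ we assign $\mathcal H_{\mathbb T,K}(\Sigma,L)$. By Theorem~\ref{thm:toral-quantization-dimension} this space is finite dimensional. Disjoint unions go to tensor products, because the phase space \eqref{eq:toral-phase-space}, the prequantum line, and the Bohr--Sommerfeld leaves (Proposition~\ref{prop:BS-leaves}) all factor over components. To an extended bordism $(X,L,n)$ we assign the vector \eqref{eq:weighted-toral-assignment}. This is well defined for two reasons:
\begin{itemize}
\item Proposition~\ref{thm:toral-boundary-vector} places $Z^{\mathrm{CS}}_{\mathbb T,K}(X)$ in $\mathcal H(\partial X,L_X^{\mathbb R})$, since Proposition~\ref{prop:boundary-leaves} makes every torsion leaf parallel to $L_X$.
\item Proposition~\ref{lem:BKS-operators} then transports this vector unitarily to the chosen $L$.
\end{itemize}

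\textbf{Identity.} For the cylinder $\Sigma\times I$ with weight $0$ we need the identity morphism. Proposition~\ref{prop:toral-cylinder} gives the identity only up to the factor $|\det K|^{\frac14\dim H^1(\Sigma)}$. I would absorb this factor into the pairing used to identify morphisms with vectors, and this is exactly the convention under which the trace in Theorem~\ref{thm:toral-gluing} is defined.

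\textbf{Composition.} Gluing $X_1$ and $X_2$ along $(\Sigma,L)$, I would do the following in order:
\begin{enumerate}
\item Move each of $Z(X_1)$ and $Z(X_2)$ to the common Lagrangian $L$ on $\Sigma$ using BKS operators.
\item Contract the two vectors.
\item Apply Theorem~\ref{thm:toral-gluing}, with the matching of $|\det K|$ powers supplied by Proposition~\ref{prop:normalization-exponent}.
\end{enumerate}
This produces $F_{L' L_X}Z(X)$ only up to products of BKS cocycles. By \eqref{eq:BKS-composition} these equal $e^{\frac{\pi i}{4}\sigma(K)\mu_\Sigma(L_{X_1},L,L_{X_2})}$. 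Walker's extended gluing rule sets $n=n_1+n_2-\mu_\Sigma(L_{X_1},L,L_{X_2})$, and the weight factor $e^{\frac{\pi i}{4}\sigma(K)n}$ cancels the phase exactly, because $\mu_K=\sigma(K)\mu_\Sigma$ by \eqref{eq:muK-definition}. Changing $L$ while keeping $n$ fixed is handled the same way, by the cocycle identity.

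\textbf{Main obstacle.} I expect the hardest step to be checking that the sign conventions make this cancellation exact. The orientation and order of the three Lagrangians in the Maslov index must match the order in \eqref{eq:BKS-composition}, and self-gluing (the trace) also has to be covered. I would first fix the conventions on the torus $\Sigma_1$, comparing against the rank-one case of \cite{Galviz1}. I would then reduce to that case by diagonalizing $K$ over $\mathbb R$, since the anomaly depends only on $\sigma(K)$.

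\textbf{Remaining properties.}
\begin{itemize}
\item Unitarity follows from unitarity of the BKS operators and orientation-reversal compatibility of the sections $\sigma_{X,p}$ (Proposition~\ref{prop:CS-section}).
\item For closed connected $M$, the boundary space is $\mathbb C$ and all BKS operators are trivial, which gives $Z(M,n)=e^{\frac{\pi i}{4}\sigma(K)n}Z^{\mathrm{raw}}(M)$.
\end{itemize}
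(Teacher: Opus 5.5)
Your outline is the route the paper itself indicates. The paper does not reprove this theorem; it imports it from \cite[Theorem~4.8]{Galviz2} after recalling exactly the ingredients you assemble: the cylinder normalization with gluing defined by contraction against the cylinder kernel, the matching of $|\det K|$ exponents, and the projective BKS law \eqref{eq:BKS-composition} with $\mu_K=\sigma(K)\mu_\Sigma$, whose anomaly is to be absorbed by the weight $e^{\frac{\pi i}{4}\sigma(K)n}$. Your closed-case formula also follows immediately from the definition.

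Two points need tightening, both in the composition step. For compositions along $\Sigma$ where $X_1,X_2$ have further boundary, $\mu_\Sigma(L_{X_1},L,L_{X_2})$ is not defined; the correct Maslov term is the Wall triple of reduced Lagrangians on $\Sigma$, as in Lemma~\ref{lem:walker-weight}. Extracting it requires showing that BKS transport is compatible with the partial contraction along $\Sigma$, and the one-surface cocycle alone does not give that. Finally, the sign check you defer carries real content, not just cosmetics: it decides whether the weight factor cancels the BKS phase or doubles it.
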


This completes the toral Chern--Simons theory needed for the equivalence with the Reshetikhin--Turaev theory.

\section{General Abelian Reshetikhin–Turaev Theory}~\label{section2}
In this section, we recall the Reshetikhin–Turaev theory on the algebraic side in the precise form needed for the equivalence theorem. Our aim is to extract from the pointed modular category attached to the discriminant finite quadratic module $(G_K,q_K)$ the three pieces of structure that will later be matched with the toral Chern--Simons theory: the closed surgery invariant, its finite-quadratic normalization, and the boundary state spaces and bordism maps of the Maslov anomaly correction for the extended theory. We begin with the general Abelian surgery formalism in order to place the finite case in context, then pass to the finite quadratic module determined by the even lattice $(\Lambda,K)$, and finally record the boundary theory used in Section~\ref{section4}.

\subsection{Reshetikhin–Turaev invariants for general Abelian groups}
\label{sec:3.1}
We begin with the closed surgery formula on the Reshetikhin–Turaev side, first in the broader Abelian surgery formalism and then in the finite quadratic-module case relevant for Section \ref{section4}.

\begin{definition}\label{def:finite-quadratic-module}
Let $G$ be a finite Abelian group. A map $q:G\to  U(1)$ is called \emph{quadratic} if the associated function
\begin{equation}\label{eq:bq-general-revised}
b_q(x,y):=q(x+y)\,q(x)^{-1}q(y)^{-1}
\end{equation}
is a symmetric bicharacter on $G$. The pair $(G,q)$ is called a
\emph{finite quadratic module} if $b_q$ is nondegenerate.
\end{definition}

\begin{prop}\label{prop:pointed-category-general}
Let $(G,q)$ be a finite quadratic module. Then there is an associated pointed ribbon
category $\mathcal C(G,q)$ whose simple objects are indexed by elements of $G$, with
tensor product
\[
R_x\otimes R_y \simeq R_{x+y},\qquad \mathbf \mathrm{id} =R_0,\qquad R_x^*\simeq R_{-x},
\]
and twist
\[
\theta_x=q(x)\,\mathrm{id}_{R_x}.
\]
Its Hopf--link pairing is the bicharacter $b_q$, so the Hopf--link matrix is
\begin{equation}\label{eq:hopf-matrix-general}
S^{\mathrm{Hopf}}_{x,y}=b_q(x,y).
\end{equation}
In particular, $\mathcal C(G,q)$ is modular if and only if $b_q$ is nondegenerate.
\end{prop}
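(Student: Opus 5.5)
We will realize $\mathcal C(G,q)$ as the category $\mathrm{Vect}_G^{\omega}$ of finite-dimensional $G$-graded complex vector spaces, equipped with an abelian $3$-cocycle $(\omega,c)$ whose associated quadratic form is $q$. This is the Eilenberg--MacLane correspondence.

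\textbf{Step 1: existence of the abelian cocycle.} By Eilenberg--MacLane, the assignment $(\omega,c)\mapsto\bigl(x\mapsto c(x,x)\bigr)$ induces an isomorphism $H^3_{ab}(G;U(1))\cong\mathrm{Quad}(G,U(1))$. We choose $(\omega,c)$ in the class corresponding to $q$. For $G=G_K=\Lambda^*/K\Lambda$ we can be explicit. Choose a basis in which $\Lambda^*/K\Lambda\cong\bigoplus\mathbb Z_{d_i}$ (Smith normal form). Because $K$ is even, $q_K$ lifts to a genuine bicharacter $\beta$ on $G$ with $\beta(x,x)=q(x)$; this uses the evenness to handle the $2$-primary part. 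We then take $\omega=1$ and $c=\beta$, so the category is $\mathrm{Vect}_G$ with the trivial associator and braiding $c_{R_x,R_y}=\beta(x,y)\,\tau$. If the lift fails on $2$-torsion, we fall back on the general Eilenberg--MacLane cocycle.

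\textbf{Step 2: ribbon structure and the listed formulas.} The simple objects are the one-dimensional spaces $R_x$ concentrated in degree $x$. The tensor product satisfies $R_x\otimes R_y\simeq R_{x+y}$ by grading, with unit $R_0$ and dual $R_x^*\simeq R_{-x}$. We take the twist $\theta_x=c(x,x)=q(x)$. The balancing axiom $\theta_{x+y}=\theta_x\theta_y\,c(x,y)c(y,x)$ then reduces to the identity $q(x+y)=q(x)q(y)b_q(x,y)$ together with $c(x,y)c(y,x)=b_q(x,y)$. The latter is the standard consequence of the hexagon axioms for abelian cocycles, and in the bicharacter case it is immediate from polarization. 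The condition $\theta_{x^*}=\theta_x^*$ holds because $q(-x)=q(x)$. Finally, the Hopf link evaluates on $R_x,R_y$ to the double braiding $c(y,x)c(x,y)=b_q(x,y)$, since all quantum dimensions equal $1$ (the pivotal structure is chosen so that $\dim R_x=1$). This gives \eqref{eq:hopf-matrix-general}.

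\textbf{Step 3: modularity.} The category is fusion because it has finitely many simples. It is modular precisely when the matrix $(b_q(x,y))$ is invertible. By character orthogonality, this holds exactly when $b_q$ is nondegenerate: in that case $y\mapsto b_q(\cdot,y)$ is an isomorphism $G\to\widehat G$, and $S^2=|G|\,C$ with $C$ the charge-conjugation matrix. Conversely, if $b_q(\cdot,y_0)\equiv1$ for some $y_0\neq0$, then the columns indexed by $0$ and $y_0$ coincide.

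We expect the main obstacle to be the twist/braiding compatibility at $2$-torsion, where $q$ is not determined by $b_q$. This is exactly why the Eilenberg--MacLane isomorphism, or the evenness of $K$, is invoked rather than simply setting $c=b_q^{1/2}$.
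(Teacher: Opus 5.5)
Your overall route is correct and amounts to the standard pointed construction that the paper's proof simply cites without detail. You realize $\mathcal C(G,q)$ as $G$-graded vector spaces with an abelian $3$-cocycle $(\omega,c)$ whose Eilenberg--MacLane class corresponds to $q$. You use the spherical structure with $\dim R_x=1$, so that $\theta_x=c(x,x)=q(x)$, read the Hopf link as the monodromy $c(x,y)c(y,x)=b_q(x,y)$, and obtain modularity from character orthogonality. That much matches the paper.

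However, the explicit branch of Step 1 is false, and so is your closing claim that evenness of $K$ sidesteps the $2$-torsion problem. Evenness does \emph{not} make $q_K$ the diagonal of a bicharacter on $G_K$. For $K=[2]$ one has $G_K=\mathbb Z_2$ and $q_K(1)=e^{\pi i/2}=i$, whereas any bicharacter on $\mathbb Z_2$ satisfies $\beta(1,1)^2=\beta(0,1)=1$. More generally, for $K=[k]$ with $k$ even, $q_k(1)^k=e^{\pi i}=-1$, while $\beta(1,1)^k=\beta(k,1)=1$. So in every rank-one case the associator of $\mathcal C(\mathbb Z_k,q_k)$ cannot be trivialized; for $k=2$ this is the semion, with $F$-symbol $-1$. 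A model with $\omega=1$, $c=\beta$ therefore does not exist. What evenness of $K$ actually buys is only that $q_K$ is well defined on $\Lambda^*/K\Lambda$.

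Your fallback is thus the generic case, not an exception. Delete the explicit branch and argue with a general Eilenberg--MacLane cocycle for arbitrary $(G,q)$; that is in any case what a proposition stated for an arbitrary finite quadratic module requires. With that change the argument is complete.

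One further point you use silently. The ribbon condition $\theta_{x^*}=\theta_x^*$ forces $q(-x)=q(x)$, and $\mathrm{Quad}(G,U(1))$ in the Eilenberg--MacLane theorem consists only of such functions. The paper's definition of a finite quadratic module asks only that $b_q$ be a nondegenerate symmetric bicharacter. That also admits functions like $q(x)=e^{2\pi i(x^2+x)/3}$ on $\mathbb Z_3$, where $b_q(x,y)=e^{4\pi i xy/3}$ is nondegenerate but $q(2)=1\neq q(1)$, and for such $q$ no ribbon twist $\theta=q$ can exist. You should state explicitly that you use the standard convention $q(-x)=q(x)$. It holds for $q_K$, since $x^\top K^{-1}x$ is invariant under $x\mapsto -x$.
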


\begin{proof}
This is the standard pointed construction attached to a finite quadratic module. In the
pointed case every simple object has quantum dimension $1$, the braiding and twist are
determined by the bicharacter $b_q$ and quadratic function $q$, and modularity is
equivalent to the nondegeneracy of the Hopf--link matrix
$S^{\mathrm{Hopf}}_{x,y}=b_q(x,y)$. See, in the Abelian setting relevant here,
\cite[Section~5.4]{Galviz1}; for the extended RT formalism used later, compare
\cite[Section~IV.1--IV.3]{Turaev1994}.
\end{proof}

\begin{example}\label{ex:cyclic-general}
Let $k\in 2\mathbb Z_{>0}$ and let $G=\mathbb Z_k$. Define
\begin{equation}\label{eq:qk-general-revised}
q_k(x):=\exp\!\Bigl(\frac{\pi i}{k}x^2\Bigr),\qquad x\in\mathbb Z_k.
\end{equation}
Then $q_k$ is well defined, because for any lift $x\in\mathbb Z$ one has
\[
(x+k)^2-x^2=2kx+k^2\in 2k\mathbb Z
\]
when $k$ is even. The associated bicharacter is
\begin{equation}\label{eq:bk-general-revised}
b_k(x,y)=q_k(x+y)\,q_k(x)^{-1}q_k(y)^{-1}
       =\exp\!\Bigl(\frac{2\pi i}{k}xy\Bigr),
\end{equation}
which is nondegenerate. Hence $(\mathbb Z_k,q_k)$ is a finite quadratic module, and the
corresponding pointed modular category will be denoted by $\mathcal C_k:=\mathcal C(\mathbb Z_k,q_k).$
\end{example}

We now record the closed pointed RT surgery formula in the cyclic case in a form that will
generalize immediately to arbitrary finite quadratic modules.

\begin{prop}\label{prop:cyclic-RT-surgery}
Let $L=L_1\cup\cdots\cup L_m\subset S^3$ be an oriented framed link with symmetric linking
matrix
\[
\mathcal L_L=(\mathcal L_{ij})_{1\le i,j\le m},
\qquad
\mathcal L_{ij}=\operatorname{lk}(L_i,L_j)\ (i\neq j),
\qquad
\mathcal L_{ii}=\operatorname{fr}(L_i),
\]
and let $M_L$ be the closed oriented $3$--manifold obtained by surgery on $L$.
For a coloring $\vec x=(x_1,\dots,x_m)\in(\mathbb Z_k)^m$, the RT evaluation in
$\mathcal C_k$ is
\begin{equation}\label{eq:RT-eval-cyclic-revised}
\langle L_{\vec x}\rangle
=
\prod_{1\le i<j\le m} b_k(x_i,x_j)^{\mathcal L_{ij}}
\prod_{i=1}^m q_k(x_i)^{\mathcal L_{ii}}
=
\exp\!\Bigl(\frac{\pi i}{k}\,{}^t\vec x\,\mathcal L_L\,\vec x\Bigr).
\end{equation}
Consequently, the closed pointed RT invariant is
\begin{equation}\label{eq:finite-RT-cyclic-revised}
Z^{\mathrm{RT,raw}}_{\mathcal C_k}(M_L)
=
k^{-1/2}\,
A_+(k)^{\frac{-m-\sigma(L)}{2}}\,
A_-(k)^{\frac{-m+\sigma(L)}{2}}\,
\sum_{\vec x\in(\mathbb Z_k)^m}
\exp\!\Bigl(\frac{\pi i}{k}\,{}^t\vec x\,\mathcal L_L\,\vec x\Bigr),
\end{equation}
where
\begin{equation}\label{eq:Apm-cyclic-revised}
A_\pm(k):=
\sum_{s\in\mathbb Z_k}\exp\!\Bigl(\pm\frac{\pi i}{k}s^2\Bigr)
\end{equation}
and $\sigma(L)=\operatorname{sign}(\mathcal L_L)$.
\end{prop}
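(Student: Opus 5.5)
The proof has two parts. First I evaluate the colored framed link in $\mathcal C_k$ as a product of elementary braiding and twist contributions. Then I insert this into the standard Reshetikhin--Turaev surgery normalization for a pointed modular category, where every quantum dimension equals $1$.

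\emph{Link evaluation.} Since $\mathcal C_k$ is pointed, each simple object $R_x$ is invertible and every morphism space $\Hom(R_y,R_y)$ is one-dimensional. The Reshetikhin--Turaev functor applied to a colored framed link diagram is therefore a scalar, and it changes multiplicatively under local moves. I will deform the diagram of $L$, using crossing changes and framing changes, into a diagram of the unlinked, zero-framed unknots, tracking the scalar as I go.

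\begin{itemize}
\item For $i\neq j$, changing a crossing between $L_i$ and $L_j$ changes the value by the double braiding $c_{R_{x_j},R_{x_i}}c_{R_{x_i},R_{x_j}}$. This acts on the simple object $R_{x_i+x_j}$ by the scalar $\theta_{x_i+x_j}\theta_{x_i}^{-1}\theta_{x_j}^{-1}=b_k(x_i,x_j)$, or its inverse, depending on the sign of the crossing.
\item Unlinking $L_i$ from $L_j$ requires crossing changes whose signed count is $\mathcal L_{ij}$. Together they contribute $b_k(x_i,x_j)^{\mathcal L_{ij}}$.
\item Self-crossing changes within a single component cost nothing, because $R_{x_i}\otimes R_{x_i}$ is again simple and the self-linking is absorbed into the framing.
\item Removing the framing of $L_i$ contributes $\theta_{x_i}^{\mathcal L_{ii}}=q_k(x_i)^{\mathcal L_{ii}}$.
\end{itemize}

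The resulting unlink has value $\prod_i\dim R_{x_i}=1$, which gives the first equality of \eqref{eq:RT-eval-cyclic-revised}. The second equality follows from $b_k(x,y)=e^{2\pi i xy/k}$ and $q_k(x)=e^{\pi i x^2/k}$: summing over $i<j$ counts each off-diagonal pair once, and the quadratic form ${}^t\vec x\,\mathcal L_L\vec x$ counts it twice, so the two expressions agree. The exponent is well defined modulo $2\pi i$ because $k$ is even, exactly as in Example~\ref{ex:cyclic-general}.

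\emph{Surgery formula.} I use the standard Reshetikhin--Turaev normalization
\[
Z(M_L)=\mathcal D^{-1-m}\Delta_+^{-b_-}\Delta_-^{-b_+}\cdot\mathcal D^{\,m}\ \text{-corrected sum},
\]
or more precisely the version in \cite[Section~II]{Turaev1994},
\[
Z(M_L)=\Delta_-^{\sigma(L)}\mathcal D^{-\sigma(L)-m-1}\sum_{\vec x}\prod_i\dim(x_i)\,\langle L_{\vec x}\rangle .
\]
Here $\Delta_\pm=\sum_x\theta_x^{\pm1}\dim(x)^2=A_\pm(k)$ and $\mathcal D^2=\sum_x\dim(x)^2=k$. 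All dimensions equal $1$. The Gauss sums satisfy $A_+(k)A_-(k)=|A_+(k)|^2=k=\mathcal D^2$. Using this relation, I rewrite the powers of $\mathcal D$ as
\[
\mathcal D^{-\sigma-m-1}=k^{-1/2}\,(A_+A_-)^{(-\sigma-m)/2}.
\]
Combined with $\Delta_-^{\sigma}$, the exponents become $\tfrac{-m-\sigma}{2}$ on $A_+$ and $\tfrac{-m+\sigma}{2}$ on $A_-$, which is \eqref{eq:finite-RT-cyclic-revised}.

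\emph{Main obstacle.} The delicate step is fixing conventions consistently: which of $\Delta_\pm$ pairs with $b_\pm$, and the choice of square root $\mathcal D$ compatible with $A_+A_-=k$. I will pin these down by checking the formula on two basic cases. The empty link must give $S^3$ with value $k^{-1/2}=\mathcal D^{-1}$. The $\pm1$-framed unknot must also return $S^3$, which forces the ratio $A_\pm/\mathcal D$ to cancel correctly. These two checks force the stated exponents, and Kirby-move invariance then holds by the standard Reshetikhin--Turaev argument \cite{Reshetikhin:1991}.
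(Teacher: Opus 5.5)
Your proof is correct and follows essentially the paper's route: evaluate the colored link by the pointed ribbon rules, then specialize the standard RT surgery normalization. The only difference is that you derive the normalization explicitly from Turaev's $\Delta_-^{\sigma}\mathcal D^{-\sigma-m-1}$ form using $\mathcal D=\sqrt k$ and $A_+(k)A_-(k)=k$, whereas the paper cites the standard formula and sketches Kirby invariance.

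Two cleanups are needed, neither of which affects the argument. First, delete your first, garbled normalization display: it pairs $\Delta_+$ with $b_-$ and $\Delta_-$ with $b_+$, so it would fail your own $+1$-framed unknot check, while the Turaev form you give next is the right one. Second, a self-crossing change is harmless not because $R_{x_i}\otimes R_{x_i}$ is simple, but because its double braiding $b_k(x_i,x_i)=q_k(x_i)^2$ is exactly the twist factor of the accompanying change by $2$ in blackboard framing.
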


\begin{proof}
The link evaluation is immediate from the pointed ribbon rules:
\[
\langle L_{\vec x}\rangle
=
\prod_{i<j}\exp\!\Bigl(\frac{2\pi i}{k}\mathcal L_{ij}x_ix_j\Bigr)
\prod_i\exp\!\Bigl(\frac{\pi i}{k}\mathcal L_{ii}x_i^2\Bigr),
\]
which combines into \eqref{eq:RT-eval-cyclic-revised} because
\[
{}^t\vec x\,\mathcal L_L\,\vec x
=
\sum_i \mathcal L_{ii}x_i^2+2\sum_{i<j}\mathcal L_{ij}x_ix_j.
\]
The normalized surgery expression
\eqref{eq:finite-RT-cyclic-revised} is the standard pointed RT surgery formula in this
Abelian case. Invariance under Kirby
moves is the usual one: a first Kirby move contributes the one-component Gauss sum
$A_\pm(k)$, which is cancelled by the normalization, while a second Kirby move is handled
by the usual change-of-variables argument in $(\mathbb Z_k)^m$.
\end{proof}

The same argument extends without change to any finite quadratic module.

\begin{prop}\label{prop:finite-RT-general}
Let $(G,q)$ be a finite quadratic module, and let $\mathcal C(G,q)$ be the associated
pointed modular category. For an oriented framed surgery link $L\subset S^3$ with $m$
components and linking matrix $\mathcal L_L$, define
\begin{equation}\label{eq:gauss-general-revised}
A_\pm(G,q):=\sum_{x\in G} q(x)^{\pm1}.
\end{equation}
Then the closed pointed RT invariant of $M_L$ is
{\small\begin{equation}\label{eq:finite-RT-general-revised}
Z^{\mathrm{RT,raw}}_{\mathcal C(G,q)}(M_L)
=
|G|^{-1/2}\,
A_+(G,q)^{\frac{-m-\sigma(L)}{2}}\,
A_-(G,q)^{\frac{-m+\sigma(L)}{2}}\,
\sum_{\vec x\in G^m}
\prod_{1\le i<j\le m} b_q(x_i,x_j)^{\mathcal L_{ij}}
\prod_{i=1}^m q(x_i)^{\mathcal L_{ii}}.
\end{equation}}
This quantity depends only on the oriented homeomorphism type of $M_L$.
\end{prop}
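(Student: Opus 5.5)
The plan is to show invariance under the Kirby calculus. By Lickorish--Wallace and Kirby's theorem, any two framed surgery presentations of the same oriented closed $3$--manifold are related by isotopy, by the first Kirby move (stabilization), and by the second Kirby move (handle slide). It therefore suffices to check that the right-hand side of \eqref{eq:finite-RT-general-revised} is unchanged by each move.

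First I rewrite the summand, exactly as in the proof of Proposition~\ref{prop:cyclic-RT-surgery}. Since $b_q$ is a symmetric bicharacter and $q(nx)=q(x)^{n^2}$ for $n\in\mathbb Z$, the summand is the value of a $G$-valued quadratic form $Q_{\mathcal L}(\vec x)$ determined by the linking matrix. Indeed, $\prod_{i<j}b_q(x_i,x_j)^{\mathcal L_{ij}}\prod_i q(x_i)^{\mathcal L_{ii}}$ depends only on $\mathcal L_L$ and transforms naturally under integral changes of basis:
\[
Q_{{}^tP\mathcal L P}(\vec x)=Q_{\mathcal L}(P\vec x),\qquad P\in GL_m(\mathbb Z).
\]
This identity is checked by expanding $q$ of a sum using $b_q$ and comparing exponents, using $q(x)^2=b_q(x,x)$. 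Isotopy preserves $\mathcal L_L$, so it is trivially handled.

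For the handle slide of $L_i$ over $L_j$, the new linking matrix is ${}^tP\mathcal L_LP$ with $P=I+E_{ji}$ an elementary matrix. The number of components $m$ is unchanged, and the signature $\sigma(L)$ is unchanged by Sylvester's law. The sum over $\vec x\in G^m$ is invariant because $\vec x\mapsto P\vec x$ is a bijection of $G^m$. Hence the whole expression is unchanged.

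For stabilization, adding an unknot with framing $\pm1$ unlinked from $L$ replaces $\mathcal L_L$ by $\mathcal L_L\oplus(\pm1)$, so $m\mapsto m+1$ and $\sigma\mapsto\sigma\pm1$. The Gauss sum then factors as the old sum times $\sum_{x\in G}q(x)^{\pm1}=A_\pm(G,q)$. For $+1$ the prefactor changes by $A_+^{-1}A_-^{0}$, since the exponents shift by $(-1-1)/2=-1$ and $(-1+1)/2=0$. This exactly cancels the new factor, and the $-1$ case is symmetric.

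The one point needing care is that $A_\pm(G,q)\neq0$, so that the normalization makes sense. I would prove that $|A_+(G,q)|^2=|G|$ by computing
\[
\sum_{x,y}q(x)\overline{q(y)}=\sum_{y,z}q(z)\,b_q(y,z)
\]
with $x=y+z$, and then using nondegeneracy of $b_q$ so that the sum over $y$ vanishes unless $z=0$. This is the main step I expect to require nondegeneracy. Finally, the fractional powers $A_\pm^{(\cdot)/2}$ need a fixed branch. I would choose square roots once and for all, noting that $m\pm\sigma(L)$ is always even, since $\sigma\equiv m\pmod 2$ is the signature of a nondegenerate-or-not $m\times m$ form. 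If $\mathcal L_L$ is degenerate, I would instead count $m-\sigma=b_++2b_0+b_-$ parity-consistently, which still makes all exponents integers. With this, invariance holds exactly and not just up to sign.
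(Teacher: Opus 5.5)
Your argument is essentially the paper's: the first Kirby move is absorbed because a distant $\pm1$-framed unknot contributes exactly $A_\pm(G,q)$, which the prefactor cancels, and a handle slide replaces $\mathcal L_L$ by ${}^tP\mathcal L_LP$ with $\vec x\mapsto P\vec x$ a bijection of $G^m$. Your computation $|A_+(G,q)|^2=|G|$ usefully supplies the nonvanishing of $A_\pm$, which the paper takes for granted.

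Your parity remark is wrong, though harmlessly so. Modulo $2$ one has $\sigma\equiv\operatorname{rank}\mathcal L_L$, not $m$; indeed $m-\sigma=2b_-+b_0$. So for odd nullity the exponents are genuine half-integers, for example for the $0$-framed unknot presenting $S^1\times S^2$. Invariance is still exact. Fix square roots $s_\pm$ of $A_\pm$ once and for all. The first Kirby move then shifts one exponent by exactly $-1$, and handle slides change neither $m$ nor $\sigma$. Taking $s_-=\overline{s_+}$ reproduces the paper's normalization $|G|^{-(m+1)/2}\,(A_+/|G|^{1/2})^{-\sigma}$.
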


\begin{proof}
Since $\mathcal C(G,q)$ is pointed, every simple object has quantum dimension $1$, so the RT evaluation of a colored surgery link is exactly the product of Hopf--link pairings and twist factors displayed in \eqref{eq:finite-RT-general-revised}. The first Kirby move adjoins a distant $\pm1$--framed unknot and therefore multiplies the surgery sum by the Gauss factor $A_\pm(G,q)$; the chosen normalization cancels this contribution. The second Kirby move corresponds to an integral change of basis in the surgery presentation, and the sum is invariant under the induced relabelling of $G^m$ because $b_q$ is a bicharacter. Thus the expression is a closed $3$--manifold invariant.
\end{proof}

\begin{remark}\label{rem:fourier-S}
On the torus state space $\mathcal V^{\mathrm{RT}}_{\mathcal C(G,q)}(T^2,\lambda)\cong \mathbb C[G]$,
the normalized modular $S$--operator is the finite Fourier transform
\[
S(e_x)=|G|^{-1/2}\sum_{y\in G} b_q(x,y)\,e_y,
\]
up to the usual sign convention. In the cyclic case this is exactly the discrete Fourier transform used in the rank--one theory; compare \cite[Section 4.3 and Section~5.3]{Galviz1}.
\end{remark}

\medskip

We now pass from the finite modular setting to the broader quadratic surgery formalism on locally compact Abelian groups. This discussion is included for conceptual completeness and for comparison with the Abelian surgery integrals of Mattes--Polyak--Reshetikhin \cite{Mattes}; it is \emph{not} the input used later in the boundary TQFT comparison of Section \ref{section4}.

\begin{definition}\label{def:lca-quadratic-data}
Let $G$ be a locally compact Abelian group equipped with a fixed Haar measure $dg$, and
let $\theta:G\longrightarrow U(1)$ be a continuous quadratic function, meaning that
\begin{equation}\label{eq:Btheta-general-revised}
B_\theta(x,y):=\theta(x+y)\,\theta(x)^{-1}\theta(y)^{-1}
\end{equation}
is a continuous symmetric bicharacter on $G$.
For an oriented framed link $L=L_1\cup\cdots\cup L_m\subset S^3$ with linking matrix
$\mathcal L_L$, the corresponding \emph{quadratic Abelian surgery functional} is defined by
\begin{equation}\label{eq:raw-integral-general-revised}
Z^{\mathrm{raw}}_{(G,\theta)}(L)
:=
\int_{G^m}
\left(
\prod_{1\le i<j\le m} B_\theta(g_i,g_j)^{\mathcal L_{ij}}
\prod_{i=1}^m \theta(g_i)^{\mathcal L_{ii}}
\right)
\,d^m g,
\qquad d^m g:=dg_1\cdots dg_m,
\end{equation}
whenever the integral is defined in the ordinary or oscillatory sense.
\end{definition}

\begin{definition}\label{def:lca-kirby-normalization}
Let $U_\pm$ denote a distant $\pm1$--framed unknot. The associated one-component Gauss
factors are
\begin{equation}\label{eq:alpha-pm-general-revised}
\alpha_\pm(G,\theta):=\int_G \theta(s)^{\pm1}\,ds.
\end{equation}
If $\mathcal L_L$ has $m_\pm$ positive/negative eigenvalues in a stable diagonalization,
define
\begin{equation}\label{eq:Kirby-norm-general-revised}
\tau_{(G,\theta)}(M_L)
:=
\alpha_+(G,\theta)^{-m_+}\alpha_-(G,\theta)^{-m_-}
\,Z^{\mathrm{raw}}_{(G,\theta)}(L),
\end{equation}
provided all quantities exist.
\end{definition}

\begin{theorem}\label{thm:compact-abelian-surgery}
Let $G$ be a compact Abelian group equipped with Haar probability measure, and let
$\theta:G\to U(1)$ be a continuous quadratic function whose associated bicharacter
$B_\theta$ is nondegenerate. Assume moreover that
$\alpha_\pm(G,\theta)\neq 0$. Then the quantity
$\tau_{(G,\theta)}(M_L)$ of \eqref{eq:Kirby-norm-general-revised} is well defined and
depends only on the oriented homeomorphism type of the closed $3$--manifold $M_L$.
\end{theorem}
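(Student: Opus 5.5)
The plan is to check invariance under the Kirby calculus. By Kirby's theorem, two framed links give oriented homeomorphic surgery manifolds exactly when they are related by isotopy, by the first Kirby move (adding or removing a distant $\pm1$--framed unknot), and by the second Kirby move (handle slides). Isotopy preserves the linking matrix $\mathcal L_L$. So it is enough to show three things: $Z^{\mathrm{raw}}_{(G,\theta)}(L)$ is a well-defined absolutely convergent integral; it is invariant under handle slides; and under a first Kirby move it picks up exactly the factor $\alpha_\pm(G,\theta)$ absorbed by the normalization \eqref{eq:Kirby-norm-general-revised}. Well-definedness is immediate. Since $G$ is compact with Haar probability measure and the integrand is a continuous $U(1)$-valued function on $G^m$, the integral converges absolutely. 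The hypothesis $\alpha_\pm\neq0$ makes the normalization meaningful.

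\textbf{Step 1: quadratic form.} Write the integrand as $\Theta_{\mathcal L}(\vec g)=\prod_{i<j}B_\theta(g_i,g_j)^{\mathcal L_{ij}}\prod_i\theta(g_i)^{\mathcal L_{ii}}$. Using the identities $\theta(ng)=\theta(g)^{n^2}$ and $\theta(g+h)=\theta(g)\theta(h)B_\theta(g,h)$, I will show that $\Theta_{\mathcal L}(\vec g)$ is the $U(1)$-valued quadratic form on $G^m$ attached to $\mathcal L$. The first identity follows by induction from $\theta(0)=1$, $\theta(-g)=\theta(g)$ and bilinearity of $B_\theta$. From this one reads off the equivariance property
\[
\Theta_{\,{}^tP\mathcal L P}(\vec g)=\Theta_{\mathcal L}(P\vec g),\qquad P\in GL_m(\mathbb Z),
\]
where $P$ acts on $G^m$ through integer row operations.

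\textbf{Step 2: handle slides.} A handle slide of $L_i$ over $L_j$ replaces $\mathcal L$ by ${}^tP\mathcal L P$ with $P=I\pm E_{ij}$. The map $\vec g\mapsto P\vec g$ is a continuous automorphism of $G^m$ with continuous inverse, and any automorphism of a compact group preserves normalized Haar measure. So Step 1 gives $Z^{\mathrm{raw}}$ invariant. A handle slide does not change the signature or the size of $\mathcal L$, so $m_\pm$ is unchanged and $\tau$ is invariant. Orientation reversal of a component corresponds to $P=\operatorname{diag}(\pm1)$ and is handled the same way.

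\textbf{Step 3: first Kirby move.} Adding a distant $\pm1$--framed unknot makes $\mathcal L$ block diagonal $\mathcal L\oplus(\pm1)$. The integral factors by Fubini into $Z^{\mathrm{raw}}(L)\,\alpha_\pm(G,\theta)$, while $m_\pm$ increases by one. The extra factor therefore cancels, and $\tau$ is invariant.

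The main obstacle is Step 1: the careful check that the product formula is the honest quadratic form, so that it transforms covariantly under $GL_m(\mathbb Z)$. This is where the diagonal terms need $\theta(ng)=\theta(g)^{n^2}$, and where the cross terms need $B_\theta(g,h)^{n}=B_\theta(ng,h)$. I would first verify this for a single elementary matrix $I+E_{ij}$ by expanding $\theta(g_i+g_j)$ directly, since handle slides only need elementary $P$.

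Nondegeneracy of $B_\theta$ is not needed for invariance itself. It is only relevant to guarantee $\alpha_\pm\neq0$, which is assumed anyway, so I would note that it plays no further role.
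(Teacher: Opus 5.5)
Your route is the paper's: Kirby's theorem, handle slides as a $GL_m(\mathbb Z)$ change of variables on $G^m$, and the first move via Fubini. Your Haar-measure justification (a continuous automorphism of a compact group preserves normalized Haar measure) is the right one, whereas the paper's "because $B_\theta$ is a bicharacter" is not.

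There is, however, a genuine gap in Step 1. You take $\theta(-g)=\theta(g)$ as given, but the hypotheses only ask that $B_\theta$ be a symmetric bicharacter. From $\theta(0)=1$ and $B_\theta(g,-g)=B_\theta(g,g)^{-1}$ one gets only
\[
\theta(g)\theta(-g)=B_\theta(g,g).
\]
Hence $\psi(g):=\theta(g)\theta(-g)^{-1}=\theta(g)^2B_\theta(g,g)^{-1}$ is a character of $G$, and it can be nontrivial. Redoing your elementary-matrix check without evenness, for $P\vec g=(g_1,g_1+g_2,g_3,\dots)$ you get
\[
\Theta_{\,{}^tP\mathcal L P}(\vec g)=\Theta_{\mathcal L}(P\vec g)\,\psi(g_1)^{\mathcal L_{12}},
\]
so the equivariance breaks.

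This is not just a gap in the argument: the statement as written is false. Take $G=\mathbb Z/4$ and $\theta(x)=\exp\bigl(\tfrac{\pi i}{4}(x^2+2x)\bigr)$. Then:
\begin{itemize}
\item $B_\theta(x,y)=i^{xy}$, which is nondegenerate;
\item $\psi(x)=(-1)^x$;
\item $\alpha_\pm=\tfrac12\neq 0$.
\end{itemize}
The linking matrices $\begin{pmatrix}1&1\\1&1\end{pmatrix}$ and $\begin{pmatrix}4&2\\2&1\end{pmatrix}$ differ by one handle slide. Both present $S^1\times S^2$, and both have $m_+=1$, $m_-=0$. The first raw integral is $\int\theta(x+y)\,dx\,dy=\alpha_+=\tfrac12$. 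The second is $\tfrac1{16}\sum_{x,y}(-1)^{x(y+1)}\theta(y)=\tfrac14\bigl(\theta(1)+\theta(3)\bigr)=0$. So $\tau=1$ for one presentation and $\tau=0$ for the other.

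The fix is to add the hypothesis that $\theta$ is even, $\theta(-g)=\theta(g)$; equivalently $\theta(ng)=\theta(g)^{n^2}$, or $B_\theta(g,g)=\theta(g)^2$. This holds for $q_k$ and $q_K$, the only cases used later. Under it, your argument is complete and is exactly the paper's proof made explicit. The paper's assertion that "the integrand transforms by the induced automorphism of $G^m$" silently relies on the same evenness.
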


\begin{proof}
The proof is the usual Kirby-calculus argument. Isotopy invariance is built into the ribbon
evaluation. Under a handle slide, the linking matrix changes by an integral change of basis,
and the integrand in \eqref{eq:raw-integral-general-revised} transforms by the induced
automorphism of $G^m$; Haar measure is invariant under this change of variables because
$B_\theta$ is a bicharacter. Under a first Kirby move, adjoining a distant $\pm1$--framed
unknot multiplies the integral by $\alpha_\pm(G,\theta)$, and the normalization in
\eqref{eq:Kirby-norm-general-revised} cancels this factor. Hence
$\tau_{(G,\theta)}(M_L)$ is invariant under both Kirby moves.
\end{proof}

\begin{remark}\label{rem:normalization-warning}
There are two normalizations in play, and it is important not to confuse them.

\smallskip

\noindent
(i) For a \emph{finite} quadratic module $(G,q)$, the pointed RT invariant is the modular
category invariant \eqref{eq:finite-RT-general-revised}; this is the normalization used in
Sections~\ref{Surgery-formulas}--4.

\smallskip

\noindent
(ii) For a general \emph{compact or noncompact} Abelian group, the quantity
$\tau_{(G,\theta)}$ is an Abelian surgery functional defined from Haar integration.
In the finite case it coincides with the corresponding finite sum only after one fixes a
compatible choice of measure normalization. Since the later equivalence theorem uses only
the finite pointed modular category attached to $(G_K,q_K)$, we shall henceforth use the
RT normalization in the finite case and reserve the Haar-integral notation for the broader
compact and noncompact discussion.
\end{remark}

\begin{remark}\label{rem:noncompact-case}
If $G$ is noncompact, the integral in \eqref{eq:raw-integral-general-revised} is generally
oscillatory rather than absolutely convergent. In the model case $G=\mathbb R$ with
$\theta(x)=e^{ix^2}$, one recovers the Mattes--Polyak--Reshetikhin oscillatory Gaussian
integrals. When the surgery matrix is nondegenerate these are honest oscillatory Gaussians;
in degenerate situations one obtains distributions only under the usual kernel-compatibility
condition \cite{Mattes}.
\end{remark}

\subsection{Surgery formulas and finite quadratic modules}
\label{Surgery-formulas}

We now fix the finite quadratic data that control both the closed toral
Chern--Simons partition function and the closed Reshetikhin--Turaev invariant. It is the natural language in which the closed equivalence theorem will be stated.

Let \(M=M_L\) be a closed connected oriented \(3\)--manifold obtained by integral
surgery on an oriented framed \(m\)-component link
\(
L=L_1\cup\cdots\cup L_m\subset S^3
\),
with linking matrix
\begin{equation}\label{eq:linking-matrix-closed-prelim}
\mathcal L=(\mathcal L_{ij})\in M_m(\mathbb Z),
\qquad
\mathcal L_{ii}=\operatorname{fr}(L_i),
\qquad
\mathcal L_{ij}=\operatorname{lk}(L_i,L_j)\quad (i\neq j).
\end{equation}
We write
\begin{equation}\label{eq:sigma-rho-nu-closed-prelim}
\sigma=\sigma(\mathcal L),
\qquad
\rho=\operatorname{rank}(\mathcal L),
\qquad
\nu=\operatorname{null}(\mathcal L)=m-\rho=b_1(M).
\end{equation}
Choose an integral change of basis \(U\in GL_m(\mathbb Z)\) such that
\begin{equation}\label{eq:L-block-form-closed-prelim}
U^\top\mathcal L U
=
\begin{pmatrix}
\mathcal L_{\mathrm{reg}} & 0\\
0 & 0
\end{pmatrix},
\qquad
\mathcal L_{\mathrm{reg}}\in M_\rho(\mathbb Z),
\end{equation}
with \(\mathcal L_{\mathrm{reg}}\) nondegenerate over \(\mathbb Q\). Recall from Section~\ref{sec:3.1} that the even lattice \((\Lambda,K)\) determines the finite
quadratic module
\[
G_K=\Lambda^*/K\Lambda,
\qquad
q_K([x])=\exp\!\bigl(\pi i\,x^\top K^{-1}x\bigr),
\qquad
\Omega_K(a,b)=\frac{q_K(a+b)}{q_K(a)q_K(b)},
\]
and hence the pointed modular category \(\mathcal C(G_K,q_K)\); compare the standard pointed construction in Reshetikhin--Turaev theory \cite[Chapter~I--II]{Turaev1994}.

For an \(m\)-tuple
\(
a=(a_1,\dots,a_m)\in G_K^m
\),
choose lifts
\(
x_i\in \Lambda^*
\)
of the classes \(a_i\), and write
\(
x=(x_1,\dots,x_m)\in (\Lambda^*)^m
\).
We define
\begin{equation}\label{eq:Q-L-K-closed-prelim}
Q_{\mathcal L,K}(a)
\equiv
\frac12\,x^\top(\mathcal L\otimes K^{-1})x
\pmod 1.
\end{equation}
The evenness of \(K\) ensures that this quantity is independent of the chosen lifts.

\begin{lemma}\label{lem:QLK-well-defined-closed-prelim}
The class \(Q_{\mathcal L,K}(a)\in \mathbb Q/\mathbb Z\) defined by
\eqref{eq:Q-L-K-closed-prelim} is well defined, i.e.\ independent of the chosen lifts
\(x_i\in\Lambda^*\) of the elements \(a_i\in G_K\).
\end{lemma}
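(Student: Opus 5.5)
The plan is a direct computation: change each lift by an element of $K\Lambda$, expand the quadratic expression, and check that every correction term lies in $\mathbb Z$. Two lifts of $a_i\in G_K$ differ by $K\lambda_i$ with $\lambda_i\in\Lambda$, so it suffices to compare $x$ with $x'=x+K\lambda$, where $\lambda=(\lambda_1,\dots,\lambda_m)\in\Lambda^m$ and $K$ acts componentwise. Here $K$ is viewed as the injective map $\Lambda\to\Lambda^*$. Then $K^{-1}$ is the induced rational form on $\Lambda^*\otimes\mathbb Q$, given by $x^\top K^{-1}y:=\langle x,K^{-1}y\rangle$.

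Writing the quadratic form as $\frac12\sum_{i,j}\mathcal L_{ij}\,x_i^\top K^{-1}x_j$ and expanding, one gets
\[
\tfrac12 x'^\top(\mathcal L\otimes K^{-1})x'-\tfrac12 x^\top(\mathcal L\otimes K^{-1})x
=\sum_{i,j}\mathcal L_{ij}\,\langle x_i,\lambda_j\rangle+\tfrac12\sum_{i,j}\mathcal L_{ij}\,K(\lambda_i,\lambda_j).
\]
The cross terms were combined using the symmetry of $\mathcal L$ and of $K$, together with $K^{-1}K\lambda_j=\lambda_j$. The first sum is an integer, since $\mathcal L_{ij}\in\mathbb Z$ and $\langle x_i,\lambda_j\rangle\in\mathbb Z$ because $x_i\in\Lambda^*$.

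The second sum is the only place where a half-integer could appear, so it is the step that needs care. For $i\ne j$, the terms $(i,j)$ and $(j,i)$ are equal and pair up to give $\mathcal L_{ij}K(\lambda_i,\lambda_j)\in\mathbb Z$. Each diagonal term is $\frac12\mathcal L_{ii}K(\lambda_i,\lambda_i)$, which is an integer because $K$ is even, i.e. $K(\lambda,\lambda)\in2\mathbb Z$. The framings $\mathcal L_{ii}$ are arbitrary integers, so evenness of $K$, rather than of $\mathcal L$, is exactly the hypothesis needed here.

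Hence the total difference lies in $\mathbb Z$, and $Q_{\mathcal L,K}(a)\in\mathbb Q/\mathbb Z$ is independent of the lifts. As a consistency check, for $m=1$ and $\mathcal L=(1)$ the argument recovers the well-definedness of $q_K$ itself.
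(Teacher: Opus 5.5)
Your proof is correct and follows essentially the same route as the paper. Both shift each lift by $K\lambda_i$ and split the change into the cross term $x^\top(\mathcal L\otimes I)\lambda$, which is integral since $x_i\in\Lambda^*$, and $\tfrac12\lambda^\top(\mathcal L\otimes K)\lambda$, whose off-diagonal part is integral because $K$ is integral and whose diagonal part is integral because $K$ is even.
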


\begin{proof}
Suppose that \(x_i\) is replaced by \(x_i+K\lambda_i\), where
\(\lambda_i\in\Lambda\), and write
\(
\lambda=(\lambda_1,\dots,\lambda_m)\in\Lambda^m
\).
Then the corresponding change in
\(
\frac12\,x^\top(\mathcal L\otimes K^{-1})x
\)
is
\[
x^\top(\mathcal L\otimes I)\lambda
+
\frac12\,\lambda^\top(\mathcal L\otimes K)\lambda.
\]
The first term is integral because \(x_i\in\Lambda^*\), \(\lambda_j\in\Lambda\), and
\(\mathcal L_{ij}\in\mathbb Z\). Expanding the second term gives
\[
\frac12\,\lambda^\top(\mathcal L\otimes K)\lambda
=
\sum_{i<j}\mathcal L_{ij}\,K(\lambda_i,\lambda_j)
+
\frac12\sum_i \mathcal L_{ii}\,K(\lambda_i,\lambda_i).
\]
The off-diagonal contribution is integral because
\(K(\Lambda,\Lambda)\subset\mathbb Z\) and \(\mathcal L_{ij}\in\mathbb Z\).
The diagonal contribution is integral because \(K\) is even on \(\Lambda\), so
\(K(\lambda_i,\lambda_i)\in 2\mathbb Z\) for all \(\lambda_i\in\Lambda\).
Hence the total variation lies in \(\mathbb Z\), and
\(Q_{\mathcal L,K}(a)\) is well defined modulo \(1\).
\end{proof}

We now state the closed surgery formula for the pointed modular category
\(\mathcal C(G_K,q_K)\). In the general modular-category setting it is the standard Reshetikhin--Turaev surgery prescription, reduced here to a pointed
category, see for example \cite[Chapter~II, Chapter~IV]{Turaev1994}.

\begin{prop}\label{prop:RT-closed-surgery-GK}
Let \(M_L\) be presented by surgery on the framed link \(L\) with linking matrix
\(\mathcal L\), and write \(\sigma=\sigma(\mathcal L)\). Define the quadratic Gauss sums
\begin{equation}\label{eq:toral-closed-invt}
p_\pm(K):=\sum_{u\in G_K} q_K(u)^{\pm1}.
\end{equation}
Then the \emph{raw} closed Reshetikhin--Turaev surgery scalar associated with the pointed
modular category \(\mathcal C(G_K,q_K)\) is
\begin{equation}\label{eq:RT-closed-surgery-n}
Z^{\mathrm{RT,raw}}_{\mathcal C(G_K,q_K)}(M_L)
=
|G_K|^{-1/2}\,
p_+(K)^{\frac{-m-\sigma}{2}}\,
p_-(K)^{\frac{-m+\sigma}{2}}
\sum_{a\in G_K^m}
\exp\!\bigl(2\pi i\,Q_{\mathcal L,K}(a)\bigr).
\end{equation}
\end{prop}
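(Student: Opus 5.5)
The plan is to reduce the statement to Proposition~\ref{prop:finite-RT-general}, applied with $(G,q)=(G_K,q_K)$. That proposition already gives the normalized pointed surgery formula, with Kirby invariance, for an arbitrary finite quadratic module. Two things remain: check that $(G_K,q_K)$ is a finite quadratic module, and rewrite the colored link evaluation as $\exp(2\pi i\,Q_{\mathcal L,K}(a))$.

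First I will verify the quadratic module structure. For lifts $x,y\in\Lambda^*$ one has
\[
q_K([x+y])\,q_K([x])^{-1}q_K([y])^{-1}=\exp\bigl(2\pi i\,x^\top K^{-1}y\bigr)=\Omega_K([x],[y]).
\]
This is a well-defined symmetric bicharacter on $G_K$: replacing $y$ by $y+K\lambda$ changes the exponent by $x(\lambda)\in\mathbb Z$. Well-definedness of $q_K$ itself is the case $m=1$, $\mathcal L=(1)$ of Lemma~\ref{lem:QLK-well-defined-closed-prelim}, and it uses evenness of $K$. For nondegeneracy, suppose $x^\top K^{-1}y\in\mathbb Z$ for all $y\in\Lambda^*$. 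Then $K^{-1}x\in\Lambda^{**}=\Lambda$, so $x\in K\Lambda$. By Proposition~\ref{prop:pointed-category-general}, $\mathcal C(G_K,q_K)$ is therefore modular, with Hopf pairing $\Omega_K$ and twists $q_K$. In the notation of Proposition~\ref{prop:finite-RT-general}, $A_\pm(G_K,q_K)=p_\pm(K)$.

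Next comes the identification of the summand. For a coloring $a\in G_K^m$ with lifts $x_i$, Proposition~\ref{prop:finite-RT-general} gives the evaluation
\[
\prod_{i<j}\Omega_K(a_i,a_j)^{\mathcal L_{ij}}\prod_i q_K(a_i)^{\mathcal L_{ii}}
=\exp\Bigl(2\pi i\Bigl[\sum_{i<j}\mathcal L_{ij}\,x_i^\top K^{-1}x_j+\tfrac12\sum_i\mathcal L_{ii}\,x_i^\top K^{-1}x_i\Bigr]\Bigr).
\]
Using the symmetry of $\mathcal L$ and of $K^{-1}$, the bracket equals $\tfrac12\,x^\top(\mathcal L\otimes K^{-1})x$. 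By Lemma~\ref{lem:QLK-well-defined-closed-prelim}, this is $Q_{\mathcal L,K}(a)$ modulo $1$. Substituting into \eqref{eq:finite-RT-general-revised} with $|G|=|G_K|$ yields \eqref{eq:RT-closed-surgery-n}.

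The main obstacle is not the algebra but making sure the normalization is legitimate, which requires $p_\pm(K)\neq0$ so that the fractional powers make sense. I would argue this via the Gauss sum for a nondegenerate quadratic module: $|p_\pm(K)|^2=\sum_{u,v}q_K(u)\overline{q_K(v)}$. Substituting $u=v+w$ gives $\sum_w q_K(w)\sum_v\Omega_K(v,w)=|G_K|$, since the inner sum vanishes unless $w=0$ by nondegeneracy. Hence $|p_\pm(K)|=|G_K|^{1/2}\neq0$. (By Milgram's formula one in fact has $p_\pm(K)=|G_K|^{1/2}e^{\pm\pi i\sigma(K)/4}$, which will be useful later for matching with the $K$-twisted Maslov weight, but it is not needed here.)

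Finally, I would fix the branch of the half-integer powers once, as in the cyclic case of Proposition~\ref{prop:cyclic-RT-surgery}. Note that $-m\mp\sigma\equiv m+\sigma\pmod 2$, so both exponents are integers or both are half-integers, and the same convention applies to both. With the branch fixed, the Kirby-move invariance asserted in Proposition~\ref{prop:finite-RT-general} transfers verbatim.
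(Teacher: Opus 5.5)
Your proposal is correct and follows essentially the paper's route. The paper's proof also lifts a coloring to $\Lambda^*$, rewrites $\prod_{i<j}\Omega_K(a_i,a_j)^{\mathcal L_{ij}}\prod_i q_K(a_i)^{\mathcal L_{ii}}$ as $\exp\bigl(2\pi i\,Q_{\mathcal L,K}(a)\bigr)$, and substitutes this into the general pointed surgery formula of Proposition~\ref{prop:finite-RT-general}; your additional checks (nondegeneracy of $\Omega_K$, $|p_\pm(K)|=|G_K|^{1/2}\neq 0$, and a consistent branch for the half-integer powers) are all correct and simply make explicit points the paper leaves implicit.
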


\begin{proof}
Since \(\mathcal C(G_K,q_K)\) is pointed, every simple object has quantum dimension
\(1\). A coloring \(a=(a_1,\dots,a_m)\in G_K^m\) contributes a factor
\(q_K(a_i)^{\mathcal L_{ii}}\) from the framing on the component \(L_i\), and a factor
\(\Omega_K(a_i,a_j)^{\mathcal L_{ij}}\) from the linking of \(L_i\) and \(L_j\) for
\(i\neq j\). Therefore the RT evaluation of the colored surgery link is
\[
\prod_{1\le i<j\le m}\Omega_K(a_i,a_j)^{\mathcal L_{ij}}
\prod_{i=1}^m q_K(a_i)^{\mathcal L_{ii}}.
\]
If \(x_i\in\Lambda^*\) are lifts of \(a_i\), then by the definitions of \(q_K\) and
\(\Omega_K\) this product is
\[
\exp\!\Bigl(
\pi i\sum_i \mathcal L_{ii}\,x_i^\top K^{-1}x_i
+
2\pi i\sum_{i<j}\mathcal L_{ij}\,x_i^\top K^{-1}x_j
\Bigr)
=
\exp\!\bigl(2\pi i\,Q_{\mathcal L,K}(a)\bigr).
\]
Substituting this evaluation into the standard surgery formula for a pointed modular
category gives \eqref{eq:RT-closed-surgery-n}. The factors \(p_\pm(K)\) are the
one-component Gauss sums contributed by a distant \(\pm1\)-framed unknot, and hence
implement the usual Kirby normalization; compare the general RT surgery construction in
\cite[Chapter~IV]{Turaev1994}.
\end{proof}

Formula \eqref{eq:RT-closed-surgery-n} is thus the finite-quadratic-module surgery
expression that will be compared in Section \ref{section4} with the closed toral
Chern--Simons partition function. It is the precise higher-rank analogue of the cyclic
pointed RT surgery formula, with \((\mathbb Z_k,q_k)\) replaced by the discriminant
quadratic module \((G_K,q_K)\).

\subsection{Reshetikhin–Turaev boundary theory}

We now pass from the closed surgery formula to the
Reshetikhin--Turaev theory with boundary associated with the finite quadratic
module \((G_K,q_K)\) determined by the even lattice \((\Lambda,K)\).
Since \(\mathcal C(G_K,q_K)\) is a pointed modular category, Turaev's
extended RT construction applies and assigns state spaces to
extended surfaces and linear maps to extended bordisms; see
\cite[Section~IV.1--IV.3]{Turaev1994}. We record here the features of that
construction that will be used later. Recall that
\[
G_K=\Lambda^*/K\Lambda,
\qquad
q_K([x])=\exp\!\bigl(\pi i\,x^\top K^{-1}x\bigr),
\qquad
\Omega_K(a,b)=\frac{q_K(a+b)}{q_K(a)q_K(b)}.
\]
Let \(\mathcal C(G_K,q_K)\) denote the pointed modular category attached to
this finite quadratic module.

\begin{theorem}\label{thm:toral-pointed-extended-TQFT}
Let \((G_K,q_K)\) be the finite quadratic module associated with the even
integral lattice \((\Lambda,K)\). Then the pointed modular category
\(\mathcal C(G_K,q_K)\) determines, by Turaev's construction, an extended
\((2+1)\)-dimensional TQFT
\[
Z^{\mathrm{RT}}_{C(G_K,q_K)}:\Cob^{\mathrm{ext}}_{2+1}\longrightarrow \mathrm{Vect}_{\mathbb C}.
\]
For every extended surface \((\Sigma,\lambda)\), this theory assigns a
finite-dimensional vector space
\[\mathcal V^{\mathrm{RT}}_{C(G_K,q_K)}(\Sigma,\lambda):=
Z^{\mathrm{RT}}_{C(G_K,q_K)}(\Sigma,\lambda),
\]
and for every extended bordism
\[
M:(\Sigma_{\mathrm{in}},\lambda_{\mathrm{in}})
\longrightarrow
(\Sigma_{\mathrm{out}},\lambda_{\mathrm{out}})
\]
it assigns a linear map
\[
Z^{\mathrm{RT}}_{C(G_K,q_K)}(M):
\mathcal V^{\mathrm{RT}}_{C(G_K,q_K)}(\Sigma_{\mathrm{in}},\lambda_{\mathrm{in}})
\longrightarrow
\mathcal V^{\mathrm{RT}}_{C(G_K,q_K)}(\Sigma_{\mathrm{out}},\lambda_{\mathrm{out}}).
\]
Moreover, the underlying uncorrected bordism operators are computed by the
standard handlebody-pairing rule, while the extended functor
\(Z^{\mathrm{RT}}_{C(G_K,q_K)}\) incorporates the usual Maslov anomaly correction; see
\cite[Section~IV.4--IV.9]{Turaev1994}.
\end{theorem}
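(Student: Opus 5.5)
The statement is essentially an instance of Turaev's general theorem, so the plan is to verify its hypotheses for $\mathcal C(G_K,q_K)$ and then read off the structural claims from the construction.

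\textbf{Step 1: modularity of $\mathcal C(G_K,q_K)$.} By Proposition~\ref{prop:pointed-category-general}, $\mathcal C(G_K,q_K)$ is a pointed ribbon category with simple objects $R_a$, $a\in G_K$. It is modular exactly when $\Omega_K=b_{q_K}$ is nondegenerate. To check this:
\begin{itemize}[label={}]
\item first, $q_K([x])=\exp(\pi i\,x^\top K^{-1}x)$ is well defined on $G_K$; this is the case $m=1$, $\mathcal L=(1)$ of Lemma~\ref{lem:QLK-well-defined-closed-prelim}, and it uses that $K$ is even;
\item second, $\Omega_K(a,b)=\exp(2\pi i\,x^\top K^{-1}y)$;
\item third, if $\Omega_K(a,\cdot)\equiv 1$, then $x^\top K^{-1}y\in\mathbb Z$ for all $y\in\Lambda^*$, so $K^{-1}x\in\Lambda^{**}=\Lambda$. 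Hence $x\in K\Lambda$ and $a=0$.
\end{itemize}
Finiteness of $G_K$ follows from $\det K\neq 0$, with $|G_K|=|\det K|$. Semisimplicity and finiteness of the set of simple objects are immediate for the pointed category. Together these give a modular category in the sense of \cite[Chapter~II]{Turaev1994}.

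\textbf{Step 2: Turaev's construction.} Next I will apply \cite[Section~IV.1--IV.9]{Turaev1994}. This assigns to each extended surface $(\Sigma,\lambda)$ the space $\mathcal V^{\mathrm{RT}}(\Sigma,\lambda)$, built from morphism spaces $\Hom(\mathbf 1,\bigotimes_i R_{a_i}\otimes R_{a_i}^*)$ summed over colorings of a standard decomposition. In the pointed case each summand is one-dimensional. This recovers $\dim=|G_K|^g$ in genus $g$, which provides a consistency check against Theorem~\ref{thm:toral-quantization-dimension}.

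To extended bordisms the construction assigns the operators $\tau(M)$, defined by surgery presentation on a standard handlebody together with ribbon-graph evaluation in $\mathcal C(G_K,q_K)$. The closed case reduces to Proposition~\ref{prop:RT-closed-surgery-GK}. Turaev's theorem then gives functoriality under gluing, with the weights $n$ and Maslov indices absorbing the anomaly. The anomaly factor is governed by the central charge $p_+(K)/|p_+(K)|$, which Milgram's formula identifies with $e^{\pi i\sigma(K)/4}$. This is not strictly needed for the present statement, but it foreshadows the matching with \eqref{eq:weighted-toral-assignment}.

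\textbf{Step 3: the handlebody-pairing rule.} The last clause is a description of how the unweighted operator is computed. For $M$ with $\partial M=-\Sigma_{\mathrm{in}}\sqcup\Sigma_{\mathrm{out}}$, we glue standard handlebodies along parametrizations of the boundaries. This yields a closed manifold whose (raw) invariant gives the matrix coefficients in the basis indexed by colorings $G_K^g$. One then inserts the factor $\exp(\pi i c\,\mu(\cdot))$ with $\mu$ the Maslov index of the Lagrangians $\lambda_{\mathrm{in}}$, $\lambda_{\mathrm{out}}$ and those determined by $M$, as in \cite[IV.4--IV.9]{Turaev1994}.

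\textbf{Main obstacle.} The main obstacle is not conceptual. It is ensuring that Turaev's non-degeneracy requirement holds, namely $p_\pm(K)\neq0$, so that the rank $\mathcal D$ and the normalization are defined. I will handle this by showing $|p_+(K)|^2=|G_K|$: expand $p_+\overline{p_+}=\sum_{a,b}q_K(a)q_K(b)^{-1}$, substitute $a=b+c$, and use nondegeneracy of $\Omega_K$ to kill all terms with $c\neq0$.

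The remaining care point is bookkeeping of conventions: the sign of the twist and the identification of Turaev's weight with the Walker--Turaev $n$. I will fix these explicitly by matching with \cite{Galviz1} in the rank-one case.
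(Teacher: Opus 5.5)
Your proposal is correct and follows the same route as the paper: the paper simply deduces modularity of $\mathcal C(G_K,q_K)$ from Proposition~\ref{prop:pointed-category-general} and then invokes Turaev's extended construction. Your explicit checks fill in what the paper leaves implicit: well-definedness of $q_K$ from the evenness of $K$, the formula $\Omega_K(a,b)=\exp(2\pi i\,x^\top K^{-1}y)$ and its nondegeneracy via $\Lambda^{**}=\Lambda$, and the identity $|p_\pm(K)|^2=|G_K|$. Note, though, that the nonvanishing of $p_\pm(K)$ already follows from modularity in Turaev's framework, so it is not really an independent obstacle.
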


\begin{proof}
By Proposition~\ref{prop:pointed-category-general}, the category
\(\mathcal C(G_K,q_K)\) is modular. Turaev's extended
RT construction therefore applies and produces the stated
extended TQFT together with its Maslov-corrected gluing law; see
\cite[Section~IV.1--IV.3]{Turaev1994}.
\end{proof}

We begin with the torus, which will serve as the basic building block for the
general handlebody description.

\begin{prop}\label{prop:torus-state-space-toral}
Let \(\Sigma=T^2\) with Lagrangian
\[
\lambda=\langle \alpha\rangle \subset H_1(T^2;\mathbb R),
\]
where \(\alpha\) is the meridian class. Then there is a canonical
identification
\[
\mathcal V^{\mathrm{RT}}_{C(G_K,q_K)}(T^2,\lambda)\cong \mathbb C[G_K].
\]
More precisely, if \(H\) is the solid torus with
\(\ker(H_1(T^2;\mathbb R)\to H_1(H;\mathbb R))=\lambda\), then the classes of
core circles colored by \(a\in G_K\) form a basis
\[
\mathcal V^{\mathrm{RT}}_{C(G_K,q_K)}(T^2,\lambda)
=
\mathrm{Span}\{e_a\mid a\in G_K\}
\cong
\mathbb C[G_K].
\]
\end{prop}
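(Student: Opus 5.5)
To prove Proposition~\ref{prop:torus-state-space-toral}, I will follow Turaev's construction of the state space of a parametrized surface from a handlebody with ribbon graph, \cite[Section~IV.1--IV.3]{Turaev1994}, specialized to genus one.

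For a standard parametrized torus, Turaev defines
\[
\mathcal V^{\mathrm{RT}}(T^2)=\bigoplus_{i\in I}\Hom(\mathbf 1,\,V_i\otimes V_i^*),
\]
where \(I\) runs over a representative set of simple objects. By Proposition~\ref{prop:pointed-category-general} the simples of \(\mathcal C(G_K,q_K)\) are the \(R_a\), \(a\in G_K\), with \(R_a^*\simeq R_{-a}\). Hence \(R_a\otimes R_a^*\simeq R_0=\mathbf 1\), and each summand \(\Hom(\mathbf 1,R_a\otimes R_{-a})\) is one-dimensional, spanned by the coevaluation. This already gives \(\dim\mathcal V^{\mathrm{RT}}(T^2,\lambda)=|G_K|\), and the direct sum is indexed by \(G_K\).

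Next I identify these basis vectors geometrically. The solid torus \(H\) bounds \(T^2\) with \(\ker(H_1(T^2;\mathbb R)\to H_1(H;\mathbb R))=\langle\alpha\rangle=\lambda\), so the meridian is exactly the class killed in \(H\). Take the core circle of \(H\) colored by \(R_a\), with its framing induced from the standard parametrization. Turaev's map \(\Hom(\mathbf 1,R_a\otimes R_a^*)\to\mathcal V(\partial H)\) sends the coevaluation to the vector \(Z^{\mathrm{RT}}(H,\text{core}_a)\); call it \(e_a\).

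The remaining point is that the \(e_a\) form a basis and that the identification is canonical. Linear independence follows from the nondegenerate pairing between \(\mathcal V(T^2)\) and \(\mathcal V(-T^2)\). Gluing two solid tori along the identity of the boundary gives \(S^2\times S^1\) containing a link \(\text{core}_a\sqcup\overline{\text{core}_b}\). Its invariant is \(\dim\Hom(\mathbf 1,R_a\otimes R_b^*)=\delta_{ab}\), up to the overall normalization \(|G_K|^{\pm1/2}\) and quantum dimensions, which are all \(1\) since the category is pointed. The pairing matrix is therefore a nonzero multiple of the identity, so \(\{e_a\}\) is a basis, and \(e_a\mapsto a\) gives \(\mathcal V\cong\mathbb C[G_K]\).

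Canonicity is the main obstacle. The identification depends only on the Lagrangian \(\lambda\) and the solid torus it determines, but it must not depend on the parametrization beyond that. Two parametrizations inducing the same \(\lambda\) differ by a power of the Dehn twist along \(\alpha\), which extends over \(H\). Such a twist changes the framing of the core by an integer \(t\), acting by \(\theta_a^t=q_K(a)^t\) on \(e_a\). Turaev's Maslov-corrected extended structure absorbs these phases, since no Maslov index change occurs when \(\lambda\) is fixed. So I would argue that the basis is canonical up to these diagonal twist phases, and canonical on the nose once the framing of the core is fixed by the extended structure. Verifying that no residual phase survives is where I expect the difficulty to lie.
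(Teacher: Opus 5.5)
Your treatment of the substantive claim is correct, but it follows a different route from the paper. The paper works in the skein picture. It takes $\mathcal V^{\mathrm{RT}}_{C(G_K,q_K)}(T^2,\lambda)$ to be generated by colored ribbon graphs in $H$, uses pointedness to collapse every graph by isotopy and fusion to a single core colored by some $a\in G_K$, and then simply asserts that there are no further linear relations.

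You start instead from Turaev's algebraic model $\bigoplus_{i}\Hom(\mathbf 1,V_i\otimes V_i^*)$, which gives $\dim=|G_K|$ at once, and you identify its summands with the core-colored solid tori. You then prove independence through the double $H\cup_{\mathrm{id}}(-H)=S^2\times S^1$, where the two parallel fibres give $\dim\Hom(\mathbf 1,R_a\otimes R_b^*)=\delta_{ab}$. This holds exactly, not just up to normalization, in Turaev's normalization $\tau(S^1\times\Sigma)=\dim\mathcal V(\Sigma)$.

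Your route therefore actually proves the independence that the paper only asserts. The paper's route, in turn, explains why arbitrary colored graphs in $H$ reduce to one core, which you never need. Note that your pairing is the one between $\mathcal V(T^2,\lambda)$ and $\mathcal V(-T^2,\lambda)$, and it is a permutation matrix. It is not the Hopf-link matrix $\Omega_K$ of Proposition~\ref{prop:torus-pairing-toral}, which comes from gluing by the meridian--longitude swap. Both are nondegenerate, so either gives independence.

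Your final paragraph, however, needs correcting: the residual phase does survive, and no argument will remove it. The Dehn twist $T_\alpha$ preserves $\lambda$ and extends over $H$. Its weight-zero mapping cylinder is therefore an endomorphism of $(T^2,\lambda)$ with vanishing Maslov term, and it acts by $e_a\mapsto q_K(a)^{\pm1}e_a$. This is the modular $T$-matrix, a genuine linear action rather than a projective anomaly. So your reasoning runs backwards: precisely because the Maslov index does not change, there is nothing to absorb these phases. They are nontrivial whenever $G_K\neq 0$, since $q_K\equiv 1$ would force $\Omega_K\equiv 1$.

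The stabilizer of $\lambda$ is also larger than $\langle T_\alpha\rangle$. The elliptic involution preserves $\lambda$, extends over $H$ by reversing the core, and sends $e_a\mapsto e_{-a}$.

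Since the extended structure on $T^2$ is only the Lagrangian, it cannot fix the framing of the core. What is canonical, once $\alpha$ is oriented, is the splitting into the lines $\mathbb C e_a$. The generators $e_a$, and hence the identification with $\mathbb C[G_K]$, depend on a choice of framing of the core, equivalently a longitude. That choice is exactly the ``standard handlebody presentation adapted to $\lambda$'' fixed in Theorem~\ref{thm:toral-state-space-identification}; the paper's proof does not discuss this point. You should state canonicity relative to that choice rather than try to prove it outright.
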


\begin{proof}
This is the standard pointed-handlebody description of the torus state space. Let \(H\) be the solid torus whose
meridian bounds a disk, so that \(\partial H=T^2\) and the induced Lagrangian
is \(\lambda\). By Turaev's skein-theoretic construction
\cite[Section~IV.1--IV.2]{Turaev1994}, the state space
\(\mathcal V^{\mathrm{RT}}_{C(G_K,q_K)}(T^2,\lambda)\) is generated by
\(\mathcal C(G_K,q_K)\)-colored ribbon graphs in \(H\), modulo the usual local
relations.

Since \(\mathcal C(G_K,q_K)\) is pointed, every simple object is invertible and
every morphism space between simple tensor products is one-dimensional.
Therefore any colored ribbon graph in \(H\) is reduced, by isotopy and fusion,
to a disjoint union of parallel copies of the core circle. If two parallel core
circles are colored by \(a,b\in G_K\), the fusion rules identify them with a
single core circle colored by \(a+b\). It follows that every state is
represented by a single core colored by some \(a\in G_K\), and there are no
further linear relations among these generators. Hence
\[
\mathcal V^{\mathrm{RT}}_{C(G_K,q_K)}(T^2,\lambda)
=
\mathrm{Span}\{e_a\mid a\in G_K\}
\cong
\mathbb C[G_K].
\qedhere
\]
\end{proof}

The dual pairing on the torus state space is determined by the Hopf--link
evaluation.

\begin{prop}\label{prop:torus-pairing-toral}
Let \(\{e_a\}_{a\in G_K}\) be the basis of
\(\mathcal V^{\mathrm{RT}}_{C(G_K,q_K)}(T^2,\lambda)\) from
Proposition~\ref{prop:torus-state-space-toral}, and let
\(\{e_a^\vee\}_{a\in G_K}\subset \mathcal V^{\mathrm{RT}}_{C(G_K,q_K)}(T^2,\lambda)^*\) denote the
covectors represented by the oppositely oriented solid torus with core colored
by \(a\). Then
\[
\langle e_a^\vee,e_b\rangle
=
Z^{\mathrm{RT}}_{C(G_K,q_K)}(S^3,\text{Hopf link colored by }a,b)
=
S^{\mathrm{Hopf}}_{a,b}
=
\Omega_K(a,b).
\]
Equivalently, if \(\{\varepsilon_b\}_{b\in G_K}\) is the algebraic dual basis,
then
\[
e_a^\vee=\sum_{b\in G_K}\Omega_K(a,b)\,\varepsilon_b.
\]
In particular, the normalized modular \(S\)-operator on the torus state space is
\[
S(e_a)=|G_K|^{-1/2}\sum_{b\in G_K}\Omega_K(a,b)\,e_b,
\]
up to the conventional sign choice.
\end{prop}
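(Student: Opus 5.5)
The pairing should be computed by gluing. The covector \(e_a^\vee\) is the solid torus \(-H\) with core colored by \(a\), and the vector \(e_b\) is \(H\) with core colored by \(b\). Evaluating one on the other therefore means gluing \(-H\) and \(H\) along \(T^2\) and applying \(Z^{\mathrm{RT}}_{C(G_K,q_K)}\) to the resulting closed manifold with its embedded colored ribbon link.

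The first step is to identify this glued manifold. The gluing map is the standard identification that exchanges meridian and longitude, which is the one used in Turaev's definition of the pairing on handlebody state spaces in \cite[Section~IV.1--IV.2]{Turaev1994}. With it, \(H\cup_{T^2}(-H)\cong S^3\). The two core circles become the two components of a Hopf link, with zero framings and linking number \(1\). By the definition of the RT functor on closed manifolds containing colored links, the pairing equals the RT invariant of \((S^3,\text{Hopf link colored by }a,b)\).

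We must also check that no Maslov correction enters. Here both Lagrangians are \(\lambda\), and the gluing is the one used to define the dual pairing, so the extended weight is the canonical one for \(S^3\). The normalization is chosen so that \(Z^{\mathrm{RT}}(S^3,\emptyset)\) reproduces the standard evaluation, and the colored-link invariant in \(S^3\) is then the Reshetikhin--Turaev link evaluation \(\langle L\rangle\).

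Next, evaluate the colored Hopf link with the pointed ribbon rules already used in the proof of Proposition~\ref{prop:RT-closed-surgery-GK}. The linking matrix is \(\begin{pmatrix}0&1\\1&0\end{pmatrix}\). There are no twist factors, since the framings are \(0\), and there is one linking factor \(\Omega_K(a,b)\). This gives \(S^{\mathrm{Hopf}}_{a,b}=\Omega_K(a,b)\), which matches \eqref{eq:hopf-matrix-general} with \(b_q=\Omega_K\). Expanding \(e_a^\vee\) in the algebraic dual basis then follows by linearity: \(e_a^\vee(e_b)=\Omega_K(a,b)\) gives the stated formula. The bicharacter \(\Omega_K\) is nondegenerate because \(K\) is nondegenerate, so the \(e_a^\vee\) form a basis and the pairing is perfect.

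For the \(S\)-operator, \(S\) is the mapping class \((\alpha,\beta)\mapsto(\beta,-\alpha)\) acting on the torus state space. Its matrix in the core basis is the normalized Hopf matrix \(\mathcal D^{-1}S^{\mathrm{Hopf}}\), with \(\mathcal D^2=\sum_a \dim(a)^2=|G_K|\), because all quantum dimensions are \(1\). The sign or conjugation ambiguity comes from the orientation convention, either \(S\) versus \(S^{-1}\) or \(\Omega_K\) versus \(\overline{\Omega_K}\) through \(R_x^*\simeq R_{-x}\). This gives the final formula, matching Remark~\ref{rem:fourier-S}.

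The main obstacle is the bookkeeping of orientations and framings in the gluing: confirming that \(-H\cup H\) carries the cores to a Hopf link with linking number \(+1\) rather than \(-1\), and with zero framing. A sign error there would replace \(\Omega_K(a,b)\) by \(\Omega_K(a,-b)\). I would resolve this by fixing the orientation conventions exactly as in Turaev's treatment of the genus-one pairing, and checking against the cyclic case from \cite{Galviz1}.
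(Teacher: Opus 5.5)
Your route is the paper's route. You glue by the meridian--longitude exchange, recognize $S^3$ with the $a,b$-colored Hopf link, evaluate it by the pointed ribbon rules, and then read off the dual-basis expansion and the Fourier form of $S$.

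\textbf{Normalization.} The step that fails is your claim that, in this theory, a colored link in $S^3$ evaluates to the bare ribbon evaluation $\langle L\rangle$. Proposition~\ref{prop:RT-closed-surgery-GK} with the empty link ($m=0$) gives $Z^{\mathrm{RT}}_{\mathcal C(G_K,q_K)}(S^3)=|G_K|^{-1/2}$. This is not a free choice: the same formula gives $Z(S^1\times S^2)=|G_K|^{-1/2}p_+(K)^{-1/2}p_-(K)^{-1/2}|G_K|=1=\dim\mathcal V(S^2)$, as any TQFT must.

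So your gluing computation actually yields $\langle e_a^\vee,e_b\rangle=|G_K|^{-1/2}\,\Omega_K(a,b)$, up to the linking-sign question you leave open. Your own last paragraph shows the inconsistency. The matrix entry of the $S$-operator is the value of the very same closed manifold, $S^3$ with the $a,b$-colored Hopf link, and there you correctly assign it $\mathcal D^{-1}S^{\mathrm{Hopf}}_{a,b}$.

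To reach the displayed value $\Omega_K(a,b)$ you must do one of two things. Either rescale the covectors $e_a^\vee$ by $|G_K|^{1/2}$, or read the middle term as the link evaluation $\langle\mathrm{Hopf}_{a,b}\rangle$ rather than as the TQFT value. This has to be imposed as a convention; it cannot be deduced from $Z(S^3,\varnothing)$. The paper's proof passes over the same factor without comment.

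\textbf{Maslov paragraph.} This paragraph also conflicts with the gluing you chose. After the exchange, the second solid torus kills $\beta$, so it induces $\langle\beta\rangle$, not $\lambda$. If both pieces really induced $\lambda$, the gluing would be the identity, which is what the cylinder (the duality pairing of the TQFT) uses. The closure would then be $H\cup_{\mathrm{id}}(-H)=S^1\times S^2$ with two parallel cores.

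In surgery terms, that closure is a $0$-framed unknot with two meridians colored $a$ and $b$. The pointed surgery rules give $|G_K|^{-1}\sum_{u\in G_K}\Omega_K(u,a\pm b)=\delta_{a,\mp b}$, a permutation matrix rather than $\Omega_K$.

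So the proposition holds only with $e_a^\vee$ \emph{defined} as the exchange-glued solid torus. In effect, $e_a^\vee$ is a cylinder-dual covector precomposed with the $S$-operator. You should state this as the definition, rather than attribute it to the canonical duality pairing or to Turaev's handlebody pairing.

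No Maslov correction enters, but the right reason is different from yours. Two of the three Wall Lagrangians coincide ($\lambda$ from $H$ and the surface's own $\lambda$), so the Maslov index vanishes.
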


\begin{proof}
By Turaev's construction of the pairing on the torus state space
\cite[Section~IV.1--IV.2]{Turaev1994}, one glues two solid tori along their
boundary by the diffeomorphism exchanging meridian and longitude. The resulting
closed $3$--manifold is \(S^3\). If the core circles are colored by \(a\) and
\(b\), they form a Hopf link in \(S^3\). Hence
\[
\langle e_a^\vee,e_b\rangle
=
Z^{\mathrm{RT}}_{C(G_K,q_K)}(S^3,\text{Hopf link colored by }a,b).
\]
In the pointed category \(\mathcal C(G_K,q_K)\), the Hopf--link matrix is the
bicharacter \(\Omega_K\), so
\[
\langle e_a^\vee,e_b\rangle=\Omega_K(a,b).
\]
The formula for \(e_a^\vee\) in the algebraic dual basis follows immediately,
and the displayed expression for the normalized modular \(S\)-operator is the
corresponding finite Fourier transform.
\end{proof}

For the comparison with toral Chern--Simons theory, one needs not only the torus
case but the general genus-\(g\) handlebody description.

\begin{prop}\label{prop:surface-basis-toral}
Let \((\Sigma,\lambda)\) be a connected extended surface of genus \(g\). Choose a
handlebody \(H\) with \(\partial H=\Sigma\) such that
\[
\ker(H_1(\Sigma;\mathbb R)\to H_1(H;\mathbb R))=\lambda.
\]
Then \(\mathcal V^{\mathrm{RT}}_{C(G_K,q_K)}(\Sigma,\lambda)\) admits a preferred basis indexed by
\(G_K^{\,g}\). More precisely, after choosing a standard system of \(g\) core
circles in \(H\), the vectors
\[
e_{\mathbf a},\qquad \mathbf a=(a_1,\dots,a_g)\in G_K^{\,g},
\]
obtained by coloring the \(i\)-th core by \(a_i\), form a basis. In particular,
\[
\dim \mathcal V^{\mathrm{RT}}_{C(G_K,q_K)}(\Sigma,\lambda)=|G_K|^{\,g}.
\]
\end{prop}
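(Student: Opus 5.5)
The plan is to reduce to the torus case of Proposition~\ref{prop:torus-state-space-toral}, using the fact that in a pointed modular category the skein module of a handlebody is as simple as possible. First, fix the handlebody \(H\) with \(\partial H=\Sigma\) realizing \(\lambda\), and a standard system of core circles \(c_1,\dots,c_g\) together with meridian disks \(D_1,\dots,D_g\) dual to them. By Turaev's construction \cite[Section~IV.1--IV.2]{Turaev1994}, \(\mathcal V^{\mathrm{RT}}_{C(G_K,q_K)}(\Sigma,\lambda)\) is spanned by \(\mathcal C(G_K,q_K)\)-colored ribbon graphs in \(H\) modulo local relations. Equivalently, it is spanned by colorings of a fixed trivalent spine of \(H\) by simple objects with admissible vertex morphisms.

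\textbf{Spanning.} I will argue as in the torus case. All simple objects \(R_a\) are invertible and \(R_a\otimes R_b\simeq R_{a+b}\), so every morphism space \(\Hom(R_a\otimes R_b,R_c)\) is one-dimensional when \(c=a+b\) and zero otherwise. Hence any colored ribbon graph in \(H\) can be isotoped and fused into a linear combination of colored parallel cores. Fusing parallel copies of the same core shows that each core carries a single color \(a_i\in G_K\). Thus the vectors \(e_{\mathbf a}\), \(\mathbf a\in G_K^g\), span.

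\textbf{Independence.} This is the main obstacle, because "no further relations" is only asserted rather than proven in the torus case. I will prove it with a pairing argument. Let \(H'\) be the complementary handlebody, glued so that \(H\cup_\Sigma H'=S^3\) (standard Heegaard splitting). Color the cores of \(H'\) by \(\mathbf b\in G_K^g\). Each core of \(H\) links exactly one core of \(H'\) in a Hopf link, and distinct pairs are split. By multiplicativity of the RT evaluation over split links together with Proposition~\ref{prop:torus-pairing-toral}, the pairing is
\[
\langle e^\vee_{\mathbf b},e_{\mathbf a}\rangle=\prod_{i=1}^g\Omega_K(b_i,a_i),
\]
up to a nonzero normalization from \(Z(S^3)\). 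This matrix is the \(g\)-fold tensor power of the Hopf matrix \(\Omega_K\). The Hopf matrix is invertible since \(\Omega_K\) is a nondegenerate bicharacter: by character orthogonality, \(\Omega_K^2\) is \(|G_K|\) times the permutation matrix \(a\mapsto -a\). Hence the \(e_{\mathbf a}\) are linearly independent, because the pairing is well defined on the state space.

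Alternatively, one could cross-check the dimension with the Verlinde formula \(\dim=\sum_a (\dim R_a)^{2-2g}\mathcal D^{2g-2}=|G_K|\cdot|G_K|^{g-1}\). Here \(\mathcal D^2=|G_K|\) and all quantum dimensions equal \(1\), giving \(|G_K|^g\). This confirms that the spanning set is a basis.
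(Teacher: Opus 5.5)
Your proof is correct. On spanning it matches the paper: both appeal to Turaev's handlebody model and use pointed fusion to collapse colored ribbon graphs in $H$ onto the $g$ cores. Both do this equally sketchily; neither says explicitly that crossings are removed at the cost of scalar double braidings.

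The genuine difference is linear independence. The paper only asserts it ``for the same reason as in the torus case: there are no nontrivial relations beyond the pointed fusion rules,'' which restates the claim rather than proving it. You instead realize $H$ as one side of the standard genus-$g$ Heegaard splitting of $S^3$ and pair against the complementary handlebody $H'$ with colored cores. The closure is a split union of $g$ Hopf links. So, up to a color-independent nonzero constant, the Gram matrix is the $g$-fold tensor power of $\bigl(\Omega_K(b,a)\bigr)_{a,b}$. That matrix is invertible because $\Omega_K^2$ is $|G_K|$ times the permutation matrix of $a\mapsto -a$.

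This route buys three things:
\begin{enumerate}
\item It is an actual proof of independence.
\item It shows that independence is exactly where nondegeneracy of $\Omega_K$ (modularity) enters.
\item It shows that the handlebody covectors $e^\vee_{\mathbf b}$ span the dual space. The matrix-coefficient arguments of Section~\ref{section4} tacitly need this, since by Proposition~\ref{prop:torus-pairing-toral} these covectors are not literally a dual basis.
\end{enumerate}

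Two small points to tidy up. First, $H'$ induces the complementary Lagrangian, so the gluing also contributes a color-independent Maslov factor (a power of the anomaly constant $\kappa(K)$), in addition to the normalization of $Z(S^3)$. Second, orientation or framing conventions may replace $\Omega_K(b_i,a_i)$ by $\Omega_K(b_i,-a_i)$ or insert diagonal twist factors. None of these affects invertibility.

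Your Verlinde count $\mathcal D^{2g-2}\sum_{a\in G_K}1=|G_K|^{g}$ is an equally valid alternative finish: $|G_K|^g$ spanning vectors in a space of that dimension form a basis.
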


\begin{proof}
This is the higher-genus pointed-handlebody analogue of Proposition~\ref{prop:torus-state-space-toral}. By Turaev's handlebody model \cite[Section~IV.1--IV.2]{Turaev1994}, the space \(\mathcal V^{\mathrm{RT}}_{C(G_K,q_K)}(\Sigma,\lambda)\) is generated by colored ribbon graphs in \(H\). Since the category is pointed, fusion reduces every such graph to a standard graph supported on the \(g\) handlebody cores. The color on the \(i\)-th core is an element \(a_i\in G_K\), and the resulting vectors \(e_{\mathbf a}\) are linearly independent for the same reason as in the torus case: there are no nontrivial relations beyond the pointed fusion rules. Therefore the vectors indexed by \(G_K^{\,g}\) form a basis.
\end{proof}

We next compare the RT and toral state spaces in genus $g$, obtaining the canonical boundary identification used later in Section \ref{section4}.

\begin{theorem}\label{thm:toral-state-space-identification}
Let \((\Sigma,\lambda)\) be a connected extended surface of genus \(g\), and let $L_\lambda\subset H^1(\Sigma;\mathbb R)$ be the rational Lagrangian corresponding to \(\lambda\subset H_1(\Sigma;\mathbb R)\)
under the boundary Lagrangian correspondence of Lemma \ref{lem:lambdaX-LX}.
Then the Reshetikhin--Turaev state space $\mathcal V^{\mathrm{RT}}_{\mathcal C(G_K,q_K)}(\Sigma,\lambda)$
and the toral Chern--Simons state space $\mathcal H_{\mathbb T,K}(\Sigma,L_\lambda)$ have the same dimension and admit preferred bases indexed by \(G_K^{\,g}\). In particular,
after fixing the standard handlebody presentation adapted to \(\lambda\), one obtains a
canonical vector-space isomorphism
\begin{equation}\label{eq:PhiSigmaK}
\Phi_{\Sigma,K}:
\mathcal V^{\mathrm{RT}}_{\mathcal C(G_K,q_K)}(\Sigma,\lambda)
\xrightarrow{\ \cong\ }
\mathcal H_{\mathbb T,K}(\Sigma,L_\lambda),
\end{equation}
sending the preferred RT handlebody basis to the preferred Bohr--Sommerfeld basis.
Moreover,
\[
\dim \mathcal V^{\mathrm{RT}}_{\mathcal C(G_K,q_K)}(\Sigma,\lambda)
=
\dim \mathcal H_{\mathbb T,K}(\Sigma,L_\lambda)
=
|G_K|^{\,g}
=
|\det K|^{\,g}.
\]
\end{theorem}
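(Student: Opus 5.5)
The argument combines the two dimension counts already available and then fixes the isomorphism by matching indexing sets. On the RT side, Proposition~\ref{prop:surface-basis-toral} gives a basis $\{e_{\mathbf a}\}_{\mathbf a\in G_K^{\,g}}$ of $\mathcal V^{\mathrm{RT}}_{\mathcal C(G_K,q_K)}(\Sigma,\lambda)$. This basis comes from coloring the $g$ cores of a handlebody $H$ with $\ker(H_1(\Sigma;\mathbb R)\to H_1(H;\mathbb R))=\lambda$. On the Chern--Simons side, Lemma~\ref{lem:lambdaX-LX} applied to $X=H$ shows that the Lagrangian $L_\lambda$ corresponding to $\lambda$ is exactly $L_H^{\mathbb R}$, and that $L_\lambda$ is rational. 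Proposition~\ref{prop:BS-leaves} and Theorem~\ref{thm:toral-quantization-dimension} then give Bohr--Sommerfeld leaves parametrized by $G_K^{\,g}$, each contributing a one-dimensional summand to \eqref{eq:toral-hilbert-space}. Hence $\dim\mathcal H_{\mathbb T,K}(\Sigma,L_\lambda)=|G_K|^g=|\det K|^g$, and the dimension equality follows at once. The identity $|G_K|=|\det K|$ is the standard index formula $[\Lambda^*:K\Lambda]=|\det K|$ for a nondegenerate integral form.

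To make the identification preferred rather than abstract, I will make the BS parametrization explicit using the same handlebody data. Choose a symplectic basis $\alpha_1,\dots,\alpha_g,\beta_1,\dots,\beta_g$ of $H_1(\Sigma;\mathbb Z)$ with the $\alpha_i$ spanning $\lambda$ (the meridians bounding disks in $H$) and the $\beta_i$ the longitudes parallel to the cores. A leaf of $\mathcal P_{L_\lambda}$ is determined by the holonomies along the directions transverse to $L_\lambda\otimes\mathfrak t$. Using $\omega_{\Sigma,K}$, the Bohr--Sommerfeld condition forces the $K$-dual of these holonomies to lie in $\Lambda^*$, and the result is well defined modulo $K\Lambda$. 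This yields a labelling $\ell_{\mathbf a}$ with $\mathbf a=(a_1,\dots,a_g)\in(\Lambda^*/K\Lambda)^g$, in which the $i$-th entry is attached to the $i$-th handle, matching the coloring of the $i$-th core. In each line $\Gamma_{\mathrm{flat}}(\ell_{\mathbf a};\mathcal L_{\Sigma,K}\otimes|\det\mathcal P^*_{L_\lambda}|^{1/2})$ I choose a unit vector $s_{\mathbf a}$. The choice is normalized by the trivialization of $\mathcal L_{\Sigma,K}$ over the leaf through the origin, transported by the translation action that sends leaf $0$ to leaf $\mathbf a$, together with the translation-invariant half-density. I then define $\Phi_{\Sigma,K}(e_{\mathbf a})=s_{\mathbf a}$ and extend linearly. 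This is an isomorphism because it sends a basis bijectively to a basis.

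The main obstacle is the word \emph{canonical}. There are two difficulties. First, the phase of each $s_{\mathbf a}$ must be fixed without arbitrary choices. Second, the labelling $\mathbf a$ must not depend on the auxiliary symplectic basis adapted to $H$. For the labelling, a change of basis preserving $\lambda$ acts on both sides by the same $GL_g(\mathbb Z)$-type action on $G_K^g$: on cores by handle slides, and on BS leaves by the induced action on transverse holonomies. I will check this on generators. For the phases, I will first try to fix $s_{\mathbf a}$ as the leafwise boundary vector $\sigma_{H,p}\otimes\mu_{H,p}$ of the handlebody with a colored core inserted. If that is not available at this stage, I will fall back to the translation-invariant normalization above and state the canonicity as ``after fixing the handlebody presentation adapted to $\lambda$'', which is exactly how the theorem is phrased. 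Either way, the remaining phase ambiguity can only be a character of $G_K^g$, and it will be absorbed in Section~\ref{section4}, where the bordism vectors are compared.
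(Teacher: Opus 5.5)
Your core argument is the paper's proof: you take the Reshetikhin--Turaev basis $\{e_{\mathbf a}\}_{\mathbf a\in G_K^{\,g}}$ from Proposition~\ref{prop:surface-basis-toral} and the Bohr--Sommerfeld count from Proposition~\ref{prop:BS-leaves} and Theorem~\ref{thm:toral-quantization-dimension}, then define $\Phi_{\Sigma,K}$ by sending $e_{\mathbf a}$ to the Bohr--Sommerfeld vector with the same label. Your explicit meridian-holonomy labelling and translation-normalized phases go beyond the paper, which takes the preferred Bohr--Sommerfeld basis as given and defers all compatibility with bordisms to Section~\ref{section4}; so your unproved claim that the residual phase ambiguity is a character of $G_K^{\,g}$ that gets absorbed there is not needed for this statement.
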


\begin{proof}
On the toral Chern--Simons side, Section~\ref{sec:toral-CS} identifies the boundary phase space with the
symplectic torus
\[
\mathcal M_\Sigma(\mathbb T)
=
H^1(\Sigma;\mathfrak t)\big/H^1(\Sigma;\Lambda),
\]
equipped with the real polarization determined by \(L_\lambda\). By the geometric
quantization results recalled there, the Bohr--Sommerfeld leaves form a torsor for
\(G_K^{\,g}\), and each Bohr--Sommerfeld leaf contributes a one-dimensional summand to
the quantization. Hence
\[
\dim \mathcal H_{\mathbb T,K}(\Sigma,L_\lambda)=|G_K|^{\,g};
\]
see Proposition~\ref{prop:BS-leaves} and Theorem~\ref{thm:toral-quantization-dimension}. On the Reshetikhin--Turaev side, let \(H\) be a handlebody with
\(\partial H=\Sigma\) and
\[
\ker\!\bigl(H_1(\Sigma;\mathbb R)\to H_1(H;\mathbb R)\bigr)=\lambda.
\]
By Proposition~\ref{prop:surface-basis-toral}, the state space $\mathcal V^{\mathrm{RT}}_{\mathcal C(G_K,q_K)}(\Sigma,\lambda)$ admits a preferred basis $\{e_{\mathbf a}\}_{\mathbf a\in G_K^{\,g}}$ obtained by coloring the \(g\) standard handlebody cores by labels
\(\mathbf a=(a_1,\dots,a_g)\in G_K^{\,g}\). In particular,
\[
\dim \mathcal V^{\mathrm{RT}}_{\mathcal C(G_K,q_K)}(\Sigma,\lambda)=|G_K|^{\,g}.
\]

Thus both state spaces have the same dimension and are equipped with preferred bases
indexed by the same finite set \(G_K^{\,g}\). Sending the RT basis vector
\(e_{\mathbf a}\) to the toral Bohr--Sommerfeld basis vector indexed by the same
\(\mathbf a\in G_K^{\,g}\) defines the isomorphism \eqref{eq:PhiSigmaK}. The compatibility
of these identifications with bordism operators is proved later in Section~\ref{section4}.
\end{proof}

We now record the standard closure rule for matrix coefficients of bordism operators.

\begin{prop}\label{prop:bordism-matrix-coefficients-toral}
Let $M:(\Sigma_{\mathrm{in}},\lambda_{\mathrm{in}})
\to
(\Sigma_{\mathrm{out}},\lambda_{\mathrm{out}})$
be an extended bordism, and choose the preferred handlebody bases
\[
\{e_{\mathbf a}^{\mathrm{in}}\}_{\mathbf a\in G_K^{g_{\mathrm{in}}}}
\subset
\mathcal V^{\mathrm{RT}}_{C(G_K,q_K)}(\Sigma_{\mathrm{in}},\lambda_{\mathrm{in}}),
\qquad
\{(e_{\mathbf b}^{\mathrm{out}})^\vee\}_{\mathbf b\in G_K^{g_{\mathrm{out}}}}
\subset
\mathcal V^{\mathrm{RT}}_{C(G_K,q_K)}(\Sigma_{\mathrm{out}},\lambda_{\mathrm{out}})^*.
\]
Let \(M_{\mathbf a,\mathbf b}\) be the closed extended $3$--manifold obtained by
gluing to \(M\) the incoming handlebody colored by \(\mathbf a\) and the
outgoing handlebody colored by \(\mathbf b\). Then
\begin{equation}\label{eq:matrix-coefficients-by-closure}
\big\langle (e_{\mathbf b}^{\mathrm{out}})^\vee,
Z^{\mathrm{RT}}_{C(G_K,q_K)}(M)\,e_{\mathbf a}^{\mathrm{in}}\big\rangle
=
Z^{\mathrm{RT}}_{C(G_K,q_K)}(M_{\mathbf a,\mathbf b}).
\end{equation}
\end{prop}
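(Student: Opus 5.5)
The identity \eqref{eq:matrix-coefficients-by-closure} is a consequence of the gluing axiom of the extended TQFT of Theorem~\ref{thm:toral-pointed-extended-TQFT}, combined with the handlebody description of the preferred bases in Proposition~\ref{prop:surface-basis-toral} and of the dual covectors as in Proposition~\ref{prop:torus-pairing-toral}. The plan is to write both sides as values of the functor on glued bordisms. We then check that the three pieces compose to the closed manifold $M_{\mathbf a,\mathbf b}$ with consistent Lagrangian data.

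\textbf{Step 1 (identify the basis vectors as bordism values).} Let $H_{\mathrm{in}}$ be the handlebody with $\partial H_{\mathrm{in}}=\Sigma_{\mathrm{in}}$ and kernel $\lambda_{\mathrm{in}}$, with its $g_{\mathrm{in}}$ cores coloured by $\mathbf a$. By construction, $e^{\mathrm{in}}_{\mathbf a}=Z^{\mathrm{RT}}_{C(G_K,q_K)}(H_{\mathrm{in}},\mathbf a)$, viewed as a map $\mathbb C\to\mathcal V^{\mathrm{RT}}(\Sigma_{\mathrm{in}},\lambda_{\mathrm{in}})$. Similarly, take the oppositely oriented handlebody $-H_{\mathrm{out}}$ with cores coloured by $\mathbf b$, regarded as a bordism $(\Sigma_{\mathrm{out}},\lambda_{\mathrm{out}})\to\varnothing$. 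Its value is the covector $(e^{\mathrm{out}}_{\mathbf b})^\vee$, exactly as in the torus case treated in Proposition~\ref{prop:torus-pairing-toral}.

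\textbf{Step 2 (compose).} Functoriality gives
\[
\big\langle (e_{\mathbf b}^{\mathrm{out}})^\vee, Z(M)\,e_{\mathbf a}^{\mathrm{in}}\big\rangle
=Z\big((-H_{\mathrm{out}},\mathbf b)\circ M\circ (H_{\mathrm{in}},\mathbf a)\big),
\]
and the underlying closed manifold of this composite is $M_{\mathbf a,\mathbf b}$ by definition. In Turaev's formalism, composition of extended bordisms carries a Maslov correction, which is a power of $p_+(K)/p_-(K)$ determined by the Maslov index of the three Lagrangians at each gluing surface. The closed extended manifold $M_{\mathbf a,\mathbf b}$ is meant to carry the weight $n$ determined by Walker--Turaev additivity: the weights of the pieces (zero on the handlebodies, the given weight on $M$) plus the Maslov indices $\mu(\lambda_{H_{\mathrm{in}}},\lambda_{\mathrm{in}},\lambda_M)$ and the corresponding index at $\Sigma_{\mathrm{out}}$. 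With that convention, the gluing formula holds without any extra scalar, which yields \eqref{eq:matrix-coefficients-by-closure}.

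\textbf{Main obstacle.} The delicate point is exactly this bookkeeping. The handlebodies are chosen so that their Lagrangians coincide with $\lambda_{\mathrm{in}}$ and $\lambda_{\mathrm{out}}$. Two of the three Lagrangians in each Maslov triple therefore agree, so the index vanishes and the correction should be trivial. To settle this, I would first verify the vanishing $\mu(\lambda,\lambda,\lambda')=0$ from the Maslov--Kashiwara index properties. I would then confirm that the framing and colouring conventions for $-H_{\mathrm{out}}$ reproduce the covector $(e^{\mathrm{out}}_{\mathbf b})^\vee$ defined in the statement. These conventions include the orientation reversal and the fact that the cores are coloured by $\mathbf b$ rather than $-\mathbf b$. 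Once this is settled, the statement reduces to the gluing axiom of \cite[Section~IV.4--IV.9]{Turaev1994}.
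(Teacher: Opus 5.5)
Your proposal is correct and follows essentially the same route as the paper: represent $e^{\mathrm{in}}_{\mathbf a}$ by the coloured handlebody $H_{\mathrm{in}}$ and $(e^{\mathrm{out}}_{\mathbf b})^\vee$ by the oppositely oriented coloured handlebody $-H_{\mathrm{out}}$, glue both to $M$ to obtain $M_{\mathbf a,\mathbf b}$, and apply the gluing/pairing rule of Turaev's extended theory. The explicit weight bookkeeping you add is consistent with the paper's argument but is only implicit there: you define the weight of $M_{\mathbf a,\mathbf b}$ by Walker--Turaev additivity, and you note that each Maslov triple contains a repeated Lagrangian, so the indices vanish.
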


\begin{proof}
Equation \eqref{eq:matrix-coefficients-by-closure} is the standard pairing rule
in Turaev's extended RT formalism; see \cite[Section~IV.1--IV.3]{Turaev1994}. By definition, the covector
\((e_{\mathbf b}^{\mathrm{out}})^\vee\) is represented by the oppositely
oriented outgoing handlebody colored by \(\mathbf b\), while
\(e_{\mathbf a}^{\mathrm{in}}\) is represented by the incoming handlebody
colored by \(\mathbf a\). Gluing these standard representatives to the bordism
\(M\) yields the closed extended manifold \(M_{\mathbf a,\mathbf b}\), and the
pairing is, by construction, the RT invariant of this closure.
\end{proof}

As a consequence, the surgery formula of Section~\ref{Surgery-formulas} gives explicit expressions
for matrix coefficients whenever the closure \(M_{\mathbf a,\mathbf b}\) is
presented by surgery.

\begin{corollary}\label{cor:bordism-surgery-coefficients-toral}
Assume that the closure \(M_{\mathbf a,\mathbf b}\) is presented by surgery on a
framed link with linking matrix \(\mathcal L_{\mathbf a,\mathbf b}\). Then
\[
\big\langle (e_{\mathbf b}^{\mathrm{out}})^\vee,
Z^{\mathrm{RT}}_{C(G_K,q_K)}(M)\,e_{\mathbf a}^{\mathrm{in}}\big\rangle
=
|G_K|^{-1/2}\,
p_+(K)^{\frac{-m-\sigma}{2}}\,
p_-(K)^{\frac{-m+\sigma}{2}}
\sum_{u\in G_K^m}
\exp\!\bigl(2\pi i\,Q_{\mathcal L_{\mathbf a,\mathbf b},K}(u)\bigr),
\]
where \(m\) is the number of surgery components and
\(\sigma=\sigma(\mathcal L_{\mathbf a,\mathbf b})\).
\end{corollary}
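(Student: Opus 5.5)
The corollary follows by composing two earlier results. First, Proposition~\ref{prop:bordism-matrix-coefficients-toral} expresses the matrix coefficient as a closed invariant of the closure. Second, Proposition~\ref{prop:RT-closed-surgery-GK} evaluates that closed invariant through the surgery sum over $G_K^m$. I would proceed as follows.

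\emph{Step 1.} By Proposition~\ref{prop:bordism-matrix-coefficients-toral}, the pairing $\langle (e_{\mathbf b}^{\mathrm{out}})^\vee, Z^{\mathrm{RT}}_{C(G_K,q_K)}(M)\,e_{\mathbf a}^{\mathrm{in}}\rangle$ equals $Z^{\mathrm{RT}}_{C(G_K,q_K)}(M_{\mathbf a,\mathbf b})$.

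\emph{Step 2.} Make precise what ``$M_{\mathbf a,\mathbf b}$ is presented by surgery on a framed link with linking matrix $\mathcal L_{\mathbf a,\mathbf b}$'' means. The closure carries the colored handlebody cores. Pushed into $S^3$ alongside the surgery link, these become additional colored components, so strictly one obtains a surgery link together with a colored sublink. I would read the hypothesis as saying that this data has been absorbed into a single framed link with $m$ components. The colors $\mathbf a,\mathbf b$ are then encoded either in $\mathcal L_{\mathbf a,\mathbf b}$ and its labelling, or, more honestly, by fixing the labels of the core components rather than summing over them.

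\emph{Step 3.} Evaluate the closed RT invariant of this presentation using the pointed ribbon rules, exactly as in the proof of Proposition~\ref{prop:RT-closed-surgery-GK}. Each coloring $u\in G_K^m$ contributes $\prod_{i<j}\Omega_K(u_i,u_j)^{\mathcal L_{ij}}\prod_i q_K(u_i)^{\mathcal L_{ii}}=\exp(2\pi i\,Q_{\mathcal L_{\mathbf a,\mathbf b},K}(u))$. Lemma~\ref{lem:QLK-well-defined-closed-prelim} shows this is independent of lifts. Multiplying by the Kirby normalization $|G_K|^{-1/2}p_+(K)^{(-m-\sigma)/2}p_-(K)^{(-m+\sigma)/2}$ gives the displayed formula.

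\emph{Step 4, the main obstacle: extended weights.} The left-hand side is the Maslov-corrected extended operator. The right-hand side is the raw scalar of Proposition~\ref{prop:RT-closed-surgery-GK}. I would choose the weight on $M_{\mathbf a,\mathbf b}$ so that it is the zero-weight extended manifold whose surgery presentation has signature $\sigma(\mathcal L_{\mathbf a,\mathbf b})$. Concretely, take the weight equal to $\sigma$ of the bounding $4$-manifold, as in Turaev's convention; the $p_\pm$ normalization already accounts for that signature. The identity then holds on the nose, and otherwise up to a factor $(p_+/|p_+|)^{n}$.

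I would also verify that Turaev's normalization of the handlebody vectors and covectors introduces no extra $|G_K|^{\pm1/2}$ factors. The $|G_K|^{-1/2}$ is then exactly $\mathcal D^{-1}$ from the closed formula, with all quantum dimensions equal to $1$.
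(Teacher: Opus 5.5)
Your Steps 1 and 3 are exactly the paper's proof, which is nothing more than Proposition~\ref{prop:bordism-matrix-coefficients-toral} followed by Proposition~\ref{prop:RT-closed-surgery-GK}. You are right that this composition silently replaces the Maslov-corrected closure invariant by the raw surgery scalar. However, Step 4 does not repair this, because the weight of the closure is not yours to choose. The handlebodies carry weight zero, so Lemma~\ref{lem:walker-weight} forces $n(M_{\mathbf a,\mathbf b})$ to be the weight of $M$ plus the Maslov term of the gluing. By the paper's own relation \eqref{eq:toral-corrected-vs-raw-closure}, what Steps 1 and 3 actually prove is
\[
\big\langle (e_{\mathbf b}^{\mathrm{out}})^\vee,\,Z^{\mathrm{RT}}_{C(G_K,q_K)}(M)\,e_{\mathbf a}^{\mathrm{in}}\big\rangle
=\kappa(K)^{-n(M_{\mathbf a,\mathbf b})}\cdot\bigl(\text{right-hand side of the display}\bigr),
\qquad
\kappa(K)^{-n}=e^{\frac{\pi i}{4}\sigma(K)\,n}.
\]
So the display holds on the nose only when $\sigma(K)\,n(M_{\mathbf a,\mathbf b})\equiv 0\pmod 8$, or for the uncorrected pairing. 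Your fallback factor $(p_+/|p_+|)^{n}$ is precisely $\kappa(K)^{-n}$, and this qualified statement is what you should assert. Your two prescriptions in Step 4 also disagree with each other. Weight $0$ reproduces the display. Weight $\sigma$, which is the value Proposition~\ref{prop:closure-weight-signature} assigns to canonical closures, instead gives $|G_K|^{-(m+1)/2}\sum_{u}\exp\bigl(2\pi i\,Q_{\mathcal L_{\mathbf a,\mathbf b},K}(u)\bigr)$. This is because the prefactor in the display equals $e^{-\frac{\pi i}{4}\sigma(K)\sigma}|G_K|^{-(m+1)/2}$, as computed in the proof of Proposition~\ref{prop:toral-RT-reciprocity-form}.

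On the colored cores, your ``more honest'' reading is the right one, but Step 3 as written does not implement it. The underlying manifold $|M_{\mathbf a,\mathbf b}|$ is the same for all labels, so a surgery matrix for it alone cannot see $\mathbf a,\mathbf b$. Summing $\prod_{i<j}\Omega_K(u_i,u_j)^{\mathcal L_{ij}}\prod_i q_K(u_i)^{\mathcal L_{ii}}$ over all of $G_K^m$ would make every matrix coefficient label-independent. That contradicts, for instance, the label-dependent torus pairing $\langle e_a^\vee,e_b\rangle=\Omega_K(a,b)$ of Proposition~\ref{prop:torus-pairing-toral}. The fix is as follows:
\begin{enumerate}
\item[(a)] add the cores $C_j$ as extra components with fixed labels $c_j$ (an $a_j$ or a $b_j$);
\item[(b)] sum only over colorings $u\in G_K^m$ of the $m$ surgery components;
\item[(c)] keep the cross factors $\Omega_K(u_i,c_j)^{\operatorname{lk}(L_i,C_j)}$, together with the core--core and core-framing factors.
\end{enumerate}
In other words, $Q_{\mathcal L_{\mathbf a,\mathbf b},K}(u)$ must mean the quadratic form of the full linking matrix evaluated at $(u,\mathbf a,\mathbf b)$, with $m$ and $\sigma$ taken from the surgery block alone. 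With that reading, the lift-independence argument of Lemma~\ref{lem:QLK-well-defined-closed-prelim} still applies and Step 3 goes through. The normalization check in your last paragraph is unnecessary once Proposition~\ref{prop:bordism-matrix-coefficients-toral} is taken as given.
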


\begin{proof}
Apply Proposition~\ref{prop:bordism-matrix-coefficients-toral} and then
Proposition~\ref{prop:RT-closed-surgery-GK}.
\end{proof}

Finally, we recall the functoriality statement in the form needed later.

\begin{prop}\label{prop:rt-gluing-law-toral}
Let
\[
M_1:(\Sigma_0,\lambda_0)\to(\Sigma_1,\lambda_1),
\qquad
M_2:(\Sigma_1,\lambda_1)\to(\Sigma_2,\lambda_2)
\]
be extended bordisms. Then the Maslov-corrected RT operators satisfy the strict
gluing law
\[
Z^{\mathrm{RT}}_{C(G_K,q_K)}(M_2\circ M_1)
=
Z^{\mathrm{RT}}_{C(G_K,q_K)}(M_2)\circ Z^{\mathrm{RT}}_{C(G_K,q_K)}(M_1).
\]
Equivalently, before Maslov anomaly normalization the underlying operators satisfy the
usual projective composition law with Maslov anomaly; see
\cite[Section~IV.3]{Turaev1994}.
\end{prop}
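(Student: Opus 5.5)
The strict gluing law will be obtained by unwinding the definition of the Maslov-corrected operators in Turaev's extended formalism and checking that the anomaly factors cancel. Write $\widetilde Z(M)$ for the underlying uncorrected operator of an extended bordism, computed by the handlebody-pairing rule of Proposition~\ref{prop:bordism-matrix-coefficients-toral}. For an extended bordism $(M,n)$, the corrected operator is
\[
Z^{\mathrm{RT}}_{C(G_K,q_K)}(M,n)=\kappa^{\,n}\,\widetilde Z(M),
\]
where $\kappa$ is determined by the Gauss sums, $\kappa^{2}=p_+(K)/p_-(K)$, so $\kappa$ is an eighth root of unity of the form $e^{\frac{\pi i}{4}c}$ with $c\equiv\sigma(K)\pmod 8$ by Milgram's formula.

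The first step is the projective gluing law for the uncorrected operators:
\[
\widetilde Z(M_2\circ M_1)=\kappa^{-\mu(\lambda_{M_1\ast},\lambda_1,\lambda_{M_2}^{\ast})}\,\widetilde Z(M_2)\circ\widetilde Z(M_1).
\]
Here $\mu$ is the Maslov--Kashiwara index of the three Lagrangians in $H_1(\Sigma_1;\mathbb R)$: the one induced from $M_1$, the chosen $\lambda_1$, and the one induced from $M_2$.

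To prove this, I would use Proposition~\ref{prop:bordism-matrix-coefficients-toral} to compute matrix coefficients. Inserting the resolution of the identity on $\mathcal V^{\mathrm{RT}}(\Sigma_1,\lambda_1)$ given by the handlebody basis of Proposition~\ref{prop:surface-basis-toral} and its dual from Proposition~\ref{prop:torus-pairing-toral} reduces the question to comparing two quantities. The first is a surgery presentation of the closure $(M_2\circ M_1)_{\mathbf a,\mathbf c}$. The second is the sum over $\mathbf b\in G_K^{g_1}$ of products of closures $(M_1)_{\mathbf a,\mathbf b}$ and $(M_2)_{\mathbf b,\mathbf c}$.

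Both sides are given by Corollary~\ref{cor:bordism-surgery-coefficients-toral}. Summing over $\mathbf b$ amounts to adjoining surgery components along the cores in $\Sigma_1$, using the Fourier inversion $\sum_b\Omega_K(a,b)=|G_K|\delta_{a,0}$. The only discrepancy is the Kirby normalization $p_\pm(K)^{(\cdot)}$: the signature of the glued linking matrix differs from the sum of the signatures by Wall's non-additivity term. Wall's theorem identifies that term with $\mu(\lambda_{M_1\ast},\lambda_1,\lambda_{M_2}^{\ast})$, which produces the displayed anomaly.

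The second step is weight bookkeeping. By Turaev's convention (Section~IV.9), the weight of the composite is
\[
n=n_1+n_2-\mu(\lambda_{M_1\ast},\lambda_1,\lambda_{M_2}^{\ast}).
\]
Multiplying the projective law by $\kappa^{n}$ therefore gives exactly $\kappa^{n_1}\widetilde Z(M_1)$ composed with $\kappa^{n_2}\widetilde Z(M_2)$. That is the strict law, and the "equivalently" clause is this same computation read backwards.

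I expect the main obstacle to be the first step: matching Wall's non-additivity correction with the Gauss-sum normalization. The sign conventions for the order of Lagrangians in $\mu$ must agree with those in the weight rule. I would first verify this in the basic case of two solid tori glued to $S^3$, using the $S$-matrix formula of Proposition~\ref{prop:torus-pairing-toral} and $p_+p_-=|G_K|$, which fixes the conventions. After that, I would invoke \cite[Section~IV.3--IV.9]{Turaev1994}, which proves the general case for any modular category; it applies here since $\mathcal C(G_K,q_K)$ is modular by Proposition~\ref{prop:pointed-category-general}.
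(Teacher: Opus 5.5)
The backbone of your argument is the same as the paper's. The paper's proof consists entirely of citing Turaev's functoriality theorem for the Maslov-corrected extended theory (\cite[Section~IV.8--IV.9]{Turaev1994}), which applies because $\mathcal C(G_K,q_K)$ is modular. Your split into a projective gluing law plus weight bookkeeping is the right way to unwind that. However, the explicit cancellation you write down does not close. Abbreviate $Z:=Z^{\mathrm{RT}}_{C(G_K,q_K)}$ and take your three conventions: $Z=\kappa^{n}\widetilde Z$, $\widetilde Z(M_2\circ M_1)=\kappa^{-\mu}\,\widetilde Z(M_2)\circ\widetilde Z(M_1)$, and $n=n_1+n_2-\mu$. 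These give
\[
Z(M_2\circ M_1)=\kappa^{\,n_1+n_2-\mu}\,\kappa^{-\mu}\,\widetilde Z(M_2)\circ\widetilde Z(M_1)=\kappa^{-2\mu}\,Z(M_2)\circ Z(M_1).
\]
Here $\kappa^{-2\mu}=e^{-\frac{\pi i}{2}\sigma(K)\mu}$ is not $1$ in general; for example, $K=[2]$ and $\mu=1$ give $-i$. For the anomaly to cancel, the exponent in the projective law must be $+\mu$ when the weight defect is $-\mu$ and the correction is $\kappa^{+n}$, or one of the other two signs must be flipped.

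A related point: $\kappa^2=p_+(K)/p_-(K)$ determines $\kappa$ only up to sign, and Milgram's formula does not remove that ambiguity. Define $\kappa:=|G_K|^{-1/2}p_+(K)=e^{\frac{\pi i}{4}\sigma(K)}$ directly. This is the paper's $\kappa(K)^{-1}$, so your $\kappa^{n}$ then agrees with the paper's correction $\kappa(K)^{-n}$.

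The calibration you propose cannot detect this sign error. When two solid tori are glued into $S^3$ and $\lambda$ is the meridian of one of them, as in Proposition~\ref{prop:torus-pairing-toral}, the Maslov triple has a repeated entry. So $\mu=0$ and no anomaly appears at all. Meanwhile $p_+(K)p_-(K)=|G_K|$ only constrains $|\kappa|$. To fix the relative sign between the anomaly exponent and the weight rule, you need a gluing with $\mu=\pm1$. One example is the modular relation $(ST)^3\propto S^2$ on $\mathbb C[G_K]$, whose scalar is the normalized Gauss sum $|G_K|^{-1/2}p_\pm(K)$, depending on conventions. Once the three signs are made mutually consistent, or simply imported wholesale from Turaev as the paper does, your argument goes through.
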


\begin{proof}
This is the standard functoriality statement of Turaev's extended
Reshetikhin--Turaev theory; see \cite[Section~IV.8–IV.9]{Turaev1994}. In the present
paper we work with the Maslov-corrected extended theory, so the corrected
operators compose strictly.
\end{proof}

Propositions~\ref{prop:torus-state-space-toral}--\ref{prop:rt-gluing-law-toral} give the RT boundary theory in the form needed
for Section~\ref{section4}: preferred bases indexed by \(G_K^{\,g}\), explicit torus
pairings determined by the bicharacter \(\Omega_K\), and matrix coefficients
computed by closure with colored handlebodies. This is exactly the structure
that will be compared with the toral Chern--Simons boundary theory.

\section{Equivalence of Toral Chern--Simons and Reshetikhin–Turaev Theories}
\label{section4}
\subsection{\texorpdfstring{Closed equivalence for \(\mathbb T\cong U(1)^n\)}{Closed equivalence for T isomorphic to U(1)n}}
\label{subsec:toral-closed-comparison}

We now prove the closed equivalence theorem for the toral theory. We first rewrite the toral Chern--Simons partition function in a form determined by the quadratic refinement of the torsion linking pairing. We then show that the raw Reshetikhin--Turaev scalar associated with the finite quadratic module \((G_K,q_K)\) is given by the same exact finite quadratic Gauss sum.

Throughout this subsection, $\mathbb T=\mathfrak t/\Lambda\cong U(1)^n$ is a compact torus, and $K:\Lambda\times\Lambda\to \mathbb Z$ is an even, integral, nondegenerate symmetric bilinear form. After choosing a \(\Lambda\)-basis, we identify \(K\) with an integral symmetric matrix
\[
K\in M_n(\mathbb Z),\qquad K^\top=K,\qquad \det K\neq 0.
\]
Let
\[
G_K:=\Lambda^*/K\Lambda,
\qquad
|G_K|=|\det K|,
\qquad
q_K([u])=\exp\!\bigl(\pi i\,u^\top K^{-1}u\bigr).
\]

Let \(M=M_L\) be presented by integral surgery on an oriented framed
\(m\)-component link \(L\subset S^3\) with linking matrix \(\mathcal L\).
Write
\[
\sigma=\sigma(\mathcal L),
\qquad
\nu=b_1(M),
\qquad
\rho=m-\nu.
\]
Choose an integral change of basis \(U\in GL_m(\mathbb Z)\) such that
\[
U^\top \mathcal L U=
\begin{pmatrix}
\mathcal L_{\mathrm{reg}}&0\\
0&0
\end{pmatrix},
\qquad
\mathcal L_{\mathrm{reg}}\in M_\rho(\mathbb Z),
\]
with \(\mathcal L_{\mathrm{reg}}\) nondegenerate over \(\mathbb Q\). Then
\begin{equation}\label{eq:toral-torsion-regular-block}
\Tors H_1(M;\mathbb Z)\cong
\mathbb Z^\rho/\mathcal L_{\mathrm{reg}}\mathbb Z^\rho,
\qquad
\bigl|\Tors H_1(M;\mathbb Z)\bigr|
=
\bigl|\det(\mathcal L_{\mathrm{reg}})\bigr|.
\end{equation}
We also recall that
\begin{equation}\label{eq:toral-mM}
m_M=\frac12\bigl(b_1(M)-1\bigr)=\frac12(\nu-1),
\end{equation}
in the normalization of the closed toral partition function Proposition \ref{thm:toral-boundary-vector}.

The proof has three ingredients: first, the value of the torsion half-density integral on the identity component of the
flat moduli space. Second,  the description of the toral Chern--Simons phases on torsion components as a finite quadratic Gauss sum. Third, the reciprocity formula converting the RT surgery scalar into the same finite quadratic Gauss sum.

\begin{lemma}\label{lem:toral-torsion-integral}
Let \(M\) be a closed connected oriented \(3\)--manifold. Then the
translation-invariant half-density \((T_M)^{1/2}\) appearing in Proposition \ref{thm:toral-boundary-vector}  satisfies
\begin{equation}\label{eq:toral-torsion-integral}
\int_{\mathcal M_M(\mathbb T)_0}(T_M)^{1/2}
=
\bigl|\Tors H_1(M;\mathbb Z)\bigr|^{n/2}
=
\bigl|\det(\mathcal L_{\mathrm{reg}})\bigr|^{n/2},
\end{equation}
where
\[
\mathcal M_M(\mathbb T)_0
\cong
H^1(M;\mathfrak t)/H^1(M;\Lambda)
\]
is the identity component of the moduli space of flat \(\mathbb T\)-connections.
\end{lemma}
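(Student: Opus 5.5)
The plan is to reduce to the rank-one case $\mathbb T=U(1)$ by a tensor-product decomposition, and then compute the Reidemeister torsion of a closed $3$--manifold with trivial real coefficients directly.

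\textbf{Step 1: reduction to rank one.} After choosing a $\Lambda$-basis, the coefficient system splits as $\mathfrak t\cong\mathbb R\otimes\mathfrak t=\mathbb R^n$, with integral lattice $\Lambda\cong\mathbb Z^n$. The cochain complex $C^\bullet(M;\mathfrak t)$ is therefore $C^\bullet(M;\mathbb R)\otimes\mathfrak t$. Under this splitting:
\begin{itemize}
\item the torsion $T_M(\mathfrak t)$ is the $n$-th tensor power of the scalar torsion $T_M(\mathbb R)$, as an element of $\det H^\bullet(M;\mathbb R)^{\otimes n}$;
\item the identity component $\mathcal M_M(\mathbb T)_0=H^1(M;\mathfrak t)/H^1(M;\Lambda)$ is the product of $n$ copies of $H^1(M;\mathbb R)/H^1(M;\mathbb Z)$.
\end{itemize}
Since the density $(T_M)^{1/2}$ is translation invariant, its integral is the covolume of the lattice $H^1(M;\Lambda)$ measured by it. This covolume factors multiplicatively, so the integral is $\bigl(\int_{H^1(M;\mathbb R)/H^1(M;\mathbb Z)}T_M(\mathbb R)^{1/2}\bigr)^n$. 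One must also check that the normalization of $\mu_{X,p}$ in Proposition~\ref{prop:torsion-half-density} does not introduce any $K$-dependence on the identity component. I expect this to hold, since the Reidemeister torsion uses only the lattice $\Lambda$ and not $K$; the factors of $|\det K|$ are carried by $m_M$ instead.

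\textbf{Step 2: the rank-one computation.} It remains to show that the rank-one integral equals $|\Tors H_1(M;\mathbb Z)|^{1/2}$. I would use the standard fact that for trivial real coefficients the Reidemeister torsion, computed with respect to integral bases of $H^\bullet(M;\mathbb Z)/\mathrm{Tors}$, has absolute value $|\Tors H_1(M;\mathbb Z)|^{\pm1}$. The sign of the exponent is fixed by the convention for the direction of the complex. This follows from the Smith normal form of the integral cellular differentials: the torsion of a based free $\mathbb Z$-complex is $\prod$(torsion orders)$^{(-1)^{i}}$. By Poincar\'e duality and universal coefficients, the only torsion is $\Tors H_1\cong\Tors H^2$, and it appears once.

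The density $T_M^{1/2}$ on $H^1$ is obtained from the torsion after trivializing $H^0,H^3$ by the fundamental and point classes and using Poincar\'e duality to pair $H^2$ with $H^1$. With these identifications, the density evaluated on an integral basis of $H^1(M;\mathbb Z)$ gives $|\Tors H_1|^{1/2}$. This equals the lattice covolume, i.e.\ the integral.

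\textbf{Step 3: expressing the result through the surgery matrix.} The second equality in \eqref{eq:toral-torsion-integral} is immediate from \eqref{eq:toral-torsion-regular-block}. That equation can also be checked directly by presenting $H_1(M_L;\mathbb Z)$ as $\operatorname{coker}\mathcal L$ and block-diagonalizing by $U$.

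\textbf{Main obstacle.} The delicate point is Step 2, specifically fixing conventions: whether the exponent is $+\tfrac12$ rather than $-\tfrac12$, and how the Poincar\'e-duality identification of $\det H^2$ with $(\det H^1)^*$ interacts with the lattice. I would settle this by checking two test cases:
\begin{itemize}
\item the lens space $L(p,1)$, where $H^1=0$ and the integral must be $p^{1/2}$;
\item $S^1\times S^2$, where it must be $1$.
\end{itemize}
These cases calibrate the convention. The general case then follows from multiplicativity of torsion under the Smith-normal-form splitting.
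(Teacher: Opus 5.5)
Your proposal follows the paper's proof: the paper also splits $C^\bullet(M;\mathfrak t)\cong C^\bullet(M;\mathbb R)^{\oplus n}$, uses multiplicativity of Reidemeister torsion to write the integral as the $n$-th power of the rank-one integral, and reads off the second equality from the surgery description of $\Tors H_1(M;\mathbb Z)$. The only difference is that the paper does not re-derive the rank-one value $|\Tors H_1(M;\mathbb Z)|^{1/2}$ but cites it from \cite[Theorem~5.2]{Galviz1}, which is where the exponent sign you worry about is fixed; your $L(p,1)$ ``calibration'' presupposes that answer, so either cite that result or read the sign off the torsion convention used to define $\mu_{X,p}$.
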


\begin{proof}
Since \(\mathfrak t\cong\mathbb R^n\) is a trivial real local system, $C^\bullet(M;\mathfrak t)\cong C^\bullet(M;\mathbb R)^{\oplus n}.$ Reidemeister torsion is multiplicative under direct sums of coefficient systems, hence $T_M(\mathfrak t)=T_M(\mathbb R)^{\otimes n}.$ Passing to positive square roots, the half-density on
\[
\mathcal M_M(\mathbb T)_0
\cong
\bigl(H^1(M;\mathbb R)/H^1(M;\mathbb Z)\bigr)^n
\]
is the exterior product of the \(n\) rank--one half-densities. Therefore
\[
\int_{\mathcal M_M(\mathbb T)_0}(T_M)^{1/2}
=
\left(
\int_{\mathcal M_M(U(1))_0}(T_M)^{1/2}
\right)^n.
\]
By the rank--one formula proved in \cite[Theorem~5.2]{Galviz1},
\[
\int_{\mathcal M_M(U(1))_0}(T_M)^{1/2}
=
\bigl|\Tors H_1(M;\mathbb Z)\bigr|^{1/2}.
\]
Hence
\[
\int_{\mathcal M_M(\mathbb T)_0}(T_M)^{1/2}
=
\bigl|\Tors H_1(M;\mathbb Z)\bigr|^{n/2}.
\]
The second equality in \eqref{eq:toral-torsion-integral} follows from
\eqref{eq:toral-torsion-regular-block}.
\end{proof}

Fix once and for all the chosen basis of \(\Lambda\), so that \(\Lambda\cong\mathbb Z^n\).
Then
\[
H^2(M;\Lambda)\cong H^2(M;\mathbb Z^n)\cong H^2(M;\mathbb Z)^n,
\]
and therefore $\Tors H^2(M;\Lambda)\cong \bigl(\Tors H^2(M;\mathbb Z)\bigr)^n.$ Using also \[\Tors H^2(M;\mathbb Z)\cong \Tors H_1(M;\mathbb Z),\]
we obtain
\begin{equation}\label{eq:toral-torsion-sectors-identification}
\Tors H^2(M;\Lambda)
\cong
\mathbb Z^{\rho n}/(\mathcal L_{\mathrm{reg}}\otimes I_n)\mathbb Z^{\rho n}.
\end{equation}

\begin{lemma}\label{lem:toral-quadratic-refinement}
Define
\begin{equation}\label{eq:toral-q-LK}
q_{\mathcal L,K}([x])
\equiv
\frac12\,x^\top(\mathcal L_{\mathrm{reg}}^{-1}\otimes K)x
\pmod 1,
\qquad
[x]\in
\mathbb Z^{\rho n}/(\mathcal L_{\mathrm{reg}}\otimes I_n)\mathbb Z^{\rho n}.
\end{equation}
Then \(q_{\mathcal L,K}\) is well defined.
\end{lemma}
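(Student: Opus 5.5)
The plan is to copy the proof of Lemma~\ref{lem:QLK-well-defined-closed-prelim}, with the roles of the two factors swapped. There the inverse sat on \(K\) and the lattice being quotiented was \(K\Lambda\). Here the inverse sits on \(\mathcal L_{\mathrm{reg}}\) and we quotient by \((\mathcal L_{\mathrm{reg}}\otimes I_n)\mathbb Z^{\rho n}\).

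First, two preliminary observations.
\begin{itemize}
\item \(\mathcal L_{\mathrm{reg}}\) is symmetric, because it is a diagonal block of the symmetric matrix \(U^\top\mathcal L U\). It is also invertible over \(\mathbb Q\), by the choice of block form.
\item Consequently \(A:=\mathcal L_{\mathrm{reg}}^{-1}\otimes K\) is a symmetric matrix with rational entries. The expression \(\tfrac12 x^\top A x\) is therefore a rational number for \(x\in\mathbb Z^{\rho n}\), and its class in \(\mathbb Q/\mathbb Z\) makes sense.
\end{itemize}

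For the main computation, replace the lift \(x\) by \(x'=x+(\mathcal L_{\mathrm{reg}}\otimes I_n)y\) with \(y\in\mathbb Z^{\rho n}\), and write \(y=(y_1,\dots,y_\rho)\) with \(y_i\in\mathbb Z^n\). The mixed-product rule gives
\[
(\mathcal L_{\mathrm{reg}}^{-1}\otimes K)(\mathcal L_{\mathrm{reg}}\otimes I_n)=I_\rho\otimes K.
\]
Using this together with the symmetry of \(A\) and of \(\mathcal L_{\mathrm{reg}}\otimes I_n\), the variation is
\[
\tfrac12 x'^\top A x'-\tfrac12 x^\top A x
= x^\top (I_\rho\otimes K)\,y+\tfrac12\,y^\top(\mathcal L_{\mathrm{reg}}\otimes K)\,y .
\]
Here the two cross terms combine into one because they are transposes of each other.

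It then remains to show that both terms are integers.
\begin{itemize}
\item The first term is an integer because \(x\), \(y\) and \(K\) are integral.
\item For the second term, expand it as
\[
\sum_{i<j}(\mathcal L_{\mathrm{reg}})_{ij}\,y_i^\top K y_j+\tfrac12\sum_i(\mathcal L_{\mathrm{reg}})_{ii}\,y_i^\top K y_i .
\]
The off-diagonal sum is integral since all entries are integers. The diagonal sum is integral because \(K\) is even, so \(y_i^\top K y_i\in 2\mathbb Z\).
\end{itemize}
Hence the variation lies in \(\mathbb Z\), and \(q_{\mathcal L,K}\) is well defined modulo \(1\).

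There is no real obstacle. The only points needing care are checking that \(\mathcal L_{\mathrm{reg}}\) is symmetric, so the cross terms genuinely merge, and recognising that the factor \(\tfrac12\) is absorbed by the evenness of \(K\) rather than of \(\mathcal L_{\mathrm{reg}}\). The latter matters because the framings \(\mathcal L_{ii}\) may be odd.
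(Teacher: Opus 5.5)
Your proof is correct and is essentially the paper's own argument. It uses the same variation formula $x^\top(I_\rho\otimes K)y+\tfrac12\,y^\top(\mathcal L_{\mathrm{reg}}\otimes K)y$ and the same expansion, with the diagonal part made integral by the evenness of $K$. Your extra checks, namely that $\mathcal L_{\mathrm{reg}}$ is symmetric so the cross terms merge, and that evenness of $K$ rather than of $\mathcal L_{\mathrm{reg}}$ is what absorbs the $\tfrac12$, are correct and make explicit what the paper leaves implicit.
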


\begin{proof}
Let \(x'=x+(\mathcal L_{\mathrm{reg}}\otimes I_n)z\), where \(z\in\mathbb Z^{\rho n}\).
Then
\[
\frac12\,x'^\top(\mathcal L_{\mathrm{reg}}^{-1}\otimes K)x'
-
\frac12\,x^\top(\mathcal L_{\mathrm{reg}}^{-1}\otimes K)x
=
x^\top(I_\rho\otimes K)z
+
\frac12\,z^\top(\mathcal L_{\mathrm{reg}}\otimes K)z.
\]
The first term is integral because \(x,z\in\mathbb Z^{\rho n}\) and \(K\) is integral.
Write \(z=(z_1,\dots,z_\rho)\) with \(z_i\in\mathbb Z^n\). Then
\[
\frac12\,z^\top(\mathcal L_{\mathrm{reg}}\otimes K)z
=
\frac12\sum_i (\mathcal L_{\mathrm{reg}})_{ii}\,z_i^\top K z_i
+
\sum_{i<j}(\mathcal L_{\mathrm{reg}})_{ij}\,z_i^\top K z_j.
\]
Since \(K\) is even, each \(z_i^\top K z_i\in 2\mathbb Z\), so the diagonal contribution is
integral; the off-diagonal contribution is integral because \(K\) is integral. Hence the
difference lies in \(\mathbb Z\), proving that \(q_{\mathcal L,K}\) is well defined modulo \(1\).
\end{proof}

\begin{prop}\label{prop:toral-CS-normal-form}
With the normalization of the closed toral partition function in Proposition \ref{thm:toral-boundary-vector}, one has
\begin{equation}\label{eq:toral-CS-geometric}
Z^{\mathrm{CS,raw}}_{\mathbb T,K}(M)
=
\frac{|G_K|^{m_M}}{\#\,\Tors H^2(M;\Lambda)}
\sum_{p\in\Tors H^2(M;\Lambda)}
\int_{\mathcal M_{M,p}(\mathbb T)}
\sigma_{M,p}\,(T_M)^{1/2},
\end{equation}
and this expression reduces to the finite quadratic Gauss sum
\begin{equation}\label{eq:toral-CS-torsion-normal-form}
Z^{\mathrm{CS,raw}}_{\mathbb T,K}(M)
=
|G_K|^{m_M}\,
|\det(\mathcal L_{\mathrm{reg}})|^{-n/2}
\sum_{[x]\in \mathbb Z^{\rho n}/(\mathcal L_{\mathrm{reg}}\otimes I_n)\mathbb Z^{\rho n}}
\exp\!\Bigl(
\pi i\,x^\top(\mathcal L_{\mathrm{reg}}^{-1}\otimes K)x
\Bigr).
\end{equation}
\end{prop}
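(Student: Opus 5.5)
The first formula \eqref{eq:toral-CS-geometric} is just the closed partition function \eqref{eq:toral-closed-partition-function} rewritten with $|\det K|=|G_K|$. The substance is the reduction to \eqref{eq:toral-CS-torsion-normal-form}, which I would carry out in three steps.

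\textbf{Step 1: factor each sector integral.} For each torsion class $p$, Proposition~\ref{prop:torsion-moduli} says that $\mathcal M_{M,p}(\mathbb T)$ is an affine torus modeled on the identity component $\mathcal M_M(\mathbb T)_0$. The half-density $(T_M)^{1/2}$ is translation invariant, so its total mass on every component equals the value computed in Lemma~\ref{lem:toral-torsion-integral}, namely $|\det\mathcal L_{\mathrm{reg}}|^{n/2}$. The section $\sigma_{M,p}$ is covariantly constant by Proposition~\ref{prop:CS-section}. Since $M$ is closed, the line bundle is trivial, so $\sigma_{M,p}$ is a constant phase $e^{2\pi i\,\mathrm{CS}(p)}$ on its component. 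This constant is the Chern--Simons invariant of a flat connection in the torsion sector $p$; it is independent of the chosen point because the $\mathcal M_M(\mathbb T)_0$ directions contribute nothing to the action for a closed manifold. Hence
\[
\int_{\mathcal M_{M,p}(\mathbb T)}\sigma_{M,p}(T_M)^{1/2}=|\det\mathcal L_{\mathrm{reg}}|^{n/2}\,e^{2\pi i\,\mathrm{CS}(p)} .
\]

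\textbf{Step 2: count the sectors and cancel prefactors.} By \eqref{eq:toral-torsion-sectors-identification}, $\#\Tors H^2(M;\Lambda)=|\det\mathcal L_{\mathrm{reg}}|^{n}$. Combining this with Step 1, the prefactor becomes
\[
|G_K|^{m_M}\,|\det\mathcal L_{\mathrm{reg}}|^{\,n/2-n}=|G_K|^{m_M}\,|\det\mathcal L_{\mathrm{reg}}|^{-n/2}.
\]
This matches \eqref{eq:toral-CS-torsion-normal-form}, provided the phase is $e^{2\pi i\,\mathrm{CS}(p)}=\exp\bigl(\pi i\,x^\top(\mathcal L_{\mathrm{reg}}^{-1}\otimes K)x\bigr)$, where $x$ is the lift of $p$ under \eqref{eq:toral-torsion-sectors-identification}. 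By Lemma~\ref{lem:toral-quadratic-refinement}, the right-hand side of that identity is well defined.

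\textbf{Step 3: compute the phase.} This is the main obstacle: identifying the torsion-sector Chern--Simons phase with the quadratic refinement $q_{\mathcal L,K}$. I would first reduce to rank one by diagonal bookkeeping. With $\Lambda\cong\mathbb Z^n$, a torsion bundle $p=(p_1,\dots,p_n)$ with $p_\alpha\in\Tors H^2(M;\mathbb Z)$ has an abelian action that is $K$-bilinear in the components. For an even $K$, no spin structure is needed, and the phase is
\[
\mathrm{CS}(p)\equiv\tfrac12\sum_{\alpha,\beta}K_{\alpha\beta}\,\mathrm{lk}(p_\alpha,p_\beta)\pmod 1,
\]
where $\mathrm{lk}$ is the torsion linking form, and the diagonal terms use the quadratic refinement supplied by evenness. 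I would justify this by computing via a $4$-manifold bound. Take $W$ to be the trace of the surgery on $L$, extend the bundle rationally over $W$, and evaluate $\tfrac12\int_W K(c_1\wedge c_1)$, as in the rank-one argument of \cite[Theorem~5.2]{Galviz1}. In the surgery presentation, the intersection form of $W$ is $\mathcal L$. After the change of basis $U$, a torsion class $p_\alpha$ is represented by $x_\alpha\in\mathbb Z^\rho$ modulo $\mathcal L_{\mathrm{reg}}\mathbb Z^\rho$. Its rational extension is $\mathcal L_{\mathrm{reg}}^{-1}x_\alpha$, the null block contributes nothing, and
\[
\mathrm{lk}(p_\alpha,p_\beta)=x_\alpha^\top\mathcal L_{\mathrm{reg}}^{-1}x_\beta \pmod 1 .
\]
Assembling the components with $K$ gives $\tfrac12\,x^\top(\mathcal L_{\mathrm{reg}}^{-1}\otimes K)x$. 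The sign convention should be fixed by matching the rank-one case $K=(k)$ of \cite{Galviz1}. Substituting this phase into the sum over $p$ yields \eqref{eq:toral-CS-torsion-normal-form}.
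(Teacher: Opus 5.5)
Your proposal is correct and follows the same route as the paper. It factors each sector integral using the translation invariance of $(T_M)^{1/2}$ and the constancy of $\sigma_{M,p}$, inserts Lemma~\ref{lem:toral-torsion-integral}, and identifies $\sigma_{M,p}$ with $\exp(2\pi i\,q_{\mathcal L,K}(p))$. Beyond the paper, you make explicit the count $\#\Tors H^2(M;\Lambda)=|\det\mathcal L_{\mathrm{reg}}|^{n}$ that the paper uses silently to turn $|\det\mathcal L_{\mathrm{reg}}|^{n/2}/\#\Tors H^2(M;\Lambda)$ into $|\det\mathcal L_{\mathrm{reg}}|^{-n/2}$, and you sketch the surgery-trace computation of the phase where the paper simply cites the toral analogue of \cite[Theorem~5.3]{Galviz1}.
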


\begin{proof}
Equation \eqref{eq:toral-CS-geometric} is exactly the closed toral partition function of equation \eqref{eq:toral-closed-partition-function}. For closed \(M\), each connected component
\(\mathcal M_{M,p}(\mathbb T)\) is an affine translate of the identity component
\[
\mathcal M_M(\mathbb T)_0\cong H^1(M;\mathfrak t)/H^1(M;\Lambda),
\]
the half-density \((T_M)^{1/2}\) is translation invariant, and \(\sigma_{M,p}\) is constant on
the component indexed by \(p\). Hence
\[
Z^{\mathrm{CS,raw}}_{\mathbb T,K}(M)
=
\frac{|G_K|^{m_M}}{\#\,\Tors H^2(M;\Lambda)}
\left(
\sum_{p\in\Tors H^2(M;\Lambda)}\sigma_{M,p}
\right)
\int_{\mathcal M_M(\mathbb T)_0}(T_M)^{1/2}.
\]
By Lemma~\ref{lem:toral-torsion-integral},
\[
\int_{\mathcal M_M(\mathbb T)_0}(T_M)^{1/2}
=
|\det(\mathcal L_{\mathrm{reg}})|^{n/2}.
\]
Moreover, by the toral analogue of the rank--one identification
\cite[Theorem~5.3]{Galviz1}, the torsion component indexed by
\(
p\in\Tors H^2(M;\Lambda)
\)
contributes the phase
\[
\sigma_{M,p}=\exp\!\bigl(2\pi i\,q_{\mathcal L,K}(p)\bigr),
\]
so that, under the identification
\eqref{eq:toral-torsion-sectors-identification},
\[
\sum_{p\in\Tors H^2(M;\Lambda)}\sigma_{M,p}
=
\sum_{[x]\in \mathbb Z^{\rho n}/(\mathcal L_{\mathrm{reg}}\otimes I_n)\mathbb Z^{\rho n}}
\exp\!\Bigl(
\pi i\,x^\top(\mathcal L_{\mathrm{reg}}^{-1}\otimes K)x
\Bigr).
\]
Substituting these two identities into \eqref{eq:toral-CS-geometric} gives
\eqref{eq:toral-CS-torsion-normal-form}.
\end{proof}

For the pointed modular category \(\mathcal C(G_K,q_K)\), write
\begin{equation}\label{eq:toral-pm-gauss}
p_\pm(K):=\sum_{u\in G_K} q_K(u)^{\pm1}.
\end{equation}

\begin{lemma}\label{lem:toral-pm-evaluation}
One has
\begin{equation}\label{eq:toral-pplus-eval}
p_+(K)=e^{\frac{\pi i}{4}\sigma(K)}\,|G_K|^{1/2},
\qquad
p_-(K)=e^{-\frac{\pi i}{4}\sigma(K)}\,|G_K|^{1/2}.
\end{equation}
\end{lemma}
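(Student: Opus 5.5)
This is the Milgram formula for the discriminant form of an even lattice, and I would prove it through Gauss-sum reciprocity, equivalently the Weil reciprocity (Plancherel) argument for theta functions.

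\emph{Step 1 (reduce to the positive sign).} Since $q_K(u)^{-1}=\overline{q_K(u)}$, we have $p_-(K)=\overline{p_+(K)}$. So the second formula follows from the first by complex conjugation, and it suffices to compute $p_+(K)$.

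\emph{Step 2 (reciprocity).} I would use the Krazer--Siegel / Turaev--Deloup reciprocity formula. Let $A$ be a nondegenerate symmetric $r\times r$ real matrix that is integral on a lattice $\mathbb Z^r$ and even. Let $B$ be an integral symmetric $n\times n$ matrix, and let $\psi\in\mathbb Q^n$ satisfy the usual evenness condition. Then
\[
\sum_{x\in \mathbb Z^{n}/B\mathbb Z^{n}} e^{\pi i\,x^\top B^{-1}x}
=\frac{e^{\frac{\pi i}{4}\sigma(B)}\,|\det B|^{1/2}}{|\det A|^{n/2}}\cdots
\]
For the present lemma the simplest instance suffices: take the $1\times 1$ matrix $A=(1)$ on one side and $K$ on the other. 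This gives
\[
\sum_{u\in \Lambda^*/K\Lambda} e^{\pi i\,u^\top K^{-1}u}
=|\det K|^{1/2}e^{\frac{\pi i}{4}\sigma(K)}\sum_{v\in \Lambda/\Lambda} e^{-\pi i\,v^\top K v}.
\]
The right-hand sum has a single term, and that term equals $1$ (evenness of $K$ makes the phase trivial on $\Lambda$). Since $|G_K|=|\det K|$, this is exactly the claim for $p_+(K)$.

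\emph{Step 3 (self-contained derivation).} To avoid citing reciprocity, I would derive it directly:
\begin{itemize}
\item Consider the theta function $\theta(\tau)=\sum_{\lambda\in\Lambda}e^{\pi i\,\tau\,\lambda^\top K\lambda}$, regularized by replacing $K$ with $K+i\epsilon$-positive parts, or better by using $f(x)=e^{\pi i\,x^\top K x}e^{-\epsilon|x|^2}$.
\item Apply Poisson summation to the function $x\mapsto e^{\pi i (x+ K^{-1}u)^\top K(x+K^{-1}u)}$ summed over cosets of $\Lambda$ in $K^{-1}\Lambda^*$.
\item Use the Fresnel integral $\int_{\mathbb R^n}e^{\pi i x^\top Kx}dx=|\det K|^{-1/2}e^{\frac{\pi i}{4}\sigma(K)}$, which produces the factor $e^{\frac{\pi i}{4}\sigma(K)}$.
\item Compare the asymptotics as $\epsilon\to 0$.
\end{itemize}
An alternative route is the Milgram formula as usually proved: write $p_+(K)=\sum_{u\in G_K}q_K(u)$ and show $|p_+(K)|^2=|G_K|$ by expanding $|p_+|^2=\sum_{u,w}q_K(u)\overline{q_K(w)}$ and substituting $u=w+v$. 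This gives $\sum_v q_K(v)\sum_w\Omega_K(w,v)$, and nondegeneracy of $\Omega_K$ kills every term except $v=0$. That determines the modulus. The phase then follows from the multiplicativity of $p_+$ under orthogonal sums together with its invariance under stable equivalence: adding unimodular even lattices with their known signatures, and using that every even lattice becomes, after adding unimodular summands, a sum of pieces with computable Gauss sums (Nikulin / Wall).

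\emph{Main obstacle.} The modulus is easy. The real difficulty is pinning down the phase $e^{\pi i\sigma(K)/4}$, which is the content of Milgram's theorem. I would handle it with the Poisson-summation route of Step 3, because it yields the signature directly from the Fresnel integral. The one subtle point is justifying the limit: the Gaussian regularization must converge and interchange correctly with the finite coset sum.
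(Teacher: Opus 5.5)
Your proposal is correct, and its skeleton is the paper's. The paper identifies $(G_K,q_K)$ as the discriminant form of the even lattice $(\Lambda,K)$, quotes Milgram's formula (Milnor--Husemoller, Appendix~4) for $p_+(K)$, and obtains $p_-(K)$ by complex conjugation, which is your Step~1. The difference is that you prove the cited input rather than quote it.

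Your Step~2 is exactly the case $\rho=1$, $A=(1)$ of the paper's own Lemma~\ref{lem:toral-quadratic-reciprocity}, so within the paper this lemma could have been derived without the external citation. However, reciprocity contains Milgram as its simplest instance, so this step relocates the citation rather than making the argument more elementary. Two repairs to the general statement you wrote:
\begin{itemize}
\item Only $K$ needs to be even, since then $A\otimes K$ is even for every integral $A$. As written, you demand $A$ even and then take the odd $A=(1)$.
\item Keep your minus sign in the right-hand exponent; it is the correct classical form. For $A=(3)$, $K=(2)$ the left side is $1-i=e^{\pi i/4}\sqrt{2/3}\sum_{x\in\mathbb Z/3}e^{-2\pi i x^2/3}$, whereas the $+$ version printed in that lemma gives $-1+i$. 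For $A=(1)$ the right-hand sum is the single term $1$, so the sign is harmless for the present lemma.
\end{itemize}

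Step~3 is what makes your proof genuinely self-contained, and the limit you worry about is routine. The Fourier transform of $e^{\pi i x^\top Kx-\epsilon|x|^2}$ is an exact complex Gaussian whose prefactor $\pi^{n/2}\det(\epsilon I-\pi iK)^{-1/2}$ (principal branch) tends to $|\det K|^{-1/2}e^{\frac{\pi i}{4}\sigma(K)}$. After grouping $\xi\in\Lambda^*$ into $K\Lambda$-cosets, the two sides of Poisson summation are asymptotic to $(\pi/\epsilon)^{n/2}$ and $(\pi/\epsilon)^{n/2}|\det K|^{-1/2}e^{\frac{\pi i}{4}\sigma(K)}p_-(K)$ respectively. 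This yields $p_-(K)$ directly, and $p_+(K)$ by conjugation.

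Your modulus computation $|p_+(K)|^2=|G_K|$ is fine. The stable-classification route to the phase, however, needs Nikulin's stable-equivalence theorem plus Wall's normal forms and explicit small-rank checks. It is far heavier than Step~3 and can be dropped.
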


\begin{proof}
The finite quadratic module \((G_K,q_K)\) is the discriminant form of the even
nondegenerate lattice \((\Lambda,K)\). Milgram's formula therefore gives
\[
|G_K|^{-1/2}\sum_{u\in G_K} q_K(u)
=
e^{\frac{\pi i}{4}\sigma(K)};
\]
see, for example, \cite[Appendix 4]{milnor2013}. This is the first identity in
\eqref{eq:toral-pplus-eval}. The second follows by complex conjugation.
\end{proof}

\begin{lemma}\label{lem:toral-RT-null-separation}
Let \(Q_{\mathcal L,K}\) be the quadratic form from the RT surgery formula of Section~\ref{Surgery-formulas}. Then, after separating the \(\nu=b_1(M)\) null directions of \(\mathcal L\),
one has
\begin{equation}\label{eq:toral-RT-null-separated}
\sum_{a\in G_K^m}\exp\!\bigl(2\pi i\,Q_{\mathcal L,K}(a)\bigr)
=
|G_K|^\nu
\sum_{[y]\in G_K^\rho}
\exp\!\Bigl(
\pi i\,y^\top(\mathcal L_{\mathrm{reg}}\otimes K^{-1})y
\Bigr).
\end{equation}
\end{lemma}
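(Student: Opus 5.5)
The plan is to make the integral change of basis \(U\in GL_m(\mathbb Z)\) from \eqref{eq:L-block-form-closed-prelim} act on the summation variable and then split the sum into a regular part and a null part.

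\textbf{Step 1 (change of variables).} The matrix \(U\otimes I_n\) acts on \((\Lambda^*)^m\) and preserves \((K\Lambda)^m\), so it induces an automorphism of \(G_K^m\); we substitute \(a=(U\otimes I_n)a'\). Choosing lifts \(x=(U\otimes I_n)x'\), we get
\[
x^\top(\mathcal L\otimes K^{-1})x=x'^\top(U^\top\mathcal LU\otimes K^{-1})x'.
\]
Lemma~\ref{lem:QLK-well-defined-closed-prelim}, applied to the integral symmetric matrix \(U^\top\mathcal LU\), shows that the right-hand side is a well-defined function of \([x']\) modulo \(2\). Since the substitution is a bijection of \(G_K^m\), we obtain
\[
\sum_{a\in G_K^m}\exp\bigl(2\pi i\,Q_{\mathcal L,K}(a)\bigr)=\sum_{a'\in G_K^m}\exp\bigl(2\pi i\,Q_{U^\top\mathcal LU,K}(a')\bigr).
\]

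\textbf{Step 2 (splitting).} Write \(a'=(y,w)\) with \(y\in G_K^\rho\) and \(w\in G_K^\nu\). The block form gives
\[
Q_{U^\top\mathcal LU,K}(y,w)=\tfrac12\,y^\top(\mathcal L_{\mathrm{reg}}\otimes K^{-1})y,
\]
which is independent of \(w\). The sum therefore factorizes, and the \(w\)-sum contributes exactly \(|G_K|^\nu\). The remaining sum is over \([y]\in G_K^\rho\) of \(\exp\bigl(\pi i\,y^\top(\mathcal L_{\mathrm{reg}}\otimes K^{-1})y\bigr)\), which is \eqref{eq:toral-RT-null-separated}.

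\textbf{Main obstacle.} The only subtle point is well-definedness and bijectivity in Step 1. One must check that \(U\otimes I_n\), which acts componentwise on the \(m\) factors, descends to \(G_K^m\) and is invertible there. This follows because \(U^{-1}\) is also integral, so \(U^{-1}\otimes I_n\) descends as well and provides the inverse. One must also confirm that the integrand in the last sum depends only on the class \([y]\in G_K^\rho\). This is Lemma~\ref{lem:QLK-well-defined-closed-prelim} applied to \(\mathcal L_{\mathrm{reg}}\), whose proof uses the evenness of \(K\). With these checks in place the identity follows directly; no Gauss-sum evaluation is needed.
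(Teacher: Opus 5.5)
Your proposal is correct and follows the same route as the paper. You change variables by the integral basis change \(U\), which permutes \(G_K^m\). You then note that the transformed quadratic form sees only the regular block \(\mathcal L_{\mathrm{reg}}\), so the \(\nu\) null variables contribute the factor \(|G_K|^\nu\). Your added checks make explicit what the paper asserts in one line: that \(U\otimes I_n\) descends to \(G_K^m\) with inverse \(U^{-1}\otimes I_n\), and that the integrand is well defined on classes by Lemma~\ref{lem:QLK-well-defined-closed-prelim}.
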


\begin{proof}
Choose \(U\in GL_m(\mathbb Z)\) such that
\[
U^\top\mathcal L U=
\begin{pmatrix}
\mathcal L_{\mathrm{reg}}&0\\
0&0
\end{pmatrix}.
\]
Because \(U\) induces a bijection of the finite set \(G_K^m\), the RT sum is unchanged
after the corresponding change of variables. In the new coordinates write
\(a=(y,z)\) with \(y\in G_K^\rho\) and \(z\in G_K^\nu\). The quadratic form depends only
on the regular block, so the \(z\)-variables contribute the cardinality factor \(|G_K|^\nu\).
This gives \eqref{eq:toral-RT-null-separated}.
\end{proof}

\begin{lemma}\label{lem:toral-quadratic-reciprocity}
Let \(A\in M_\rho(\mathbb Z)\) be symmetric and nondegenerate, and let
\(K\in M_n(\mathbb Z)\) be symmetric, nondegenerate, and even. Then
\begin{align}
\label{eq:toral-block-reciprocity}
&\sum_{[y]\in \mathbb Z^{\rho n}/(I_\rho\otimes K)\mathbb Z^{\rho n}}
\exp\!\bigl(\pi i\,y^\top(A\otimes K^{-1})y\bigr) \\
&\qquad=
e^{\frac{\pi i}{4}\sigma(A\otimes K)}\,
|G_K|^{\rho/2}\,
|\det A|^{-n/2}
\sum_{[x]\in \mathbb Z^{\rho n}/(A\otimes I_n)\mathbb Z^{\rho n}}
\exp\!\bigl(\pi i\,x^\top(A^{-1}\otimes K)x\bigr).
\nonumber
\end{align}
\end{lemma}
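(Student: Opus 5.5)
This is the reciprocity formula for Gauss sums attached to lattices (Deloup--Turaev). The plan is to deduce it from the general reciprocity theorem for an even pairing and a nondegenerate symmetric form, specialized to the lattice $\mathbb Z^{\rho n}$.

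Set $W=\mathbb Z^{\rho}\otimes\Lambda\cong\mathbb Z^{\rho n}$. Equip $W$ with the integral symmetric form $B:=A\otimes K$; this form is even on $W$ because $K$ is even. Recall the Deloup--Turaev form of reciprocity. For a lattice $W$ with a nondegenerate integral symmetric form $f$, a symmetric form $\beta$ on the dual lattice $W^*$ that is even with respect to $f$, and a characteristic element (here $0$), one has
\[
\sum_{x\in W/fW}\ \text{(Gauss sum of }f^{-1}\text{)}
\]
related to
\[
\sum_{y\in W^*/\beta W^*}\ \text{(Gauss sum of }\beta^{-1}\text{)}
\]
by the factor $e^{\frac{\pi i}{4}\sigma}\,|\det|^{1/2}$. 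I will instead give a direct proof by the standard theta-function (Poisson summation) argument, since that is what the statement encodes.

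\textbf{Step 1 (theta functions).} For $t>0$, consider
\[
\Theta(t)=\sum_{v\in\mathbb Z^{\rho n}}\exp\!\bigl(\pi i\,v^\top(A^{-1}\otimes K)v\bigr)\,\cdots
\]
To make this concrete, I will use the two-variable Gaussian
\[
F(\tau)=\sum_{w\in \mathbb Z^{\rho n}}\exp\!\bigl(\pi i\,w^\top(\tau\,I+\epsilon B')w\bigr),
\]
and compare its behaviour as $\tau\to0$ along two routes. The cleaner route is the following: decompose $\mathbb Z^{\rho n}$ into cosets of $(I_\rho\otimes K)\mathbb Z^{\rho n}$, respectively of $(A\otimes I_n)\mathbb Z^{\rho n}$, and apply Poisson summation to
\[
\sum_{v}\exp\!\bigl(\pi i\,v^\top (A\otimes K^{-1})v\bigr)\,e^{-\pi t|v|^2}.
\]
Grouping $v$ modulo $(I\otimes K)$ shows that, as $t\to0$, this sum is asymptotic to
\[
t^{-\rho n/2}\,|\det K|^{-\rho}\cdot(\text{LHS}),
\]
since the phase depends only on the class of $v$. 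This uses the evenness of $K$ together with Lemma~\ref{lem:QLK-well-defined-closed-prelim} applied to $\mathcal L=A$. Poisson summation converts the same sum into a sum over the dual lattice of the Gaussian with matrix $-(A\otimes K^{-1})^{-1}=-(A^{-1}\otimes K)$, perturbed by the regulator. Grouping that sum modulo $(A\otimes I)$, using Lemma~\ref{lem:toral-quadratic-refinement}, and evaluating the Fresnel factor $\det(-i(A\otimes K^{-1}))^{-1/2}$ produces $e^{\frac{\pi i}{4}\sigma(A\otimes K)}$, the determinant $|\det A|^{n}|\det K|^{-\rho}$, and the class count of $A\otimes I_n$.

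\textbf{Step 2 (bookkeeping).} Match the powers: $|\det(A\otimes K^{-1})|^{-1/2}=|\det A|^{-n/2}|\det K|^{\rho/2}$, and $|G_K|=|\det K|$. Matching the two asymptotics then yields exactly the factor
\[
e^{\frac{\pi i}{4}\sigma(A\otimes K)}\,|G_K|^{\rho/2}\,|\det A|^{-n/2}.
\]
The sign also needs care. The phase in Poisson summation is $\exp(-\pi i\,\xi^\top S^{-1}\xi)$, so the two sides come with opposite signs of the phase. I will fix this either by replacing $A$ with $-A$ in the bookkeeping, or by noting that Poisson summation applied to $e^{+\pi i S}$ gives the dual Gauss sum with $e^{-\pi i S^{-1}}$ and then conjugating. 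The net effect to verify is that the signature factor comes out as $e^{+\frac{\pi i}{4}\sigma}$ with the sign conventions exactly as stated. I expect to check this on $\rho=n=1$, $A=1$, $K=2$: the left side is $1+i$, and the right side is $e^{\pi i/4}\sqrt2\cdot 1$. These agree, which fixes the convention.

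\textbf{Main obstacle.} The hard part is making the $t\to0$ limit rigorous when the phase is only quasi-periodic, and controlling the coset decomposition on the dual side. The key technical point is that $v\mapsto v^\top(A\otimes K^{-1})v/2 \bmod 1$ is periodic modulo $(I\otimes K)\mathbb Z^{\rho n}$; this holds by evenness of $K$ (Lemma~\ref{lem:QLK-well-defined-closed-prelim}). Likewise the dual phase is periodic modulo $(A\otimes I)$ (Lemma~\ref{lem:toral-quadratic-refinement}). Given these two periodicities, the Riemann-sum limits are standard. As an alternative, I would simply cite the Deloup--Turaev reciprocity theorem, or Milgram's formula (Lemma~\ref{lem:toral-pm-evaluation}) applied to the even lattice $(\mathbb Z^\rho\otimes\Lambda,A\otimes K)$ combined with the orthogonal-splitting argument.
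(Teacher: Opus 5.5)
\textbf{The gap is the sign step.} Your Poisson-summation route is a legitimate direct proof of the Deloup--Turaev reciprocity that the paper simply cites (for the commuting forms $A\otimes I_n$ and $I_\rho\otimes K$). Your prefactor bookkeeping is also correct: the Fresnel factor $\det(-i(A\otimes K^{-1}))^{-1/2}$, the covolumes $|\det K|^{\rho}$ and $|\det A|^{n}$, and the powers of $t$ combine to $e^{\frac{\pi i}{4}\sigma(A\otimes K)}|G_K|^{\rho/2}|\det A|^{-n/2}$.

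As you note yourself, Poisson summation turns $e^{\pi i\,v^\top S v}$, with $S=A\otimes K^{-1}$, into $e^{-\pi i\,\xi^\top S^{-1}\xi}$. So what your argument actually establishes is
\[
\sum_{[y]}\exp\bigl(\pi i\,y^\top(A\otimes K^{-1})y\bigr)
=e^{\frac{\pi i}{4}\sigma(A\otimes K)}\,|G_K|^{\rho/2}\,|\det A|^{-n/2}
\sum_{[x]}\exp\bigl(-\pi i\,x^\top(A^{-1}\otimes K)x\bigr).
\]
Neither proposed repair turns this into the stated identity. Replacing $A$ by $-A$ conjugates the left-hand side and flips $\sigma(A\otimes K)$, and conjugating the whole identity does the same. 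Both therefore just reproduce the complex conjugate of the display above.

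No repair is possible, because the lemma with $+\pi i$ on the right is false. Take $\rho=n=1$, $A=3$, $K=2$. The left side is $1+e^{3\pi i/2}=1-i$. The right side is $e^{\pi i/4}\sqrt2\cdot 3^{-1/2}(1+2e^{2\pi i/3})=e^{\pi i/4}\sqrt2\, i=-1+i$. With the minus sign the dual sum is $-i\sqrt3$, and both sides equal $1-i$.

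Your test case $A=1$ cannot detect this. There $\mathbb Z^{\rho n}/(A\otimes I_n)\mathbb Z^{\rho n}$ is trivial, so the dual sum is $1$ for either sign. In general the two versions differ exactly by conjugating the dual sum, which is invisible whenever that sum is real.

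\textbf{What the correct statement is.} Your argument proves the identity with $\exp(-\pi i\,x^\top(A^{-1}\otimes K)x)$. That is also the form of the Deloup--Turaev formula: as in the classical Landsberg--Schaar reciprocity, the dual sum carries the opposite phase. The paper's one-line citation therefore has the same defect, and citing it does not rescue the $+$ sign.

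Your Milgram fallback points the same way when $\gcd(\det A,\det K)=1$. In the discriminant group of $A\otimes K$, the subgroups $(A\otimes I_n)\mathbb Z^{\rho n}$ and $(I_\rho\otimes K)\mathbb Z^{\rho n}$ are orthogonal. They carry exactly the two quadratic forms in the lemma, both with $+\pi i$. Milgram's formula then evaluates the \emph{product} of the two sums as $e^{\frac{\pi i}{4}\sigma(A\otimes K)}|\det A|^{n/2}|\det K|^{\rho/2}$. Dividing by the dual sum, which has modulus $|\det A|^{n/2}$, gives the minus-sign version again. When the determinants are not coprime, this splitting does not exhaust the discriminant group, so the fallback is not a general proof.

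You should state and prove the corrected identity. That sign then has to be carried into Proposition~\ref{prop:toral-RT-reciprocity-form}, where it produces the conjugate-phase Gauss sum. That sum must in turn be checked against the sign convention for the torsion phases $\sigma_{M,p}$ on the Chern--Simons side.
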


\begin{proof}
This is the Deloup--Turaev reciprocity formula written in the form needed here.
Indeed, \cite[Theorem~1]{deloup2005} applied to the commuting symmetric forms
\[
A\otimes I_n
\qquad\text{and}\qquad
I_\rho\otimes K
\]
gives
\[
\begin{aligned}
|\det A|^{-n/2}
\sum_{[x]\in \mathbb Z^{\rho n}/(A\otimes I_n)\mathbb Z^{\rho n}}
\exp\!\bigl(\pi i\,x^\top(A^{-1}\otimes K)x\bigr)
&= \\
e^{-\frac{\pi i}{4}\sigma(A\otimes K)}\,
|G_K|^{-\rho/2}
\sum_{[y]\in \mathbb Z^{\rho n}/(I_\rho\otimes K)\mathbb Z^{\rho n}}
\exp\!\bigl(\pi i\,y^\top(A\otimes K^{-1})y\bigr).
&
\end{aligned}
\]
Rearranging gives \eqref{eq:toral-block-reciprocity}.
\end{proof}

For the equivalence theorem, we write \(Z^{\mathrm{RT,raw}}_{\mathcal C(G_K,q_K)}(M_L)\) for the uncorrected RT surgery scalar of Section~\ref{Surgery-formulas}. The Maslov anomaly correction for the scalar enters only later, in Section \ref{sec:extended-equivalence}.

\begin{prop}\label{prop:toral-RT-reciprocity-form}
Let \(M=M_L\) be presented by surgery on the framed link \(L\) with linking matrix
\(\mathcal L\). Then
\begin{equation}\label{eq:toral-RT-reciprocity-form}
\begin{split}
Z^{\mathrm{RT,raw}}_{\mathcal C(G_K,q_K)}(M_L)
&=
|G_K|^{m_M}\,
|\det(\mathcal L_{\mathrm{reg}})|^{-n/2} \\
&\times
\sum_{[x]\in \mathbb Z^{\rho n}/(\mathcal L_{\mathrm{reg}}\otimes I_n)\mathbb Z^{\rho n}}
\exp\!\Bigl(
\pi i\,x^\top(\mathcal L_{\mathrm{reg}}^{-1}\otimes K)x
\Bigr).
\end{split}
\end{equation}
\end{prop}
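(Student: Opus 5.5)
We start from the raw RT surgery formula of Proposition~\ref{prop:RT-closed-surgery-GK} and rewrite it in three steps: separate the null directions of \(\mathcal L\), apply reciprocity to the regular block, and evaluate the Gauss-sum normalizations explicitly. After that, only bookkeeping of the powers of \(|G_K|\) and of the eighth-root-of-unity phases remains.

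\emph{Step 1: null directions.} By Lemma~\ref{lem:toral-RT-null-separation},
\[
Z^{\mathrm{RT,raw}}_{\mathcal C(G_K,q_K)}(M_L)
=
|G_K|^{-1/2}\,p_+(K)^{\frac{-m-\sigma}{2}}\,p_-(K)^{\frac{-m+\sigma}{2}}\,|G_K|^{\nu}
\sum_{[y]\in G_K^\rho}\exp\!\bigl(\pi i\,y^\top(\mathcal L_{\mathrm{reg}}\otimes K^{-1})y\bigr).
\]
Using the fixed basis \(\Lambda\cong\mathbb Z^n\), we identify \(\Lambda^*\cong\mathbb Z^n\) and \(G_K=\mathbb Z^n/K\mathbb Z^n\). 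Then \(G_K^\rho=\mathbb Z^{\rho n}/(I_\rho\otimes K)\mathbb Z^{\rho n}\), so the remaining sum is exactly the left-hand side of Lemma~\ref{lem:toral-quadratic-reciprocity} with \(A=\mathcal L_{\mathrm{reg}}\). We check that the summand is well defined on this quotient by the same computation as in Lemma~\ref{lem:QLK-well-defined-closed-prelim}; evenness of \(K\) is used there.

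\emph{Step 2: reciprocity.} Lemma~\ref{lem:toral-quadratic-reciprocity} replaces that sum by
\[
e^{\frac{\pi i}{4}\sigma(\mathcal L_{\mathrm{reg}}\otimes K)}\,|G_K|^{\rho/2}\,|\det\mathcal L_{\mathrm{reg}}|^{-n/2}
\sum_{[x]}\exp\!\bigl(\pi i\,x^\top(\mathcal L_{\mathrm{reg}}^{-1}\otimes K)x\bigr).
\]
Signature is multiplicative under tensor products of real symmetric forms (diagonalize both factors). Since \(\sigma(\mathcal L_{\mathrm{reg}})=\sigma(\mathcal L)=\sigma\), we get \(\sigma(\mathcal L_{\mathrm{reg}}\otimes K)=\sigma\,\sigma(K)\).

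\emph{Step 3: the normalization.} By Lemma~\ref{lem:toral-pm-evaluation},
\[
p_+(K)^{\frac{-m-\sigma}{2}}p_-(K)^{\frac{-m+\sigma}{2}}=|G_K|^{-m/2}\,e^{\frac{\pi i}{4}\sigma(K)\left(\frac{-m-\sigma}{2}-\frac{-m+\sigma}{2}\right)}=|G_K|^{-m/2}\,e^{-\frac{\pi i}{4}\sigma\,\sigma(K)}.
\]
This phase cancels the reciprocity phase exactly. The power of \(|G_K|\) is
\[
-\tfrac12-\tfrac m2+\nu+\tfrac\rho2=\tfrac{\nu-1}{2}=m_M,
\]
using \(m=\rho+\nu\) and \eqref{eq:toral-mM}. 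Collecting the factors gives \eqref{eq:toral-RT-reciprocity-form}.

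The main obstacle is Step 3. The exponents \(\tfrac{-m\pm\sigma}{2}\) can be half-integers, since \(\sigma\equiv\rho\) mod 2 and hence \(-m\mp\sigma\equiv\nu\) mod 2. The fractional powers of \(p_\pm(K)\) therefore need a branch convention. I would fix them as in the standard RT normalization by writing \(p_\pm(K)=e^{\pm\frac{\pi i}{4}\sigma(K)}|G_K|^{1/2}\) via Milgram's formula and taking powers of the polar form. This makes the phase computation above an identity rather than an identity up to sign. A secondary check is that the sign conventions in the Deloup--Turaev reciprocity, as quoted in Lemma~\ref{lem:toral-quadratic-reciprocity}, produce \(e^{+\frac{\pi i}{4}\sigma(A\otimes K)}\), so that the cancellation is exact.
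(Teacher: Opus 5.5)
Your route is the same as the paper's: null separation, Lemma~\ref{lem:toral-quadratic-reciprocity} with $A=\mathcal L_{\mathrm{reg}}$, Milgram's formula, and the count $-\tfrac12-\tfrac m2+\nu+\tfrac\rho2=m_M$. Your Steps~1 and~3 are fine, including the polar-form convention for the half-integer powers of $p_\pm(K)$. The genuine gap is Step~2. The reciprocity formula quoted in Lemma~\ref{lem:toral-quadratic-reciprocity} is false, and so is the identity you are asked to prove. Your ``secondary check'' looked at the right lemma but the wrong sign: the prefactor $e^{\frac{\pi i}{4}\sigma(A\otimes K)}$ is correct, but the dual Gauss sum on the right must be \emph{conjugated}.

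Here is a counterexample. Take $n=1$ and $K=[2]$, so $G_K=\mathbb Z/2$, $q_K(1)=i$ and $p_+(K)=1+i$. Let $L$ be the $3$-framed unknot, so $m=\rho=1$, $\nu=0$, $\sigma=1$ and $m_M=-\tfrac12$. The surgery formula of Proposition~\ref{prop:RT-closed-surgery-GK} gives
\[
Z^{\mathrm{RT,raw}}_{\mathcal C(G_K,q_K)}(M_L)=2^{-1/2}\,\frac{1+e^{3\pi i/2}}{1+i}=-\frac{i}{\sqrt2},
\]
while the right-hand side of the statement equals
\[
2^{-1/2}\,3^{-1/2}\sum_{x\bmod 3}e^{2\pi i x^2/3}=2^{-1/2}\,3^{-1/2}\bigl(1+2e^{2\pi i/3}\bigr)=\frac{i}{\sqrt2}.
\]
The lemma fails on the same data: with $A=[3]$ and $K=[2]$, its left side is $1-i$, but its quoted right side is $e^{\pi i/4}\sqrt{2/3}\,\bigl(1+2e^{2\pi i/3}\bigr)=-1+i$.

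The missing ingredient is the correct form of Gauss reciprocity, with the same index sets as in Lemma~\ref{lem:toral-quadratic-reciprocity}:
\[
\sum_{[y]}\exp\bigl(\pi i\,y^\top(A\otimes K^{-1})y\bigr)=e^{\frac{\pi i}{4}\sigma(A\otimes K)}\,|G_K|^{\rho/2}\,|\det A|^{-n/2}\sum_{[x]}\exp\bigl(-\pi i\,x^\top(A^{-1}\otimes K)x\bigr).
\]
You can see where the conjugate comes from in rank one with $\gcd(a,k)=1$. Write $w=ay+kx$ and set $G(a,k)=\sum_{y\bmod k}e^{\pi i a y^2/k}$. Milgram's formula for the even form $[ak]$ then factors as $G(a,k)\,G(k,a)=\sqrt{|ak|}\,e^{\frac{\pi i}{4}\operatorname{sign}(ak)}$. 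Solving for $G(a,k)$ means dividing by $G(k,a)$, whose inverse is $\overline{G(k,a)}/|a|$.

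With the corrected lemma, your Steps~1 and~3 go through unchanged: the phases still cancel and the power of $|G_K|$ is still $m_M$. What they prove, however, is
\[
Z^{\mathrm{RT,raw}}_{\mathcal C(G_K,q_K)}(M_L)=|G_K|^{m_M}\,|\det\mathcal L_{\mathrm{reg}}|^{-n/2}\sum_{[x]}\exp\bigl(-\pi i\,x^\top(\mathcal L_{\mathrm{reg}}^{-1}\otimes K)x\bigr).
\]
In the example this gives $2^{-1/2}3^{-1/2}(1+2e^{-2\pi i/3})=-i/\sqrt2$, as it should. The paper's own proof uses the same lemma, so it has the same defect. The closed comparison with Proposition~\ref{prop:toral-CS-normal-form}, which asserts the $+$ sign on the Chern--Simons side, therefore also has to be re-examined.
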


\begin{proof}
By the surgery formula of Section~\ref{Surgery-formulas},
\[
Z^{\mathrm{RT,raw}}_{\mathcal C(G_K,q_K)}(M_L)
=
|G_K|^{-1/2}\,
p_+(K)^{\frac{-m-\sigma}{2}}\,
p_-(K)^{\frac{-m+\sigma}{2}}
\sum_{a\in G_K^m}\exp\!\bigl(2\pi i\,Q_{\mathcal L,K}(a)\bigr).
\]
By Lemma~\ref{lem:toral-pm-evaluation},
\[
|G_K|^{-1/2}\,
p_+(K)^{\frac{-m-\sigma}{2}}\,
p_-(K)^{\frac{-m+\sigma}{2}}
=
e^{-\frac{\pi i}{4}\sigma(\mathcal L_{\mathrm{reg}}\otimes K)}\,
|G_K|^{-(m+1)/2},
\]
since \(\sigma(\mathcal L)=\sigma(\mathcal L_{\mathrm{reg}})\) and
\(\sigma(\mathcal L_{\mathrm{reg}})\sigma(K)=\sigma(\mathcal L_{\mathrm{reg}}\otimes K)\).
Now apply Lemma~\ref{lem:toral-RT-null-separation} and then
Lemma~\ref{lem:toral-quadratic-reciprocity} with \(A=\mathcal L_{\mathrm{reg}}\). This yields
\[
\begin{aligned}
Z^{\mathrm{RT,raw}}_{\mathcal C(G_K,q_K)}(M_L)
&=
|G_K|^{-(m+1)/2+\nu+\rho/2}\,
|\det(\mathcal L_{\mathrm{reg}})|^{-n/2}\\
&\times\sum_{[x]\in \mathbb Z^{\rho n}/(\mathcal L_{\mathrm{reg}}\otimes I_n)\mathbb Z^{\rho n}}
\exp\!\Bigl(
\pi i\,x^\top(\mathcal L_{\mathrm{reg}}^{-1}\otimes K)x
\Bigr).
\end{aligned}
\]
Finally, $m_M=-(m+1)/2+\nu+\rho/2$, because \(m=\rho+\nu\) and \(m_M=(\nu-1)/2\). This proves
\eqref{eq:toral-RT-reciprocity-form}.
\end{proof}

We may now state the closed equivalence theorem. Recall that  the Abelian Chern--Simons partition function is given by a normalized sum over torsion sectors, while the RT scalar is given by the same finite quadratic Gauss sum.

\begin{theorem}\label{thm:toral-closed-equivalence}
Let \(M=M_L\) be a closed connected oriented \(3\)--manifold presented by integral surgery
on a framed link \(L\subset S^3\) with linking matrix \(\mathcal L\). Let
\(\mathbb T=\mathfrak t/\Lambda\cong U(1)^n\), let
\(K:\Lambda\times\Lambda\to\mathbb Z\) be even, integral, symmetric, and nondegenerate,
and let \((G_K,q_K)\) be the associated finite quadratic module. Then
\begin{equation}\label{eq:toral-closed-comparison}
Z^{\mathrm{RT,raw}}_{\mathcal C(G_K,q_K)}(M_L)=Z^{\mathrm{CS,raw}}_{\mathbb T,K}(M_L).
\end{equation}
Equivalently,
\begin{align}
\label{eq:toral-CS-RT-common-gausssum}
Z^{\mathrm{CS,raw}}_{\mathbb T,K}(M_L)
&=
|G_K|^{m_M}\,
|\det(\mathcal L_{\mathrm{reg}})|^{-n/2} \\
&\qquad\times
\sum_{[x]\in \mathbb Z^{\rho n}/(\mathcal L_{\mathrm{reg}}\otimes I_n)\mathbb Z^{\rho n}}
\exp\!\Bigl(
\pi i\,x^\top(\mathcal L_{\mathrm{reg}}^{-1}\otimes K)x
\Bigr),
\nonumber
\end{align}
\end{theorem}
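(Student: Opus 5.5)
The plan is to compare the two sides through the common normal form already computed on each side, rather than argue from first principles. Proposition~\ref{prop:toral-CS-normal-form} rewrites the closed toral partition function $Z^{\mathrm{CS,raw}}_{\mathbb T,K}(M_L)$ as the finite Gauss sum \eqref{eq:toral-CS-torsion-normal-form}. Proposition~\ref{prop:toral-RT-reciprocity-form} rewrites the raw RT scalar as \eqref{eq:toral-RT-reciprocity-form}. The two right-hand sides are literally the same expression: the same prefactor $|G_K|^{m_M}|\det(\mathcal L_{\mathrm{reg}})|^{-n/2}$ and the same sum over $\mathbb Z^{\rho n}/(\mathcal L_{\mathrm{reg}}\otimes I_n)\mathbb Z^{\rho n}$ of $\exp(\pi i\,x^\top(\mathcal L_{\mathrm{reg}}^{-1}\otimes K)x)$. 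Equating them gives \eqref{eq:toral-closed-comparison}, and the displayed formula \eqref{eq:toral-CS-RT-common-gausssum} is then just the CS side of this identity.

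In order, I would carry out the following steps. First, fix the surgery presentation and the block decomposition $U^\top\mathcal L U$. Both propositions must be evaluated with the same $U$, so that the index sets and the summands match. Second, check that the identification \eqref{eq:toral-torsion-sectors-identification} of $\Tors H^2(M;\Lambda)$ with $\mathbb Z^{\rho n}/(\mathcal L_{\mathrm{reg}}\otimes I_n)\mathbb Z^{\rho n}$ is the one used in Proposition~\ref{prop:toral-CS-normal-form}. Third, note that the summand is well defined on classes by Lemma~\ref{lem:toral-quadratic-refinement}, so the common sum is independent of representatives. Fourth, check that the exponent bookkeeping agrees. On the RT side, Lemma~\ref{lem:toral-pm-evaluation} (Milgram) and the signature identity $\sigma(\mathcal L_{\mathrm{reg}})\sigma(K)=\sigma(\mathcal L_{\mathrm{reg}}\otimes K)$ ensure that the phase $e^{-\frac{\pi i}{4}\sigma(\mathcal L_{\mathrm{reg}}\otimes K)}$ cancels exactly against the reciprocity phase in Lemma~\ref{lem:toral-quadratic-reciprocity}. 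The powers of $|G_K|$ then combine to $m_M=(\nu-1)/2$, which is the exponent on the CS side from Proposition~\ref{thm:toral-boundary-vector}.

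The main obstacle is that the equality is only as good as the consistency of conventions between the two normal forms. Two points need care. One is the sign of the exponent, $\pi i$ versus $-\pi i$ in the quadratic refinement of the torsion linking form, since orientation conventions for the linking pairing and for Deloup--Turaev reciprocity could introduce a complex conjugation. The other is the phase $\sigma_{M,p}=\exp(2\pi i\,q_{\mathcal L,K}(p))$ imported from the rank-one result. To address both, I would first test the identity on lens spaces $L(p,1)$ (one-component surgery with $\mathcal L=(p)$) with $n=1$, $K=(k)$. There both sides are explicit one-variable Gauss sums, and the rank-one theorem of \cite{Galviz1} pins down the conventions. The general case then reduces to this, because both sides factor compatibly under the tensor structure $\mathcal L_{\mathrm{reg}}\otimes K$ after diagonalizing over $\mathbb Q$. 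Finally, I would remark that independence of the surgery presentation is automatic: the RT side is a 3--manifold invariant by Proposition~\ref{prop:RT-closed-surgery-GK}, and the CS side is defined intrinsically.
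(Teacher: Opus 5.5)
\textbf{There is a genuine gap, and you already named it.} Your outline is the paper's own argument: equate the normal forms of Proposition~\ref{prop:toral-CS-normal-form} and Proposition~\ref{prop:toral-RT-reciprocity-form}. But the decisive step, ``the two right-hand sides are literally the same expression, hence equal'', does not hold up. The sign issue you flagged is an actual error, not a convention to be confirmed.

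Lemma~\ref{lem:toral-quadratic-reciprocity} is misstated. The correct reciprocity (Landsberg--Schaar in rank one, the Deloup--Turaev formula in general) carries a complex conjugation on the dual side:
\[
\sum_{[y]}\exp\bigl(\pi i\,y^\top(A\otimes K^{-1})y\bigr)
=e^{\frac{\pi i}{4}\sigma(A\otimes K)}\,|G_K|^{\rho/2}\,|\det A|^{-n/2}
\sum_{[x]}\exp\bigl(-\pi i\,x^\top(A^{-1}\otimes K)x\bigr).
\]
The printed version already fails for $\rho=n=1$, $A=(3)$, $K=(2)$:
\begin{itemize}
\item the left side is $1+e^{3\pi i/2}=1-i$;
\item the printed right side is $e^{\pi i/4}\sqrt2\cdot 3^{-1/2}\cdot i\sqrt3=-1+i$;
\item only the conjugated sum gives $1-i$.
\end{itemize}

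Concretely, take $M$ to be $+3$-surgery on the unknot, with $n=1$ and $K=(2)$. The surgery formula of Proposition~\ref{prop:RT-closed-surgery-GK} gives $Z^{\mathrm{RT,raw}}_{\mathcal C(G_K,q_K)}(M)=2^{-1/2}(1-i)/(1+i)=-i/\sqrt2$. The displayed Gauss sum \eqref{eq:toral-CS-RT-common-gausssum} instead equals $2^{-1/2}3^{-1/2}\sum_{x\bmod 3}e^{2\pi i x^2/3}=+i/\sqrt2$.

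So the RT scalar is the \emph{conjugate} Gauss sum $|G_K|^{m_M}|\det\mathcal L_{\mathrm{reg}}|^{-n/2}\sum_{[x]}\exp\bigl(-\pi i\,x^\top(\mathcal L_{\mathrm{reg}}^{-1}\otimes K)x\bigr)$. The two displayed claims of the statement cannot both hold as written, and citing the two propositions as printed proves nothing.

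\textbf{Your proposed remedy would not repair this.}
\begin{itemize}
\item The lens-space test would expose the discrepancy rather than confirm the identity.
\item The reduction of the general case to $L(p,1)$ ``after diagonalizing over $\mathbb Q$'' is not available. A non-integral change of basis does not preserve $\mathbb Z^{\rho n}/(\mathcal L_{\mathrm{reg}}\otimes I_n)\mathbb Z^{\rho n}$ or $G_K^\rho$, so neither Gauss sum factors.
\item Some linking matrices are not even stably integrally diagonalizable. For example, $\bigl(\begin{smallmatrix}0&2\\2&0\end{smallmatrix}\bigr)$ has a discriminant form on $(\mathbb Z/2)^2$ that vanishes on the diagonal, so it is not a sum of cyclic lens-space pieces.
\end{itemize}

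\textbf{What a proof actually needs.}
\begin{enumerate}
\item[(a)] Rederive the RT normal form with the correct reciprocity sign, as above.
\item[(b)] Compute the torsion phases independently and show $\sum_p\sigma_{M,p}=\sum_{[x]}\exp\bigl(-\pi i\,x^\top(\mathcal L_{\mathrm{reg}}^{-1}\otimes K)x\bigr)$. This is what one expects, since the torsion linking form of $M_L$ in surgery coordinates is $-\mathcal L_{\mathrm{reg}}^{-1}$ in the usual orientation convention. Importing $\sigma_{M,p}=\exp(+2\pi i\,q_{\mathcal L,K}(p))$ from the rank-one paper is not enough.
\end{enumerate}
With both signs fixed, the two normal forms agree and the closed equality follows. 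The displayed formula in the statement must then carry $-\pi i$ instead of $+\pi i$.
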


\begin{proof}
Equation \eqref{eq:toral-CS-RT-common-gausssum} is exactly Proposition~\ref{prop:toral-CS-normal-form}, while Proposition~\ref{prop:toral-RT-reciprocity-form} identifies the RT scalar with the same Gauss sum. Comparing the two formulas gives \eqref{eq:toral-closed-comparison}.
\end{proof}

\subsection{\texorpdfstring{Boundary equivalence for $U(1)^n$}{Boundary equivalence for U(1)n}}
\label{subsec:toral-boundary-equivalence}

We now prove the boundary analogue of the closed equivalence theorem. The essential point is that the bordism operator in each
theory is determined by its matrix elements against canonical handlebody states, and those
matrix elements are obtained by closing the bordism with labeled handlebodies and
evaluating the resulting closed extended \(3\)--manifold; compare Turaev's handlebody
pairing construction \cite[Section~IV.1--IV.3]{Turaev1994}.

It is important to note that the closed equivalence theorem by itself does \emph{not} immediately imply equality of boundary operators. Indeed, for a bordism \( X:(\Sigma_{\mathrm{in}},\lambda_{\mathrm{in}}) \to (\Sigma_{\mathrm{out}},\lambda_{\mathrm{out}}), \) the relevant matrix coefficients are computed by closing \(X\) with canonical incoming and outgoing handlebodies, namely through the closed manifolds \[ M_{v,w}:= H_{\mathrm{out}}(w)\cup_{\Sigma_{\mathrm{out}}}X\cup_{\Sigma_{\mathrm{in}}}H_{\mathrm{in}}(v). \] Thus one must work with the Maslov anomaly correction for the extended theories, not with the raw surgery operators, because bordism operators are defined through these closures and strict functoriality is recovered only in the extended formalism. Let \(\mathcal C(G_K,q_K)\) be the pointed modular category attached to the finite quadratic module \((G_K,q_K)\). Since \(\mathcal C(G_K,q_K)\) is modular, Turaev's extended Reshetikhin--Turaev construction applies.

Let $X:(\Sigma_{\mathrm{in}},\lambda_{\mathrm{in}})
\to
(\Sigma_{\mathrm{out}},\lambda_{\mathrm{out}})$ be an extended bordism, and let
\[
L_{\lambda_{\mathrm{in}}}\subset H^1(\Sigma_{\mathrm{in}};\mathbb R),
\qquad
L_{\lambda_{\mathrm{out}}}\subset H^1(\Sigma_{\mathrm{out}};\mathbb R)
\]
be the corresponding real polarizations on the toral Chern--Simons side, identified from
the Lagrangian data by Lemma \ref{lem:lambdaX-LX}. Denote by
\begin{equation}\label{eq:toral-RT-operator}
 Z^{\mathrm{RT}}_{\mathcal C(G_K,q_K)}(X)
\in
\Hom\!\Bigl(
\mathcal V^{\mathrm{RT}}_{\mathcal C(G_K,q_K)}(\Sigma_{\mathrm{in}},\lambda_{\mathrm{in}}),
\mathcal V^{\mathrm{RT}}_{\mathcal C(G_K,q_K)}(\Sigma_{\mathrm{out}},\lambda_{\mathrm{out}})
\Bigr)
\end{equation}
the Maslov--corrected Reshetikhin--Turaev operator, and by
\begin{equation}\label{eq:toral-CS-operator}
Z^{\mathrm{CS}}_{\mathbb T,K}(X)
\in
\Hom\!\Bigl(
\mathcal H_{\mathbb T,K}(\Sigma_{\mathrm{in}},L_{\lambda_{\mathrm{in}}}),
\mathcal H_{\mathbb T,K}(\Sigma_{\mathrm{out}},L_{\lambda_{\mathrm{out}}})
\Bigr)
\end{equation}
the Maslov--corrected toral Chern--Simons bordism operator. By Theorem~\ref{thm:toral-state-space-identification}, for every connected extended
surface \((\Sigma,\lambda)\) there is a canonical unitary isomorphism
\begin{equation}\label{eq:PhiSigmaK-boundary}
\Phi_{\Sigma,K}:
\mathcal V^{\mathrm{RT}}_{\mathcal C(G_K,q_K)}(\Sigma,\lambda)
\xrightarrow{\ \cong\ }
\mathcal H_{\mathbb T,K}(\Sigma,L_\lambda),
\end{equation}
sending the canonical handlebody basis on the RT side to the canonical
Bohr--Sommerfeld basis on the toral Chern--Simons side. For disconnected surfaces, we
extend \(\Phi_{\Sigma,K}\) by tensor product over connected components.

\begin{prop}\label{prop:closure-weight-signature}
Let $X:(\Sigma_{\mathrm{in}},\lambda_{\mathrm{in}})
\to 
(\Sigma_{\mathrm{out}},\lambda_{\mathrm{out}})$ be an extended bordism, and let $\alpha\in\Pi_{\mathrm{in}},$ $\beta\in\Pi_{\mathrm{out}}$ label the canonical incoming and outgoing handlebody basis states. Form the corresponding
canonical closed extended \(3\)--manifold
\[
M_{\alpha,\beta}
:=
H_{\mathrm{out}}(\beta)\cup_{\Sigma_{\mathrm{out}}}X\cup_{\Sigma_{\mathrm{in}}}H_{\mathrm{in}}(\alpha).
\]
Let \(n(M_{\alpha,\beta})\in \mathbb Z/8\mathbb Z\) be its Walker--Turaev extended  weight, and let \(L_{\alpha,\beta}\) be an integral surgery presentation of the chosen Turaev extended structure on \(M_{\alpha,\beta}\). If \((L_{\alpha,\beta})_{\mathrm{reg}}\) denotes the induced nondegenerate form on $\mathbb Z^m/\ker L_{\alpha,\beta},$ then
\begin{equation}\label{eq:closure-weight-signature}
n(M_{\alpha,\beta})
\equiv
\sigma\!\bigl((L_{\alpha,\beta})_{\mathrm{reg}}\bigr)
\pmod 8.
\end{equation}
\end{prop}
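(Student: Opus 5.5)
The plan is to reduce the claim to the defining property of Walker--Turaev weights for closed extended $3$--manifolds, namely that the weight of a closed extended manifold is the signature of a bounding $4$--manifold. For a closed extended $3$--manifold $M$, Turaev's extended structure (\cite[Section~IV.9]{Turaev1994}) assigns an integer weight. Realizing $M$ as $\partial W$ with $W$ the $4$--dimensional $2$--handlebody obtained from $B^4$ by attaching $2$--handles along a framed link $L$ gives the canonical weight $\sigma(W)$. I will take this as the normalization used when we say that $L_{\alpha,\beta}$ presents ``the chosen Turaev extended structure'' on $M_{\alpha,\beta}$: the surgery presentation is chosen so that the associated trace $W_{L_{\alpha,\beta}}$ realizes the extended weight of the glued manifold, with the weight read modulo $8$ as in the $\mathbb Z/8$ anomaly convention.

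The first step is to compute the extended weight of $M_{\alpha,\beta}$ by Walker's additivity. Gluing the handlebodies $H_{\mathrm{in}}(\alpha)$ and $H_{\mathrm{out}}(\beta)$ to $X$ gives
\[
n(M_{\alpha,\beta})=n(X)+n(H_{\mathrm{in}})+n(H_{\mathrm{out}})-\mu(\lambda_{\mathrm{in}},\cdot,\cdot)-\mu(\lambda_{\mathrm{out}},\cdot,\cdot).
\]
Because the canonical handlebodies are chosen with $\ker(H_1(\Sigma)\to H_1(H))=\lambda$, the Maslov correction terms vanish. The weight of $M_{\alpha,\beta}$ is therefore the one inherited from $X$, and it equals the signature of any $4$--manifold $W$ with $\partial W=M_{\alpha,\beta}$ that is compatible with the chosen Lagrangian data, by Wall's non-additivity formula together with Novikov additivity. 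I then choose $W=W_{L_{\alpha,\beta}}$, so that $n(M_{\alpha,\beta})=\sigma(W_{L_{\alpha,\beta}})$.

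The second step is the linear algebra. The intersection form of $W_L$ on $H_2(W_L)\cong\mathbb Z^m$ is exactly the linking matrix $L_{\alpha,\beta}$, so $\sigma(W_L)=\sigma(L_{\alpha,\beta})$. After the integral change of basis $U$ as in \eqref{eq:L-block-form-closed-prelim}, we have
\[
U^\top L_{\alpha,\beta}U=\mathrm{diag}\bigl((L_{\alpha,\beta})_{\mathrm{reg}},0\bigr).
\]
Signature is a congruence invariant and ignores the null block, so $\sigma(L_{\alpha,\beta})=\sigma((L_{\alpha,\beta})_{\mathrm{reg}})$. This is the same observation $\sigma(\mathcal L)=\sigma(\mathcal L_{\mathrm{reg}})$ already used in the proof of Proposition~\ref{prop:toral-RT-reciprocity-form}. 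Reducing modulo $8$ gives \eqref{eq:closure-weight-signature}.

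The main obstacle is the first step: justifying that the surgery presentation can be chosen to carry precisely the extended weight of $M_{\alpha,\beta}$ rather than some other integer. Any two surgery presentations of the same manifold differ by Kirby moves. Handle slides preserve the signature, but blow-ups change it by $\pm1$. I would handle this by defining $L_{\alpha,\beta}$ as a presentation whose trace has signature equal to $n(M_{\alpha,\beta})$; adding $\pm1$--framed unknots adjusts the signature to any prescribed integer, so such a presentation always exists. With this choice the statement becomes the Walker--Turaev identification of the weight with $\sigma(W_L)$, and only the linear-algebra reduction needs to be checked.
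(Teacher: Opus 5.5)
Your proposal is correct and follows essentially the same route as the paper: both read ``$L_{\alpha,\beta}$ presents the chosen Turaev extended structure'' as Walker's surgery normalization $n(M_{\alpha,\beta})\equiv\sigma(W_{L_{\alpha,\beta}}) \pmod 8$. Both then use that the trace's intersection form is the linking matrix, whose signature equals that of its regular block.

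The additivity step at the start of your argument is unnecessary. Its claim that the weight equals the signature of \emph{any} compatible bounding $4$--manifold is false as stated, as your own blow-up remark shows. Your final paragraph already replaces it with exactly the normalization the paper invokes.
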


\begin{proof}
Let \(W_{\alpha,\beta}\) be the surgery trace \(4\)-manifold obtained from \(B^4\) by attaching \(2\)-handles along a framed link whose linking matrix is \(L_{\alpha,\beta}\). For canonical handlebody closures, we use Walker’s surgery realization of the extended weight, according to which the framing defect is measured modulo \(8\) by the signature of the surgery trace:
\begin{equation}\label{eq:walker-trace-signature}
n(M_{\alpha,\beta})\equiv \sigma(W_{\alpha,\beta}) \pmod 8;
\end{equation}
This is the precise higher-rank analogue of the rank--one signature-defect identity proved in \cite[Theorem~5.6(i)]{Galviz1}, which in turn is based on Walker's extended-manifold formalism \cite{Walker}. Now, the intersection form of the oriented compact \(4\)-manifold \(W_{\alpha,\beta}\) is the symmetric bilinear pairing
\[
Q_{W_{\alpha,\beta}}:H_2(W_{\alpha,\beta};\mathbb Z)\times
H_2(W_{\alpha,\beta};\mathbb Z)\longrightarrow \mathbb Z,
\qquad
Q_{W_{\alpha,\beta}}([S],[S'])=S\cdot S'.
\]
In the natural basis given by the cores of the attached \(2\)-handles, this form is
represented by the surgery linking matrix \(L_{\alpha,\beta}\); see
\cite[Proposition~4.5.11]{gompf1999}. If \(L_{\alpha,\beta}\) is degenerate, then the
radical of \(Q_{W_{\alpha,\beta}}\) is \(\ker L_{\alpha,\beta}\), and the induced nondegenerate form on
\[
H_2(W_{\alpha,\beta};\mathbb Z)/\ker Q_{W_{\alpha,\beta}}
\]
is represented by the regular block \((L_{\alpha,\beta})_{\mathrm{reg}}\). By definition, the signature
of a possibly degenerate symmetric form is the signature of this induced nondegenerate
quotient form. Therefore
\[
\sigma(W_{\alpha,\beta})
=
\sigma\!\bigl((L_{\alpha,\beta})_{\mathrm{reg}}\bigr).
\]
Combining this identity with \eqref{eq:walker-trace-signature} gives
\eqref{eq:closure-weight-signature}.
\end{proof}

\begin{lemma}\label{lem:walker-weight}
Let $X:(\Sigma_{\mathrm{in}},\lambda_{\mathrm{in}})
\longrightarrow
(\Sigma_{\mathrm{out}},\lambda_{\mathrm{out}})$
be an extended bordism, and write \(n_X\in\mathbb Z/8\mathbb Z\) for its
ordinary Walker--Turaev extended weight. Let \(H_{\mathrm{in}}(\alpha)\)
and \(H_{\mathrm{out}}(\beta)\) be the standard incoming and outgoing
handlebodies representing the basis labels \(\alpha\) and \(\beta\), equipped
with their canonical zero weights. Form the closed extended \(3\)--manifold
\[
M_{\alpha,\beta}
=
H_{\mathrm{out}}(\beta)\cup_{\Sigma_{\mathrm{out}}}
X
\cup_{\Sigma_{\mathrm{in}}}
H_{\mathrm{in}}(\alpha),
\]
and let \(n(M_{\alpha,\beta})\in\mathbb Z/8\mathbb Z\) denote its ordinary
Walker--Turaev extended weight.

Let \(L_1,L_2,L_3\) be the Wall triple of Lagrangian subspaces associated with the above gluing, with the same sign convention as in the extended bordism category. Then
\[
n(M_{\alpha,\beta})
\equiv
n_X+\mu_{\Gamma}(L_1,L_2,L_3)
\pmod 8.
\]
Moreover, if \(L_{\alpha,\beta}\) is an integral surgery presentation of
the chosen Turaev extended structure on \(M_{\alpha,\beta}\), and if
\((L_{\alpha,\beta})_{\mathrm{reg}}\) denotes the induced nondegenerate
regular block, then
\[
n(M_{\alpha,\beta})
\equiv
\sigma\!\bigl((L_{\alpha,\beta})_{\mathrm{reg}}\bigr)
\pmod 8.
\]
Consequently,
\[
n_X+\mu_{\Gamma}(L_1,L_2,L_3)
\equiv
\sigma\!\bigl((L_{\alpha,\beta})_{\mathrm{reg}}\bigr)
\pmod 8.
\]
\end{lemma}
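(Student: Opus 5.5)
The lemma has three assertions. The first is the Walker--Turaev gluing rule for weights. The second is the signature identity already proved. The third follows formally from the first two. I will prove them in that order.

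For the first assertion I will use the composition law of the extended bordism category directly. In Walker's and Turaev's formalism, gluing extended $3$--manifolds $X_1\cup_\Sigma X_2$ assigns the weight
\[
n(X_1\cup_\Sigma X_2)=n(X_1)+n(X_2)-\mu_\Sigma(\lambda_1,\lambda_\Sigma,\lambda_2),
\]
with the sign fixed by the convention of the category. Here $\lambda_1,\lambda_2$ are the Lagrangians $\ker(H_1(\Sigma)\to H_1(X_i))$ and $\lambda_\Sigma$ is the Lagrangian carried by the extended surface. I apply this twice. The first gluing attaches $H_{\mathrm{in}}(\alpha)$ along $\Sigma_{\mathrm{in}}$ and the second attaches $H_{\mathrm{out}}(\beta)$ along $\Sigma_{\mathrm{out}}$.

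The handlebodies carry canonical zero weights. They are chosen so that $\ker(H_1(\Sigma)\to H_1(H))=\lambda$, which means the handlebody Lagrangian coincides with the extended-surface Lagrangian. A Maslov index with two equal entries vanishes, so this should make each boundary correction trivial.

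This forces a decision about what the Wall triple $(L_1,L_2,L_3)$ means in the statement. I will interpret $\mu_\Gamma(L_1,L_2,L_3)$ as the total Wall/Maslov correction accumulated at the gluing interface $\Gamma=\Sigma_{\mathrm{in}}\sqcup\Sigma_{\mathrm{out}}$. By additivity of the Maslov index over disjoint unions, this is the sum of the two corrections above.

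The step I expect to be the main obstacle is matching the paper's abstract triple $(L_1,L_2,L_3)$ and its sign with the concrete Walker gluing formula. The paper gives no explicit definition of the triple, so I will fix it as the one produced by the category's composition rule. Concretely, I take $L_1$ to be the image of the $X$-side Lagrangian $\lambda_X$, $L_2$ the boundary Lagrangian data $\lambda_{\mathrm{in}}\oplus\lambda_{\mathrm{out}}$, and $L_3$ the handlebody Lagrangians. With these choices the identity holds by definition of composition, and Wall non-additivity of the signature confirms that the convention is the correct one. Under this reading, $n(M_{\alpha,\beta})\equiv n_X+\mu_\Gamma(L_1,L_2,L_3)\pmod 8$ is precisely the composition law, reduced mod $8$ because weights live in $\mathbb Z/8$.

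The second assertion is verbatim Proposition~\ref{prop:closure-weight-signature}. Its proof identifies $n(M_{\alpha,\beta})$ with the signature of the surgery trace $W_{\alpha,\beta}$ via Walker's realization. It then identifies $\sigma(W_{\alpha,\beta})$ with $\sigma\bigl((L_{\alpha,\beta})_{\mathrm{reg}}\bigr)$, using that the intersection form of the trace is the linking matrix with radical $\ker L_{\alpha,\beta}$. I will simply cite it.

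The consequence follows by equating the two expressions for $n(M_{\alpha,\beta})$ in $\mathbb Z/8\mathbb Z$.
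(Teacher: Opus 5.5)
Your proposal is correct and follows essentially the same route as the paper. The first congruence comes from the weight-additivity (composition) rule of the extended bordism category, with zero-weight handlebodies. The second is Walker's surgery-trace identity $n(M_{\alpha,\beta})\equiv\sigma(W_{\alpha,\beta})=\sigma\bigl((L_{\alpha,\beta})_{\mathrm{reg}}\bigr)$, which the paper re-derives inline and you cite as Proposition~\ref{prop:closure-weight-signature}; the third assertion is their combination.

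Your extra observation sharpens what the paper leaves implicit: the handlebody Lagrangians equal $\lambda_{\mathrm{in}},\lambda_{\mathrm{out}}$, so the Maslov term has a repeated entry and vanishes, giving $n(M_{\alpha,\beta})\equiv n_X$. One small correction: additivity of the Maslov index over $\Sigma_{\mathrm{in}}\sqcup\Sigma_{\mathrm{out}}$ is not literally available, since $\lambda_X$ need not split as a direct sum. The repeated entry kills the whole triple regardless, so the conclusion stands.
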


\begin{proof}
In the extended bordism category, the weight of a glued bordism is obtained by adding the weights of the pieces and the Maslov--Kashiwara defect of the gluing. The two canonical handlebodies have weight zero, while \(X\) has weight \(n_X\). Hence
\[
n(M_{\alpha,\beta})
\equiv
0+n_X+0+\mu_{\Gamma}(L_1,L_2,L_3)
\equiv
n_X+\mu_{\Gamma}(L_1,L_2,L_3)
\pmod 8.
\]
where \(L_1,L_2,L_3\) are the Wall Lagrangians associated with the gluing. This proves the first claim. Equivalently, the same congruence is the boundary form of Wall's non-additivity theorem for the signature \cite{Wall1963}. If \(W_{\alpha,\beta}\) is a four-dimensional trace realizing the chosen extended surgery presentation, then Wall's theorem identifies the Maslov anomaly of the gluing with the signature defect of \(W_{\alpha,\beta}\). With Walker's surgery normalization of extended manifolds, this gives
\[
n(M_{\alpha,\beta})
\equiv
\sigma(W_{\alpha,\beta})
\pmod 8.
\]
Now choose the surgery trace obtained from \(B^4\) by attaching \(2\)-handles along the framed surgery link representing the chosen extended structure. Its intersection form is represented by the linking matrix \(L_{\alpha,\beta}\). If \(L_{\alpha,\beta}\) is degenerate, its signature is by definition the signature of the induced nondegenerate form on the quotient by its radical, equivalently the signature of \((L_{\alpha,\beta})_{\mathrm{reg}}\). Therefore
\[
\sigma(W_{\alpha,\beta})
=
\sigma\!\bigl((L_{\alpha,\beta})_{\mathrm{reg}}\bigr).
\]
Combining these identities proves the result.
\end{proof}

To compare the extended RT scalar with the toral Chern--Simons scalar, we need the anomaly constant that governs the Maslov anomaly correction in the theory.
\begin{lemma}\label{lem:kappa-K}
The scalar
\begin{equation}\label{eq:kappa-K-definition}
\kappa(K):=|G_K|^{-1/2}p_-(K),
\qquad
p_-(K)=\sum_{u\in G_K} q_K(u)^{-1},
\end{equation}
is the anomaly constant of the pointed modular category \(\mathcal C(G_K,q_K)\), i.e.\ the scalar that enters the Maslov anomaly correction of the extended Reshetikhin--Turaev theory. In the present case, it is given explicitly by
\begin{equation}\label{eq:kappa-K-evaluation}
\kappa(K)=e^{-\frac{\pi i}{4}\sigma(K)}.
\end{equation}
\end{lemma}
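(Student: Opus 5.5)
The statement has two parts: an identification of $\kappa(K)$ as the anomaly constant of $\mathcal C(G_K,q_K)$, and its explicit value. I would handle them in that order.

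For the identification, I would start from Turaev's general definition of the anomaly data of a modular category. There one has the rank $\mathscr D$ with $\mathscr D^2=\sum_i \dim(i)^2$, and the Gauss sums $\Delta_\pm=\sum_i \theta_i^{\pm1}\dim(i)^2$. The scalar entering the Maslov correction is $\Delta_-/\mathscr D$ (equivalently $\mathscr D/\Delta_+$, or its inverse, depending on convention); see \cite[Chapter~II and Section~IV.9]{Turaev1994}. Proposition~\ref{prop:pointed-category-general} tells us that in $\mathcal C(G_K,q_K)$ every simple object $R_u$ has $\dim(R_u)=1$ and twist $\theta_u=q_K(u)$. Hence $\mathscr D^2=|G_K|$ and $\Delta_\pm=p_\pm(K)$. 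For $\mathscr D$ I would take the positive square root $|G_K|^{1/2}$, which is the normalization already used in the surgery formula of Proposition~\ref{prop:RT-closed-surgery-GK}. This gives $\Delta_-/\mathscr D=|G_K|^{-1/2}p_-(K)=\kappa(K)$.

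As a consistency check, I would verify that this is the same factor that appears in the first Kirby move in Proposition~\ref{prop:RT-closed-surgery-GK}. A change of framing or signature by one unit changes the normalization by exactly $p_\pm(K)|G_K|^{-1/2}$. This is precisely the ratio that the extended weight has to absorb, and it confirms that $\kappa(K)$ is the anomaly scalar.

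For the explicit value I would apply Lemma~\ref{lem:toral-pm-evaluation}, that is, Milgram's formula for the discriminant form of the even nondegenerate lattice $(\Lambda,K)$. It gives $p_-(K)=e^{-\frac{\pi i}{4}\sigma(K)}|G_K|^{1/2}$. Substituting this into the definition of $\kappa(K)$ yields \eqref{eq:kappa-K-evaluation}. One could also note that $\kappa(K)=\overline{p_+(K)}/|G_K|^{1/2}$, because $q_K$ takes values in $U(1)$.

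The main obstacle is fixing conventions rather than computing. Turaev's anomaly may be stated as $\Delta_-\mathscr D^{-1}$ or as its inverse, and the sign of the central charge has to agree with the $K$-twisted factor $e^{\frac{\pi i}{4}\sigma(K)n}$ in Theorem~\ref{thm:toral-CS-TQFT}. I would settle this by tracking the $S^3$ normalization: the one-component surgery on $\pm1$-framed unknots must return $S^3$ with the weight shift dictated by Lemma~\ref{lem:walker-weight}. That pins down $\kappa(K)$, rather than its inverse, as the factor multiplying each unit of weight.
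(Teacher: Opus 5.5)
Your argument is essentially the paper's: the anomaly constant of $\mathcal C(G_K,q_K)$ is the normalized negative Gauss sum $\Delta_-/\mathcal D$, which equals $|G_K|^{-1/2}p_-(K)$ for a pointed category with $\dim R_u=1$ and $\theta_u=q_K(u)$, and Milgram's formula (Lemma~\ref{lem:toral-pm-evaluation}) then gives $e^{-\frac{\pi i}{4}\sigma(K)}$. One caution about your closing convention remark, which the lemma itself does not need: in the paper the corrected scalar is $\kappa(K)^{-n}$ times the raw one (see \eqref{eq:toral-corrected-vs-raw-closure}), so each unit of weight contributes $\kappa(K)^{-1}=e^{\frac{\pi i}{4}\sigma(K)}$; this is what matches the Chern--Simons factor $e^{\frac{\pi i}{4}\sigma(K)n}$, and it is also what your $S^3$ test produces (the $+1$-framed unknot, of surgery weight $1$, has unnormalized value $|G_K|^{-1}p_+(K)=\kappa(K)^{-1}|G_K|^{-1/2}=\kappa(K)^{-1}Z^{\mathrm{RT,raw}}_{\mathcal C(G_K,q_K)}(S^3)$), so your final assertion that $\kappa(K)$ rather than its inverse multiplies each unit of weight has the direction reversed.
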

\begin{proof}
For a pointed modular category, the anomaly constant is the normalized negative
Gauss sum; in the present notation this is exactly the quantity
\(\kappa(K)=|G_K|^{-1/2}p_-(K)\). By Lemma~\ref{lem:toral-pm-evaluation},
\[
p_-(K)=e^{-\frac{\pi i}{4}\sigma(K)}\,|G_K|^{1/2}.
\]
Multiplying by \(|G_K|^{-1/2}\) gives \eqref{eq:kappa-K-evaluation}.
\end{proof}

\begin{corollary}\label{cor:toral-correction-cancels-signature}
With the notation above,
\[
\kappa(K)^{-\,n(M_{\alpha,\beta})}\,
e^{-\frac{\pi i}{4}\sigma\!\left((L_{\alpha,\beta})_{\mathrm{reg}}\otimes K\right)}
=1.
\]
\end{corollary}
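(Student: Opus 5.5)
The plan is to substitute the explicit value of the anomaly constant and reduce the claim to a congruence of exponents modulo $8$, using the signature identity for the closure weight.

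\emph{Step 1: rewrite the first factor.} By Lemma~\ref{lem:kappa-K}, $\kappa(K)=e^{-\frac{\pi i}{4}\sigma(K)}$. Hence
\[
\kappa(K)^{-n(M_{\alpha,\beta})}=e^{\frac{\pi i}{4}\sigma(K)\,n(M_{\alpha,\beta})}.
\]
This depends on $n(M_{\alpha,\beta})$ only through its class modulo $8$. Indeed, changing $n$ by $8$ multiplies the expression by $e^{2\pi i\sigma(K)}=1$, because $\sigma(K)\in\mathbb Z$. So the expression is well defined for $n(M_{\alpha,\beta})\in\mathbb Z/8\mathbb Z$.

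\emph{Step 2: rewrite the second factor.} Next use multiplicativity of signatures under tensor products of nondegenerate real symmetric forms:
\[
\sigma\bigl((L_{\alpha,\beta})_{\mathrm{reg}}\otimes K\bigr)=\sigma\bigl((L_{\alpha,\beta})_{\mathrm{reg}}\bigr)\,\sigma(K).
\]
This holds because, after diagonalizing both factors over $\mathbb R$, the eigenvalue signs of the tensor product are the products of the signs of the factors. The same identity was already used in the proof of Proposition~\ref{prop:toral-RT-reciprocity-form}.

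\emph{Step 3: combine.} The left-hand side of the corollary becomes
\[
\exp\!\Bigl(\tfrac{\pi i}{4}\,\sigma(K)\bigl(n(M_{\alpha,\beta})-\sigma((L_{\alpha,\beta})_{\mathrm{reg}})\bigr)\Bigr).
\]
By Proposition~\ref{prop:closure-weight-signature}, or equivalently the last congruence of Lemma~\ref{lem:walker-weight}, we have $n(M_{\alpha,\beta})\equiv\sigma((L_{\alpha,\beta})_{\mathrm{reg}})\pmod 8$. So the difference in the exponent is $8j$ for some integer $j$. The exponent is then $2\pi i\,\sigma(K)\,j\in 2\pi i\mathbb Z$, and the product equals $1$.

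\emph{Main obstacle.} The only delicate point is bookkeeping: the lifts of the weight, and the signature used, must match the conventions of Proposition~\ref{prop:closure-weight-signature}. Concretely, this means taking the signature of the regular block rather than of the possibly degenerate $L_{\alpha,\beta}$, and reading the weight as a class modulo $8$. Since everything is controlled modulo $8$ and multiplied by the integer $\sigma(K)$, no further input is needed once that proposition is granted.
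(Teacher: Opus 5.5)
Your proposal is correct and follows the paper's proof: both rewrite $\kappa(K)^{-n(M_{\alpha,\beta})}=e^{\frac{\pi i}{4}\sigma(K)\,n(M_{\alpha,\beta})}$ using Lemma~\ref{lem:kappa-K}, invoke the congruence $n(M_{\alpha,\beta})\equiv\sigma((L_{\alpha,\beta})_{\mathrm{reg}})\pmod 8$ from Proposition~\ref{prop:closure-weight-signature}, and use multiplicativity of the signature under tensor products. Your extra remarks, that the expression is well defined on classes modulo $8$ and that the exponent lies in $2\pi i\mathbb Z$, make explicit what the paper leaves implicit in its final substitution.
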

\begin{proof}
By Lemma~\ref{lem:kappa-K},
\[
\kappa(K)^{-\,n(M_{\alpha,\beta})}
=
e^{\frac{\pi i}{4}\sigma(K)\,n(M_{\alpha,\beta})}.
\]
By Proposition~\ref{prop:closure-weight-signature},
\[
n(M_{\alpha,\beta})
\equiv
\sigma\!\bigl((L_{\alpha,\beta})_{\mathrm{reg}}\bigr)
\pmod 8.
\]
Finally,
\[
\sigma\!\left((L_{\alpha,\beta})_{\mathrm{reg}}\otimes K\right)
=
\sigma\!\bigl((L_{\alpha,\beta})_{\mathrm{reg}}\bigr)\sigma(K).
\]
Substituting these identities gives the result.
\end{proof}

\begin{theorem}\label{thm:toral-boundary-equivalence}
Let $X:(\Sigma_{\mathrm{in}},\lambda_{\mathrm{in}})
\to
(\Sigma_{\mathrm{out}},\lambda_{\mathrm{out}})$ be an extended bordism. Under the canonical state-space identifications
\eqref{eq:PhiSigmaK-boundary}, the Reshetikhin--Turaev bordism
operator and the toral Chern--Simons bordism operator coincide:
\begin{equation}\label{eq:toral-boundary-relation}
\Phi_{\Sigma_{\mathrm{out}},K}\circ
 Z^{\mathrm{RT}}_{\mathcal C(G_K,q_K)}(X)
=
Z^{\mathrm{CS}}_{\mathbb T,K}(X)\circ
\Phi_{\Sigma_{\mathrm{in}},K}.
\end{equation}
Equivalently, for every
\[\label{eq:toral-boundary-matrix}
v\in
\mathcal V^{\mathrm{RT}}_{\mathcal C(G_K,q_K)}(\Sigma_{\mathrm{in}},\lambda_{\mathrm{in}}),
\qquad
w\in
\mathcal V^{\mathrm{RT}}_{\mathcal C(G_K,q_K)}(\Sigma_{\mathrm{out}},\lambda_{\mathrm{out}})^*,
\]
one has
\begin{equation}\label{eq:toral-boundary-matrix-elements}
\big\langle w,\,
 Z^{\mathrm{RT}}_{\mathcal C(G_K,q_K)}(X)(v)
\big\rangle
=
\big\langle \Phi_{\Sigma_{\mathrm{out}},K}^{-*}(w),\,
Z^{\mathrm{CS}}_{\mathbb T,K}(X)\bigl(\Phi_{\Sigma_{\mathrm{in}},K}(v)\bigr)
\big\rangle.
\end{equation}
\end{theorem}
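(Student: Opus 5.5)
The plan is to reduce the operator identity \eqref{eq:toral-boundary-relation} to an equality of matrix coefficients against the preferred bases. Both sides are linear maps between finite-dimensional spaces. The identification $\Phi_{\Sigma,K}$ of Theorem~\ref{thm:toral-state-space-identification} sends the RT handlebody basis $\{e_{\mathbf a}\}$ to the Bohr--Sommerfeld basis, and it is extended to disconnected surfaces by tensor products. It therefore suffices to prove \eqref{eq:toral-boundary-matrix-elements} for $v=e^{\mathrm{in}}_{\alpha}$ and $w=(e^{\mathrm{out}}_\beta)^\vee$ with $\alpha\in G_K^{g_{\mathrm{in}}}$ and $\beta\in G_K^{g_{\mathrm{out}}}$. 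On the toral side, $\Phi^{-*}(w)$ is then the covector given by the outgoing Bohr--Sommerfeld leaf state labelled by $\beta$, paired through the torus pairing of Proposition~\ref{prop:torus-pairing-toral} transported by $\Phi$.

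\textbf{RT side.} By Proposition~\ref{prop:bordism-matrix-coefficients-toral}, the RT coefficient equals the extended RT scalar of the closed extended manifold $M_{\alpha,\beta}$. This scalar is $\kappa(K)^{-n(M_{\alpha,\beta})}$ times the raw surgery scalar of Corollary~\ref{cor:bordism-surgery-coefficients-toral}. Here the colored cores are absorbed as fixed-color components of the surgery presentation, so the Gauss sum runs over the remaining labels with the colors inserted as linear terms.

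\textbf{CS side.} Each Bohr--Sommerfeld basis vector is the boundary vector of a handlebody carrying a Wilson-line, or holonomy, translate. Gluing such vectors into $X$ by the gluing law Theorem~\ref{thm:toral-gluing} expresses the CS coefficient as the corrected closed toral scalar of $M_{\alpha,\beta}$, with the labels realized as shifts of the torsion sectors and phases. The cylinder normalization of Proposition~\ref{prop:toral-cylinder}, together with the exponent matching of Proposition~\ref{prop:normalization-exponent}, ensures that no stray powers of $|\det K|$ appear. By Theorem~\ref{thm:toral-CS-TQFT} with the weight convention, the corrected scalar is $e^{\frac{\pi i}{4}\sigma(K)n(M_{\alpha,\beta})}Z^{\mathrm{CS,raw}}_{\mathbb T,K}(M_{\alpha,\beta})$. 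Here $n(M_{\alpha,\beta})=n_X+\mu$ by Lemma~\ref{lem:walker-weight}, where $\mu$ is the Wall--Maslov defect. The BKS anomaly \eqref{eq:BKS-composition} contributes exactly $e^{\frac{\pi i}{4}\sigma(K)\mu}$, since $\mu_K=\sigma(K)\mu_\Sigma$.

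\textbf{Comparison.} The weight prefactors agree on the two sides because $\kappa(K)^{-n}=e^{\frac{\pi i}{4}\sigma(K)n}$ by Lemma~\ref{lem:kappa-K}. It then remains to compare the raw scalars of $M_{\alpha,\beta}$ with the labels inserted. For this I would repeat the argument of Theorem~\ref{thm:toral-closed-equivalence} in the labelled form:
\begin{enumerate}[label=(\roman*)]
\item separate the null directions as in Lemma~\ref{lem:toral-RT-null-separation};
\item apply the reciprocity of Lemma~\ref{lem:toral-quadratic-reciprocity} in its inhomogeneous Deloup--Turaev version, which includes a linear term;
\item match the result with the sector sum of Proposition~\ref{prop:toral-CS-normal-form}, where the label shifts the quadratic refinement $q_{\mathcal L,K}$ by a linking-form term.
\end{enumerate}
Corollary~\ref{cor:toral-correction-cancels-signature} absorbs the leftover signature phase $e^{-\frac{\pi i}{4}\sigma((L_{\alpha,\beta})_{\mathrm{reg}}\otimes K)}$ produced by reciprocity.

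\textbf{Main obstacle.} I expect the hardest step to be the labelled reciprocity together with the identification of the labelled toral sectors. Concretely, one must check that the holonomy labels on the CS side produce exactly the linear term that the colored cores produce on the RT side, with consistent signs, when both are put in Deloup--Turaev form. I would first verify this on the solid torus and the Hopf-link pairing, where it is the Fourier identity of Proposition~\ref{prop:torus-pairing-toral}, and then reduce the general case to this one via the handlebody decomposition into genus-one pieces.
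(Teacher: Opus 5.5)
\textbf{Gap 1: the labelled comparison is missing.} Your skeleton is the paper's. You reduce to the handlebody bases and identify each matrix coefficient, on both sides, with the corrected scalar of the closure $M_{\alpha,\beta}$. Both corrections are the same factor $\kappa(K)^{-n(M_{\alpha,\beta})}=e^{\frac{\pi i}{4}\sigma(K)n(M_{\alpha,\beta})}$, and the raw scalars are then compared by reciprocity.

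The paper finishes by applying Theorem~\ref{thm:toral-closed-equivalence} to the underlying manifold $|M_{\alpha,\beta}|$. You are right that this cannot see the labels: the closures differ only in the colours on the cores, so a labelled comparison is what is actually needed. But you do not supply it, and this step carries the entire $(\alpha,\beta)$-dependence.
\begin{itemize}
\item Section~\ref{sec:toral-CS} has no Wilson lines or labelled handlebody states. A bare handlebody $H$ has $\Tors H^2(H;\Lambda)=0$, so $Z^{\mathrm{CS}}_{\mathbb T,K}(H)$ is supported on the single Bohr--Sommerfeld leaf through the origin.
\item You assert, but do not prove, three things: that $\xi_\alpha$ is the vector of a handlebody with a holonomy translate; that such vectors glue by Theorem~\ref{thm:toral-gluing}; and that the glued scalar is the sector sum with $q_{\mathcal L,K}$ shifted by a linking term.
\item The labelled formula also needs a selection rule. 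The null variables of Lemma~\ref{lem:toral-RT-null-separation} now give a character sum equal to $|G_K|^\nu$ or $0$. The holonomy character must integrate over $\mathcal M_M(\mathbb T)_0$ to the same factor.
\item Checking the Hopf-link Fourier identity only treats the cylinder over $T^2$. It says nothing about closures with nontrivial torsion linking form, and the labelled Gauss sum of a general closure does not factor into genus-one contributions.
\end{itemize}
What is needed is the inhomogeneous reciprocity applied directly to $\mathcal L_{\mathrm{reg}}\otimes K^{-1}$, with linear term given by the linking numbers of the coloured cores with the surgery components. You also need a proof of the matching labelled sector formula on the Chern--Simons side.

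\textbf{Gap 2: the phase bookkeeping uses the weight factor twice.} Once $\kappa(K)^{-n}$ on the RT side has been matched with $e^{\frac{\pi i}{4}\sigma(K)n}$ on the CS side, that factor is used up. Corollary~\ref{cor:toral-correction-cancels-signature} cannot also absorb a phase left over from reciprocity. If $e^{-\frac{\pi i}{4}\sigma((L_{\alpha,\beta})_{\mathrm{reg}}\otimes K)}$ really survived your steps (i)--(iii), the RT coefficient would equal $\kappa(K)^{n(M_{\alpha,\beta})}$ times the CS one, and this is not $1$ in general.

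In fact nothing survives. The phase $e^{\frac{\pi i}{4}\sigma(\mathcal L_{\mathrm{reg}}\otimes K)}$ from Lemma~\ref{lem:toral-quadratic-reciprocity} cancels against the phase $e^{-\frac{\pi i}{4}\sigma(\mathcal L_{\mathrm{reg}}\otimes K)}$ of the normalization $p_+(K)^{\frac{-m-\sigma}{2}}p_-(K)^{\frac{-m+\sigma}{2}}$. That normalization phase is computed by Lemma~\ref{lem:toral-pm-evaluation}, and the cancellation is exactly as in Proposition~\ref{prop:toral-RT-reciprocity-form}. The paper's proof does not use that corollary, and you should drop it from yours.
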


\begin{proof}
We first prove \eqref{eq:toral-boundary-matrix-elements} on the canonical handlebody basis vectors; the general case then follows by linearity.

Let $\Pi_{\mathrm{in}}$ and $\Pi_{\mathrm{out}}$ be the canonical label sets for the incoming and outgoing surfaces. By construction,
these are products of copies of \(G_K\), one copy for each handle in each connected
component. Let
\[
\{e_\alpha\}_{\alpha\in\Pi_{\mathrm{in}}}
\subset
\mathcal V^{\mathrm{RT}}_{\mathcal C(G_K,q_K)}(\Sigma_{\mathrm{in}},\lambda_{\mathrm{in}})
\]
be the canonical RT basis determined by the fixed incoming handlebody, and let
\[
\{e_\beta^\vee\}_{\beta\in\Pi_{\mathrm{out}}}
\subset
\mathcal V^{\mathrm{RT}}_{\mathcal C(G_K,q_K)}(\Sigma_{\mathrm{out}},\lambda_{\mathrm{out}})^*
\]
be the corresponding canonical dual basis determined by the outgoing handlebody.
By Theorem~\ref{thm:toral-state-space-identification}, the map \(\Phi_{\Sigma,K}\)
identifies these with the canonical Bohr--Sommerfeld basis and its dual on the
Chern--Simons side. Thus
\[
\xi_\alpha:=\Phi_{\Sigma_{\mathrm{in}},K}(e_\alpha),
\qquad
\eta_\beta:=\Phi_{\Sigma_{\mathrm{out}},K}^{-*}(e_\beta^\vee)
\]
are the corresponding canonical toral boundary states.

Fix \(\alpha\in\Pi_{\mathrm{in}}\) and \(\beta\in\Pi_{\mathrm{out}}\). Let $H_{\mathrm{in}}(\alpha)$ and $H_{\mathrm{out}}(\beta)$ be the standard labeled incoming and outgoing handlebodies representing
\(e_\alpha\) and \(e_\beta^\vee\). Gluing them to \(X\) produces a closed extended
\(3\)--manifold
\[
M_{\alpha,\beta}
:=
H_{\mathrm{out}}(\beta)\cup_{\Sigma_{\mathrm{out}}}X\cup_{\Sigma_{\mathrm{in}}}H_{\mathrm{in}}(\alpha).
\]

By Turaev's handlebody-pairing construction of bordism operators,
\begin{equation}\label{eq:RT-pairing-rule-toral}
\big\langle e_\beta^\vee,\,
 Z^{\mathrm{RT}}_{\mathcal C(G_K,q_K)}(X)(e_\alpha)
\big\rangle
=
 Z^{\mathrm{RT}}_{\mathcal C(G_K,q_K)}(M_{\alpha,\beta}).
\end{equation}
Because \(\Phi_{\Sigma,K}\) was defined by matching the canonical RT handlebody basis
with the canonical Chern--Simons Bohr--Sommerfeld basis carrying the same labels, the
incoming toral state \(\xi_\alpha\) and the outgoing toral covector \(\eta_\beta\)
are represented by the same labeled handlebodies \(H_{\mathrm{in}}(\alpha)\) and
\(H_{\mathrm{out}}(\beta)\). Moreover, Lemma \ref{lem:lambdaX-LX} identifies the
Lagrangian data \(\lambda_{\mathrm{in}},\lambda_{\mathrm{out}}\) with the corresponding
real polarizations \(L_{\lambda_{\mathrm{in}}},L_{\lambda_{\mathrm{out}}}\), so the same
gluing data are used on both sides. Hence the closure used in the toral
Chern--Simons pairing is literally the same closed extended manifold \(M_{\alpha,\beta}\).

Therefore the gluing axiom for the toral Chern--Simons theory gives
\begin{equation}\label{eq:CS-pairing-rule-toral}
\big\langle \eta_\beta,\,
Z^{\mathrm{CS}}_{\mathbb T,K}(X)(\xi_\alpha)
\big\rangle
=
Z^{\mathrm{CS}}_{\mathbb T,K}(M_{\alpha,\beta}).
\end{equation}
By definition of the Maslov anomaly correction for the RT scalar,
\begin{equation}\label{eq:toral-corrected-vs-raw-closure}
 Z^{\mathrm{RT}}_{\mathcal C(G_K,q_K)}(M_{\alpha,\beta})
=
\kappa(K)^{-\,n(M_{\alpha,\beta})}\,
Z^{\mathrm{RT,raw}}_{\mathcal C(G_K,q_K)}(\mid M_{\alpha,\beta}\mid),
\end{equation}
where \(n(M_{\alpha,\beta})\) is the extended weight of the canonical closure and
\(\kappa(K)\) is the anomaly constant of the pointed modular category
\(\mathcal C(G_K,q_K)\), see Lemma \ref{lem:kappa-K}; compare \cite[Section~IV.4–IV.9]{Turaev1994}.

On the Chern--Simons side the same extended weight appears, see Theorem~\ref{thm:toral-CS-TQFT} and Lemma \ref{lem:walker-weight}. Thus, for the closed extended manifold \(M_{\alpha,\beta}\), one has \begin{equation}\label{eq:toral-cs-corrected-vs-raw-closure} Z^{\mathrm{CS}}_{\mathbb T,K}(M_{\alpha,\beta}) = \kappa(K)^{-\,n(M_{\alpha,\beta})} Z^{\mathrm{CS,raw}}_{\mathbb T,K}(\mid M_{\alpha,\beta}\mid). \end{equation}

By the raw closed equivalence theorem of
Section~\ref{subsec:toral-closed-comparison}, applied to the underlying
closed \(3\)--manifold \(\mid M_{\alpha,\beta}\mid\), we have
\[
Z^{\mathrm{RT,raw}}_{\mathcal C(G_K,q_K)}(\mid M_{\alpha,\beta}\mid)
=
Z^{\mathrm{CS,raw}}_{\mathbb T,K}(\mid M_{\alpha,\beta}\mid).
\]
Combining this raw equality with
\eqref{eq:toral-corrected-vs-raw-closure} and
\eqref{eq:toral-cs-corrected-vs-raw-closure} gives
\begin{equation}\label{eq:toral-corrected-closure-equality}
 Z^{\mathrm{RT}}_{\mathcal C(G_K,q_K)}(M_{\alpha,\beta})
=
Z^{\mathrm{CS}}_{\mathbb T,K}(M_{\alpha,\beta}).
\end{equation}
Therefore, combining \eqref{eq:RT-pairing-rule-toral},
\eqref{eq:CS-pairing-rule-toral}, and
\eqref{eq:toral-corrected-closure-equality} yields
\[
\big\langle e_\beta^\vee,\,
 Z^{\mathrm{RT}}_{\mathcal C(G_K,q_K)}(X)(e_\alpha)
\big\rangle
=
\big\langle \Phi_{\Sigma_{\mathrm{out}},K}^{-*}(e_\beta^\vee),\,
Z^{\mathrm{CS}}_{\mathbb T,K}(X)\bigl(\Phi_{\Sigma_{\mathrm{in}},K}(e_\alpha)\bigr)
\big\rangle
\]
for all \(\alpha\in\Pi_{\mathrm{in}}\) and \(\beta\in\Pi_{\mathrm{out}}\).

Now let $v=\sum_{\alpha\in\Pi_{\mathrm{in}}}c_\alpha e_\alpha,$ and $w=\sum_{\beta\in\Pi_{\mathrm{out}}}d_\beta e_\beta^\vee,$ with only finitely many nonzero coefficients. Since both sides of
\eqref{eq:toral-boundary-matrix-elements} are bilinear in \((v,w)\), the basis
case implies \eqref{eq:toral-boundary-matrix-elements} for arbitrary \(v\) and
\(w\). Finally define
\[
A:=
\Phi_{\Sigma_{\mathrm{out}},K}\circ
 Z^{\mathrm{RT}}_{\mathcal C(G_K,q_K)}(X),
\qquad
B:=
Z^{\mathrm{CS}}_{\mathbb T,K}(X)\circ
\Phi_{\Sigma_{\mathrm{in}},K}.
\]
Then \eqref{eq:toral-boundary-matrix-elements} says that
\[
\big\langle w,\Phi_{\Sigma_{\mathrm{out}},K}^{-1}A(v)\big\rangle
=
\big\langle w,\Phi_{\Sigma_{\mathrm{out}},K}^{-1}B(v)\big\rangle
\qquad
\text{for all }v,\;w.
\]
Since the dual pairing on
\[
\mathcal V^{\mathrm{RT}}_{\mathcal C(G_K,q_K)}(\Sigma_{\mathrm{out}},\lambda_{\mathrm{out}})^*
\times
\mathcal V^{\mathrm{RT}}_{\mathcal C(G_K,q_K)}(\Sigma_{\mathrm{out}},\lambda_{\mathrm{out}})
\]
is nondegenerate, it follows that
\[
\Phi_{\Sigma_{\mathrm{out}},K}^{-1}A(v)
=
\Phi_{\Sigma_{\mathrm{out}},K}^{-1}B(v)
\qquad
\text{for all }v.
\]
Applying \(\Phi_{\Sigma_{\mathrm{out}},K}\) gives \(A=B\), which is precisely
\eqref{eq:toral-boundary-relation}.
\end{proof}

\begin{remark}\label{rem:boundary-corrected-not-raw}
Theorem~\ref{thm:toral-boundary-equivalence} must be stated for the \emph{corrected} operator \( Z^{\mathrm{RT}}_{\mathcal C(G_K,q_K)}\), not for the raw surgery operator.  Rather, the boundary statement concerns bordism operators defined through canonical handlebody closures, and strict functoriality in this setting requires the Maslov anomaly correction for the extended theory. For these canonical closures, the RT scalar and the toral Chern--Simons scalar carry the same extended normalization, so the corresponding matrix coefficients agree exactly.
\end{remark}

\begin{corollary}\label{cor:toral-extended-equivalence}
The family of isomorphisms
\[
\Phi_{\Sigma,K}:
\mathcal V^{\mathrm{RT}}_{\mathcal C(G_K,q_K)}(\Sigma,\lambda)
\longrightarrow
\mathcal H_{\mathbb T,K}(\Sigma,L_\lambda),
\]
extended to disconnected surfaces by tensor product over connected components, defines a natural monoidal isomorphism between the Reshetikhin--Turaev functor and the toral Chern--Simons functor.
\end{corollary}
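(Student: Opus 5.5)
To prove the corollary, I would check the three properties that make the family $\{\Phi_{\Sigma,K}\}$ a natural monoidal isomorphism of symmetric monoidal functors $\Cob^{\mathrm{ext}}_{2+1}\to\mathrm{Vect}_{\mathbb C}$: each component is invertible, the family is natural with respect to every extended bordism, and it is compatible with the monoidal structure (tensor products and units). These are checked in the order invertibility, monoidality, naturality, since naturality uses the monoidal extension to handle disconnected boundaries.

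\emph{Invertibility.} For a connected extended surface $(\Sigma,\lambda)$ of genus $g$, Theorem~\ref{thm:toral-state-space-identification} shows that $\Phi_{\Sigma,K}$ sends the handlebody basis $\{e_{\mathbf a}\}_{\mathbf a\in G_K^g}$ bijectively onto the Bohr--Sommerfeld basis. Hence it is an isomorphism. For a disconnected surface $\Sigma=\Sigma_1\sqcup\cdots\sqcup\Sigma_r$, I define $\Phi_{\Sigma,K}:=\Phi_{\Sigma_1,K}\otimes\cdots\otimes\Phi_{\Sigma_r,K}$, and tensor products of isomorphisms are isomorphisms. For $\Sigma=\varnothing$, both theories assign $\mathbb C$, and I set $\Phi_{\varnothing,K}=\mathrm{id}_{\mathbb C}$.

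\emph{Monoidality.} Both functors are symmetric monoidal. The RT side assigns $\mathcal V^{\mathrm{RT}}(\Sigma\sqcup\Sigma')\cong\mathcal V^{\mathrm{RT}}(\Sigma)\otimes\mathcal V^{\mathrm{RT}}(\Sigma')$ by Turaev's construction. The CS side does the same because $\mathcal M_{\Sigma\sqcup\Sigma'}(\mathbb T)=\mathcal M_\Sigma(\mathbb T)\times\mathcal M_{\Sigma'}(\mathbb T)$, and the Bohr--Sommerfeld leaves and prequantum lines factor accordingly. Under these identifications, the preferred bases on both sides are tensor products of the componentwise bases. The definition of $\Phi$ therefore gives $\Phi_{\Sigma\sqcup\Sigma',K}=\Phi_{\Sigma,K}\otimes\Phi_{\Sigma',K}$ directly. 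Compatibility with the symmetry isomorphisms holds because both symmetries permute tensor factors and send basis vectors to basis vectors with the same labels.

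\emph{Naturality.} For an arbitrary extended bordism $X:(\Sigma_{\mathrm{in}},\lambda_{\mathrm{in}})\to(\Sigma_{\mathrm{out}},\lambda_{\mathrm{out}})$, the required square is exactly \eqref{eq:toral-boundary-relation} of Theorem~\ref{thm:toral-boundary-equivalence}. The only point to verify is that the theorem covers disconnected boundaries, where $\Phi$ is the tensor-product extension. Its proof already uses the label sets $\Pi_{\mathrm{in}},\Pi_{\mathrm{out}}$ as products over the connected components and closes $X$ with the product handlebodies. So this check amounts to rereading that proof with the tensored bases.

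The step I expect to be the main obstacle is confirming that this naturality is compatible with the extended (Maslov-weighted) composition, and not only with individual bordisms. I would derive this from functoriality rather than check it separately. Both functors compose strictly, by Proposition~\ref{prop:rt-gluing-law-toral} and Theorem~\ref{thm:toral-CS-TQFT}. Given naturality squares for $X_1$ and $X_2$, pasting them yields the square for $X_2\circ X_1$, and both functors assign this square the same value by strict composition, so no independent anomaly computation is needed. Identities are handled the same way: the identity cylinder $\Sigma\times I$ is sent to the identity by both functors, which is consistent with Proposition~\ref{prop:toral-cylinder} once its normalization is absorbed into the extended theory.
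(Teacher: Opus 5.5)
Your proposal is correct and follows the paper's own argument. Naturality is exactly the relation \eqref{eq:toral-boundary-relation} of Theorem~\ref{thm:toral-boundary-equivalence}, whose proof already uses product label sets over connected components, and monoidality holds by the componentwise tensor-product definition of $\Phi_{\Sigma,K}$; the composition step you flag as the main obstacle is not needed, since naturality is checked one morphism at a time and Theorem~\ref{thm:toral-boundary-equivalence} already covers every extended bordism, composites and cylinders included.
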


\begin{proof}
Naturality is exactly the identity
\eqref{eq:toral-boundary-relation}. Monoidality follows from the
definition of \(\Phi_{\Sigma,K}\) on connected surfaces together with its tensor-product
extension to disjoint unions.
\end{proof}

\subsection{Extended equivalence theorem}\label{sec:extended-equivalence}

We now collect the results of the previous subsections into the final equivalence statement. The closed equivalence and the boundary relation already established imply that the toral Chern--Simons theory and the Reshetikhin--Turaev theory are isomorphic as symmetric monoidal extended TQFT functors.

\begin{theorem}\label{thm:toral-functorial-equivalence}
Let \(\mathbb T=\mathfrak t/\Lambda\cong U(1)^n\) be a compact torus, and let $K:\Lambda\times\Lambda\to\mathbb Z$ be an even, integral, nondegenerate symmetric bilinear form. Let
\[
G_K:=\Lambda^*/K\Lambda,
\qquad
q_K([x])=\exp\!\bigl(\pi i\,x^\top K^{-1}x\bigr),
\]
and let \(\mathcal C(G_K,q_K)\) be the associated pointed modular category.

Let
\[
 Z^{\mathrm{RT}}_{\mathcal C(G_K,q_K)}\,\,,
\,\,
Z^{\mathrm{CS}}_{\mathbb T,K}:
\Cob^{\mathrm{ext}}_{2+1}\longrightarrow \mathrm{Vect}_{\mathbb C}
\]
denote, respectively, the Maslov-anomaly-corrected Reshetikhin--Turaev TQFT associated with \(\mathcal C(G_K,q_K)\) and the toral Chern--Simons TQFT at level \(K\), with the extended structures identified via the correspondence
\(\lambda\leftrightarrow L_\lambda\) from Section~\ref{sec:toral-CS}.

Then:

\begin{enumerate}
\item[\textup{(i)}] \textbf{Closed extended equality on canonical closures.}
For every closed extended \(3\)--manifold \(M\) arising as a canonical handlebody
closure in the pairing construction, one has
\[
 Z^{\mathrm{RT}}_{\mathcal C(G_K,q_K)}(M)
=
Z^{\mathrm{CS}}_{\mathbb T,K}(M).
\]
\medskip
\item[\textup{(ii)}] \textbf{Boundary operator equivalence.}
For every extended bordism $X:(\Sigma_{\mathrm{in}},\lambda_{\mathrm{in}})
\to
(\Sigma_{\mathrm{out}},\lambda_{\mathrm{out}}),$ one has
\[
\Phi_{\Sigma_{\mathrm{out}},K}\circ
 Z^{\mathrm{RT}}_{\mathcal C(G_K,q_K)}(X)
=
Z^{\mathrm{CS}}_{\mathbb T,K}(X)\circ
\Phi_{\Sigma_{\mathrm{in}},K},
\]
where
\[
\Phi_{\Sigma,K}:
\mathcal V^{\mathrm{RT}}_{\mathcal C(G_K,q_K)}(\Sigma,\lambda)
\xrightarrow{\ \cong\ }
\mathcal H_{\mathbb T,K}(\Sigma,L_\lambda)
\]
is the canonical boundary identification from
Theorem~\ref{thm:toral-state-space-identification}.
\medskip
\item[\textup{(iii)}] \textbf{Natural monoidal isomorphism of functors.}
The family of isomorphisms \(\Phi_{\Sigma,K}\), extended to disconnected
surfaces by tensor product over connected components, defines a symmetric
monoidal natural isomorphism
\[
\Phi:
 Z^{\mathrm{RT}}_{\mathcal C(G_K,q_K)}
\Longrightarrow
Z^{\mathrm{CS}}_{\mathbb T,K}.
\]
Equivalently, the two extended TQFTs are isomorphic as symmetric monoidal
functors
\[
\Cob^{\mathrm{ext}}_{2+1}\longrightarrow \mathrm{Vect}_{\mathbb C}.
\]
\end{enumerate}
\end{theorem}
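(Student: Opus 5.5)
The theorem assembles results already proved, so I would prove (i), (ii), (iii) in order, each reducing to an earlier statement.

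\textbf{Part (i).} Let $M$ be a canonical closure $M_{\alpha,\beta}$ with extended weight $n(M)$. First, the raw scalars agree: apply Theorem~\ref{thm:toral-closed-equivalence} to the underlying manifold $|M|$, presented by any integral surgery link. Both raw sides equal the common Gauss sum \eqref{eq:toral-CS-RT-common-gausssum}, via Proposition~\ref{prop:toral-CS-normal-form} on the Chern--Simons side and Proposition~\ref{prop:toral-RT-reciprocity-form} on the RT side. Second, the corrections agree. On the RT side the correction is $\kappa(K)^{-n(M)}$, and Lemma~\ref{lem:kappa-K} gives $\kappa(K)^{-n(M)}=e^{\frac{\pi i}{4}\sigma(K)n(M)}$. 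This is exactly the $K$-twisted weight factor in the closed formula of Theorem~\ref{thm:toral-CS-TQFT}. Both sides use the same weight $n(M)$ because Lemma~\ref{lem:walker-weight} computes it purely from $n_X$ and the Wall--Maslov defect. Multiplying the equal raw scalars by equal phases gives (i). One subtlety to record: $\kappa(K)$ is an eighth root of unity and $n(M)$ is only defined modulo $8$, so the phase $\kappa(K)^{-n(M)}$ is well defined. Corollary~\ref{cor:toral-correction-cancels-signature} is consistent with this, since the correction exactly cancels the signature phase produced by reciprocity.

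\textbf{Part (ii).} This is Theorem~\ref{thm:toral-boundary-equivalence}. Its proof pairs basis states against canonical handlebody covectors. It then identifies both matrix coefficients with closed scalars: Proposition~\ref{prop:bordism-matrix-coefficients-toral} on the RT side and the gluing law (Theorem~\ref{thm:toral-gluing}) on the Chern--Simons side. Part (i) then shows these scalars are equal, and nondegeneracy of the pairing upgrades equality of matrix coefficients to equality of operators. I would simply cite that theorem, noting that it uses (i) exactly in the form stated.

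\textbf{Part (iii).} This is the main content and the only point needing care. Naturality with respect to bordisms is (ii). Monoidality holds because $\Phi_{\Sigma,K}$ is defined componentwise by tensor product, and both functors are monoidal: RT by Turaev's construction, and Chern--Simons because the quantization \eqref{eq:toral-hilbert-space} of a disjoint union is the tensor product of the factors. For the symmetric structure, I would check that $\Phi$ intertwines the swap maps. Both theories send the swap diffeomorphism of $\Sigma_1\sqcup\Sigma_2$ to the plain flip of tensor factors, since the Bohr--Sommerfeld labels and the handlebody labels are permuted in the same way.

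The step I expect to be the main obstacle is naturality with respect to morphisms that only change the Lagrangian. These are cylinders $\Sigma\times I$ carrying different Lagrangians at the two ends. On the Chern--Simons side such a cylinder acts by $e^{\frac{\pi i}{4}\sigma(K)n}F_{L_2L_1}$; on the RT side it acts by Turaev's Lagrangian-change operator. Since these cylinders are themselves extended bordisms, (ii) already covers them. What must be verified is that $\Phi$ is well defined independently of the handlebody chosen to realise a given $\lambda$. I would handle this by applying (ii) to mapping cylinders of handlebody-preserving diffeomorphisms. This shows the two choices of basis differ by the same operator on both sides, and the identity-cylinder normalization (Proposition~\ref{prop:toral-cylinder}) matches once the $|\det K|$ powers are absorbed, as in Proposition~\ref{prop:normalization-exponent}.
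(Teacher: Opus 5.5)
Your treatment of (i)--(iii) follows the paper's proof. Part (i) is the corrected-closure identity \eqref{eq:toral-corrected-closure-equality}, which the proof of Theorem~\ref{thm:toral-boundary-equivalence} derives from the raw closed equivalence and the matching weights $\kappa(K)^{-n}=e^{\frac{\pi i}{4}\sigma(K)n}$. Part (ii) is that theorem, and part (iii) is naturality from (ii) together with componentwise monoidality and symmetry.

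Your last paragraph is not part of the paper's proof and is not needed. A natural transformation only needs one component per object, and the paper simply fixes the handlebody presentation adapted to $\lambda$ in Theorem~\ref{thm:toral-state-space-identification}. You should drop that paragraph rather than rely on it, because the sketch does not hold up:
\begin{enumerate}
\item[(a)] Proposition~\ref{prop:toral-cylinder} gives $|\det K|^{\frac14\dim H^1(\Sigma;\mathbb R)}\,\mathrm{Id}$ rather than $\mathrm{Id}$. Proposition~\ref{prop:normalization-exponent} does not remove that factor for a single cylinder, so ``absorbing the $|\det K|$ powers'' is not something the cited results provide.
\item[(b)] If the Chern--Simons basis is indexed intrinsically by the Bohr--Sommerfeld leaves of $L_\lambda$, independence of the handlebody is not even true. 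Already on $T^2$, reparametrizing the solid torus by the meridian twist (which extends over it) reframes the colored core. This replaces $e_a$ by $q_K(a)^{\pm1}e_a$ while leaving the Bohr--Sommerfeld basis unchanged, so the label-matching map $\Phi$ changes.
\end{enumerate}
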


\begin{proof}
Parts \textup{(i)} and \textup{(ii)} are exactly the previously established closed and boundary equivalence theorems, namely Theorem~\ref{thm:toral-closed-equivalence} together with Theorem~\ref{thm:toral-boundary-equivalence}. We therefore prove only \textup{(iii)}.

For every connected extended surface \((\Sigma,\lambda)\),
Theorem~\ref{thm:toral-state-space-identification} gives a canonical isomorphism
\[
\Phi_{\Sigma,K}:
\mathcal V^{\mathrm{RT}}_{\mathcal C(G_K,q_K)}(\Sigma,\lambda)
\xrightarrow{\ \cong\ }
\mathcal H_{\mathbb T,K}(\Sigma,L_\lambda).
\]
If $(\Sigma,\lambda)=\bigsqcup_{i=1}^r(\Sigma_i,\lambda_i)$ is a decomposition into connected components, then the symmetric monoidal
structures of the two theories give canonical identifications
\[
\mathcal V^{\mathrm{RT}}_{\mathcal C(G_K,q_K)}(\Sigma,\lambda)
\cong
\bigotimes_{i=1}^r
\mathcal V^{\mathrm{RT}}_{\mathcal C(G_K,q_K)}(\Sigma_i,\lambda_i),
\]
\[
\mathcal H_{\mathbb T,K}(\Sigma,L_\lambda)
\cong
\bigotimes_{i=1}^r
\mathcal H_{\mathbb T,K}(\Sigma_i,L_{\lambda_i}).
\]
We therefore define
\[
\Phi_{\Sigma,K}:=
\bigotimes_{i=1}^r \Phi_{\Sigma_i,K},
\qquad
\Phi_{\varnothing,K}:=\mathrm{id}_{\mathbb C}.
\]
Each \(\Phi_{\Sigma,K}\) is an isomorphism by construction. 

We now verify naturality. Let $X:(\Sigma_{\mathrm{in}},\lambda_{\mathrm{in}})
\to
(\Sigma_{\mathrm{out}},\lambda_{\mathrm{out}})$ be an extended bordism. By part \textup{(ii)},
\[
\Phi_{\Sigma_{\mathrm{out}},K}\circ
 Z^{\mathrm{RT}}_{\mathcal C(G_K,q_K)}(X)
=
Z^{\mathrm{CS}}_{\mathbb T,K}(X)\circ
\Phi_{\Sigma_{\mathrm{in}},K}.
\]
This is precisely the naturality condition. Hence \(\Phi\) is a natural
isomorphism. It remains to verify monoidality. Let \((\Sigma_1,\lambda_1)\) and
\((\Sigma_2,\lambda_2)\) be extended surfaces. By construction,
\[
\Phi_{\Sigma_1\sqcup \Sigma_2,K}
=
\Phi_{\Sigma_1,K}\otimes \Phi_{\Sigma_2,K},
\qquad
\Phi_{\varnothing,K}=\mathrm{id}_{\mathbb C}.
\]
Thus \(\Phi\) is compatible with the monoidal structures on objects.

For morphisms, let
\[
X_i:(\Sigma_{i,\mathrm{in}},\lambda_{i,\mathrm{in}})
\longrightarrow
(\Sigma_{i,\mathrm{out}},\lambda_{i,\mathrm{out}})
\qquad (i=1,2)
\]
be extended bordisms. Since both theories are symmetric monoidal functors, one has
\[
 Z^{\mathrm{RT}}_{\mathcal C(G_K,q_K)}(X_1\sqcup X_2)
=
 Z^{\mathrm{RT}}_{\mathcal C(G_K,q_K)}(X_1)\otimes
 Z^{\mathrm{RT}}_{\mathcal C(G_K,q_K)}(X_2),
\]
\[
Z^{\mathrm{CS}}_{\mathbb T,K}(X_1\sqcup X_2)
=
Z^{\mathrm{CS}}_{\mathbb T,K}(X_1)\otimes
Z^{\mathrm{CS}}_{\mathbb T,K}(X_2).
\]
Using these identities together with the definition of \(\Phi\) on disjoint unions,
we compute
\[
\Phi_{\Sigma_{1,\mathrm{out}}\sqcup \Sigma_{2,\mathrm{out}},K}\circ
 Z^{\mathrm{RT}}_{\mathcal C(G_K,q_K)}(X_1\sqcup X_2)
\]
\[
=
(\Phi_{\Sigma_{1,\mathrm{out}},K}\otimes \Phi_{\Sigma_{2,\mathrm{out}},K})
\circ
( Z^{\mathrm{RT}}_{\mathcal C(G_K,q_K)}(X_1)\otimes
 Z^{\mathrm{RT}}_{\mathcal C(G_K,q_K)}(X_2))
\]
\[
=
(\Phi_{\Sigma_{1,\mathrm{out}},K}\circ
 Z^{\mathrm{RT}}_{\mathcal C(G_K,q_K)}(X_1))
\otimes
(\Phi_{\Sigma_{2,\mathrm{out}},K}\circ
 Z^{\mathrm{RT}}_{\mathcal C(G_K,q_K)}(X_2))
\]
\[
=
(Z^{\mathrm{CS}}_{\mathbb T,K}(X_1)\circ
\Phi_{\Sigma_{1,\mathrm{in}},K})
\otimes
(Z^{\mathrm{CS}}_{\mathbb T,K}(X_2)\circ
\Phi_{\Sigma_{2,\mathrm{in}},K})
\]
\[
=
(Z^{\mathrm{CS}}_{\mathbb T,K}(X_1)\otimes
Z^{\mathrm{CS}}_{\mathbb T,K}(X_2))
\circ
(\Phi_{\Sigma_{1,\mathrm{in}},K}\otimes
\Phi_{\Sigma_{2,\mathrm{in}},K})
\]
\[
=
Z^{\mathrm{CS}}_{\mathbb T,K}(X_1\sqcup X_2)\circ
\Phi_{\Sigma_{1,\mathrm{in}}\sqcup \Sigma_{2,\mathrm{in}},K}.
\]
Thus \(\Phi\) is a monoidal natural isomorphism. Since the symmetry
isomorphisms in both theories are the canonical permutations of tensor factors
and \(\Phi\) is defined componentwise, \(\Phi\) is compatible with the symmetry
constraints as well. Hence \(\Phi\) is a symmetric monoidal natural isomorphism.
\end{proof}

\begin{corollary}\label{cor:rank--one-CS-RT-equivalence}
Let
\[
\mathbb T=U(1)=\mathbb R/\mathbb Z,
\qquad
\Lambda=\mathbb Z,
\qquad
K=[k],
\]
with \(k\in 2\mathbb Z_{>0}\). Then
\[
G_K=\Lambda^*/K\Lambda\cong \mathbb Z_k,
\qquad
q([x])=\exp\!\left(\frac{\pi i}{k}x^2\right),
\]
the higher-rank equivalence theorem reduces to the rank--one equivalence
\[
Z^{\mathrm{CS}}_{U(1),k}
\;\cong\;
 Z^{\mathrm{RT}}_{\mathcal C(\mathbb Z_k,q)}
:
\Cob^{\mathrm{ext}}_{2+1}\longrightarrow \mathrm{Vect}_{\mathbb C}.
\]
In particular, Theorem~\ref{thm:toral-functorial-equivalence} recovers the
rank--one result of \cite[Theorem~5.12(iii)]{Galviz1}.
\end{corollary}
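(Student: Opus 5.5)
The corollary is a specialization of Theorem~\ref{thm:toral-functorial-equivalence} to rank one. My plan is to check that the hypotheses hold for this data and that every object on both sides reduces to its rank-one counterpart.

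\textbf{Hypotheses.} Take $\Lambda=\mathbb Z$ and $K=[k]$ with $k\in 2\mathbb Z_{>0}$. Then $K(x,x)=kx^2\in 2\mathbb Z$, so $K$ is even and integral, and it is nondegenerate since $\det K=k\neq 0$. Its signature is $\sigma(K)=1$.

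\textbf{Algebraic side.} We have $\Lambda^*=\mathbb Z$, so $G_K=\mathbb Z/k\mathbb Z=\mathbb Z_k$. The formula $q_K([x])=\exp(\pi i\,x K^{-1}x)=\exp(\pi i x^2/k)$ is exactly $q_k$ from Example~\ref{ex:cyclic-general}. The associated bicharacter is $\Omega_K=b_k$. Consequently $\mathcal C(G_K,q_K)=\mathcal C_k$ as pointed modular categories. The Gauss sums $p_\pm(K)$ coincide with $A_\pm(k)$, and Proposition~\ref{prop:RT-closed-surgery-GK} reduces to Proposition~\ref{prop:cyclic-RT-surgery}.

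\textbf{Geometric side.} Here $\mathbb T=\mathbb R/\mathbb Z=U(1)$. The data of Section~\ref{sec:toral-CS} become the rank-one data of \cite{Galviz1}:
\begin{itemize}
\item the symplectic form $\omega_{\Sigma,K}$ is $k$ times the cup-product form;
\item the Bohr--Sommerfeld leaves are indexed by $\mathbb Z_k^g$;
\item the normalization $|\det K|^{m_X}$ becomes $k^{m_X}$;
\item $\Tors H^2(X;\Lambda)=\Tors H^2(X;\mathbb Z)$;
\item the $K$-twisted Maslov weight $e^{\frac{\pi i}{4}\sigma(K)n}$ becomes $e^{\frac{\pi i}{4}n}$, and $\mu_K=\mu_\Sigma$.
\end{itemize}
So $Z^{\mathrm{CS}}_{\mathbb T,K}$ is, term by term, the rank-one extended functor $Z^{\mathrm{CS}}_{U(1),k}$. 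The same holds for the state-space identifications $\Phi_{\Sigma,K}$ of Theorem~\ref{thm:toral-state-space-identification}: they send handlebody core labels in $\mathbb Z_k^g$ to Bohr--Sommerfeld labels, which matches the rank-one identification.

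\textbf{Conclusion.} Applying Theorem~\ref{thm:toral-functorial-equivalence}(iii) gives a symmetric monoidal natural isomorphism $Z^{\mathrm{RT}}_{\mathcal C(\mathbb Z_k,q)}\Rightarrow Z^{\mathrm{CS}}_{U(1),k}$. Its components are the rank-one $\Phi_\Sigma$, so it recovers \cite[Theorem~5.12(iii)]{Galviz1}.

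The main point requiring care is not the calculation but the claim that the toral constructions of \cite{Galviz2}, with $n=1$, reproduce the rank-one constructions of \cite{Galviz1} exactly. This includes the prequantum line, the torsion half-densities, the normalization exponents and the Maslov convention, with no stray constants. I would check this object by object against the cited propositions. The cleanest example is Lemma~\ref{lem:toral-torsion-integral}, which was itself proved by reduction to \cite[Theorem~5.2]{Galviz1}.
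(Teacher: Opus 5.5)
Your proposal is correct and follows the paper's own argument: specialize to $\Lambda=\mathbb Z$, $K=[k]$, observe that $(G_K,q_K)=(\mathbb Z_k,q_k)$ so that $\mathcal C(G_K,q_K)=\mathcal C(\mathbb Z_k,q_k)$ and the toral constructions reduce to the rank-one ones, and then apply Theorem~\ref{thm:toral-functorial-equivalence}(iii). Your extra bookkeeping spells out what the paper summarizes as ``all toral constructions reduce to their rank-one counterparts'': $\sigma(K)=1$, $p_\pm(K)=A_\pm(k)$, $\Omega_K=b_k$, $\mu_K=\mu_\Sigma$ and $|\det K|^{m_X}=k^{m_X}$.
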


\begin{proof}
When \(\mathbb T=U(1)\), one has \(\Lambda=\mathbb Z\) and \(K=[k]\), so the
associated discriminant quadratic module is exactly
\[
(\mathbb Z_k,q_k),
\qquad
q_k([x])=\exp\!\left(\frac{\pi i}{k}x^2\right).
\]
Thus the pointed modular category \(\mathcal C(G_K,q_K)\) becomes
\(\mathcal C(\mathbb Z_k,q_k)\), and all toral constructions reduce to their
rank--one counterparts. Applying Theorem~\ref{thm:toral-functorial-equivalence} in the rank--one case gives the claimed equivalence.
\end{proof}

\bibliographystyle{alpha}
\renewcommand{\refname}{References}
\bibliography{refs}

@article{Witten:1988,
    author = "Witten, Edward",
    editor = "Mitra, Asoke N.",
    title = "{Quantum Field Theory and the {Jones} Polynomial}",
    reportNumber = "IASSNS-HEP-88-33",
    doi = "10.1007/BF01217730",
    journal = "Commun. Math. Phys.",
    volume = "121",
    pages = "351--399",
    year = "1989"
}

@article{Reshetikhin:1991,
    author = "Reshetikhin, N. and Turaev, V. G.",
    title = "{Invariants of three manifolds via link polynomials and quantum groups}",
    doi = "10.1007/BF01239527",
    journal = "Invent. Math.",
    volume = "103",
    pages = "547--597",
    year = "1991"
}

@article{Wall1963,
  author  = {Wall, C. T. C.},
  title   = {Quadratic forms on finite groups, and related topics},
  journal = {Topology},
  volume  = {2},
  number  = {4},
  pages   = {281--298},
  year    = {1963},
  doi     = {10.1016/0040-9383(63)90012-0}
}

@article{Manoliu2,
    author = {Manoliu, Mihaela},
    title = {Abelian {C}hern–{S}imons theory. {I}. {A} topological quantum field theory},
    journal = {Journal of Mathematical Physics},
    volume = {39},
    number = {1},
    pages = {170-206},
    year = {1998},
    month = {01},
    issn = {0022-2488},
    doi = {10.1063/1.532333},
    url = {https://doi.org/10.1063/1.532333},
    eprint = {https://pubs.aip.org/aip/jmp/article-pdf/39/1/170/19236848/170_1_online.pdf},
}

@article{deloup2005,
  title={On reciprocity},
  author={Deloup, Florian and Turaev, Vladimir},
  journal={Journal of Pure and Applied Algebra},
  volume={208},
  number={1},
  pages={153--158},
  year={2007},
  publisher={Elsevier}
}

@incollection{Mattes,
  author    = {Mattes, Josef and Polyak, Michael and Reshetikhin, Nikolai},
  title     = {On invariants of {$3$}-manifolds derived from {A}belian groups},
  booktitle = {Quantum Topology},
  editor    = {Kauffman, Louis H. and Baadhio, Randy A.},
  series    = {Series on Knots and Everything},
  volume    = {3},
  pages     = {324--338},
  year      = {1993},
  publisher = {World Scientific Publishing Co. Inc.},
  address   = {River Edge, NJ},
  doi       = {10.1142/9789812796387_0018}
}

@unpublished{Walker,
  author = {Walker, Kevin},
  title  = {On {W}itten's 3-Manifold Invariants},
  note   = {Preprint},
  year   = {1991}
}

@book{Turaev1994,
  author = {Turaev, Vladimir G.},
  title = {Quantum Invariants of Knots and 3-Manifolds},
  series = {De Gruyter Studies in Mathematics},
  volume = {18},
  publisher = {Walter de Gruyter},
  year = {1994},
  doi = {10.1515/9783110857182}
}

@unpublished{Galviz1,
  author = {Galviz, Daniel},
  title = { Equivalence of Extended ${U}(1)$ {C}hern--{S}imons and {R}eshetikhin--{T}uraev {TQFT}s},
  note = {arXiv:2603.27688},
  year = {2026}
}

@unpublished{Galviz2,
  author = {Galviz, Daniel},
  title = {Toral {C}hern--{S}imons {TQFT} via Geometric Quantization in Real Polarization},
  note = {arXiv:2604.01016},
  year = {2026}
}

@unpublished{Galviz2.5,
  author = {Galviz, Daniel},
  title = {A RIGOROUS FUNCTIONAL–INTEGRAL CONSTRUCTION OF TORAL {C}HERN–{S}IMONS THEORY},
  note = {arXiv:2604.02013},
  year = {2026}
}

@book{gompf1999,
  author    = {Robert E. Gompf and Andr{\'a}s I. Stipsicz},
  title     = {4-Manifolds and Kirby Calculus},
  series    = {Graduate Studies in Mathematics},
  volume    = {20},
  year      = {1999},
  publisher = {American Mathematical Society},
  address   = {Providence, RI},
  isbn      = {0-8218-0994-6}
}

@book{milnor2013,
  title={Symmetric Bilinear Forms},
  author={Milnor, J. and Husemoller, D.},
  isbn={9783642883309},
  lccn={72090190},
  series={Ergebnisse der Mathematik und ihrer Grenzgebiete. 2. Folge},
  url={https://books.google.com/books?id=vGPyCAAAQBAJ},
  year={2013},
  publisher={Springer Berlin Heidelberg}
}
\end{document}